\documentclass[a4paper,smallextended,numbook,runningheads]{svjour3}

\usepackage[utf8]{inputenc}

\usepackage{latexsym}
\usepackage{amsmath}
\usepackage{amssymb}

\usepackage[ruled,vlined,linesnumbered]{algorithm2e}

\usepackage{graphicx}
\usepackage{caption,subcaption}
\usepackage{geometry}
\usepackage{float}

\usepackage{color}
\newcommand{\fm}[1]{\textcolor{black}{#1}}
\newcommand{\we}[1]{\textcolor{black}{#1}}
\newcommand{\wee}[1]{\textcolor{black}{#1}}

\newcommand{\Int}{\scalebox{1.2}{\ensuremath{{\cal I}}}}

\usepackage{hyperref}

\title{On Kosloff Tal-Ezer Least-Squares Quadrature Formulas }
\author{G. Cappellazzo  \and
        W. Erb     \and
        F. Marchetti \and \\ 
        D. Poggiali
}


\institute{Giacomo Cappellazzo \at
              Dipartimento di Matematica \lq\lq Tullio Levi-Civita\rq\rq, Universit\`a di Padova, Italy \\
              \email{giacomo.cappellazzo@studenti.unipd.it}    
            \and
            Wolfgang Erb \at
            Dipartimento di Matematica \lq\lq Tullio Levi-Civita\rq\rq, Universit\`a di Padova, Italy \\
              \email{wolfgang.erb@unipd.it}
             \and
             Francesco Marchetti \at
              Dipartimento di Matematica \lq\lq Tullio Levi-Civita\rq\rq, Universit\`a di Padova, Italy \\
              \email{francesco.marchetti@unipd.it} 
            \and 
           Davide Poggiali  \at
          PNC - Padova Neuroscience Center, Universit\`a di Padova, Italy \\
           \email{poggiali.davide@gmail.com}   
}

\date{Revision of the manuscript: 2. November 2022}


\journalname{BIT}

\begin{document}

\maketitle

\begin{abstract}

In this work, we study a global quadrature scheme for analytic functions on compact intervals based on function values on \we{quasi-uniform} grids of quadrature nodes. In practice it is not always possible to sample functions at optimal nodes \wee{that give well-conditioned and quickly converging interpolatory quadrature rules at the same time}. Therefore, we go beyond classical interpolatory quadrature by lowering the degree of the polynomial approximant and by applying auxiliary mapping functions that map the original quadrature nodes to more suitable fake nodes. More precisely, we investigate the combination of the Kosloff Tal-Ezer map and least-squares approximation (KTL) for numerical quadrature: a careful selection of the mapping parameter ensures \wee{stability of the scheme,} a high accuracy of the approximation and, at the same time, an asymptotically optimal ratio between the degree of the polynomial and the spacing of the grid. We will investigate the properties of this KTL quadrature and focus on the symmetry of the quadrature weights, \we{the limit relations for the mapping parameter}, as well as the computation of the quadrature weights in the standard monomial and in the Chebyshev bases with help of a cosine transform. Numerical tests on equispaced nodes show that a static choice of the map's parameter improve the results of the composite trapezoidal rule, while a dynamic 
approach achieves larger stability and faster convergence, even when the sampling nodes are perturbed. From a computational point of view the proposed method is practical and can be implemented in a simple and efficient way.

\keywords{numerical quadrature, mapped \wee{polynomial} bases, fake nodes approach, Kosloff Tal-Ezer map, least-squares quadrature}
\subclass{ 65D32 \and  41A55 \and 41A10}
\end{abstract}

\section{Introduction}

A classical problem in numerical analysis is the numerical approximation of the integral 
\begin{equation*} 
\Int(f,\Omega) := \int_{\Omega} f(x) {\rm d} x, \quad \Omega=[a,b]\subset \mathbb{R},
\end{equation*} 
from discrete function samples $\boldsymbol{f}=(f_0,\dots,f_m)^{\top},\; f_i=f(x_i),$ on a set of distinct quadrature nodes $\mathcal{X}=\{x_i, \; i=0,\ldots,m\}\subset \Omega$. If the nodes $\mathcal{X}$ can be chosen freely in the interval $\Omega$, the interpolatory Gauss quadrature formulas or Clenshaw-Curtis quadrature formulas are the standard choices to approximate the integral $\Int(f,\Omega)$ if the function $f$ is smooth. In spectral methods, the classical choices for quadrature and interpolation nodes in the reference interval $\Omega = I = [-1,1]$ are the Chebyshev and Chebyshev-Lobatto nodes given as
\begin{equation*}
    \mathcal{C}_{m+1} := \left\{ -\cos \left( \frac{(2i+1) \pi}{2m+2}\right) , \; i=0,\ldots,m \right\} \quad \text{and} \quad     \mathcal{U}_{m+1} := \left\{ -\cos \left( \frac{i \pi}{m}\right) , \; i=0,\ldots,m \right\}.
\end{equation*}
\wee{These nodes display exceptionally suitable properties for the conditioning and the convergence of quadrature rules. For instance, for the Clenshaw-Curtis rule based on the Chebyshev-Lobatto nodes $\mathcal{U}_{m+1}$, the quadrature weights are all positive \cite{imhof63} and the respective quadrature rule well-conditioned. Furthermore, as soon as the underlying function is analytic on $\Omega$ and analytically continuable on an open Bernstein ellipse around $\Omega$, this quadrature formula will convergence geometrically towards the integral $\Int(f,\Omega)$ \cite[Chapter 19]{trefappr}.}

However, in practice it is not always possible to sample the function $f$ on arbitrary positions of the interval $\Omega$ or the knowledge of the function values might be restricted to some a priori fixed node sets $\mathcal{X}$. In these cases, the efficient quadrature schemes based on, for instance, the Chebyshev nodes $\mathcal{C}_{m+1}$, $\mathcal{U}_{m+1}$ or the Gauss-Legendre nodes are not directly accessible.
Moreover, if only \we{quasi-equispaced} nodes are available, then a high-order interpolatory quadrature scheme gets highly ill-conditioned. 
\wee{This ill-conditioning is linked to a highly oscillatory behavior of the quadrature weights and can be described formally by the rapidly increasing sums of the moduli of the quadrature weights. In particular, it can be shown that for interpolatory quadrature formulas on equidistant nodes (the Newton-Cotes formulas) this sum grows exponentially with order at least $\mathcal{O}(\frac{2^{m+1}}{(m+1)^3})$ \cite[Theorem 5.2.1]{brasspetras2011}. For the respective polynomial interpolation problem on equidistant nodes a similar exponential ill-conditioning is known, and usually referred to as Runge phemonon \cite{Runge,Turetskii}.}  

If resampling of the function $f$ is not practicable, stable  \we{and well-conditioned} alternatives to an interpolatory formula have to be found. A classical solution in this regard is the usage of composite quadrature schemes in which the node set $\mathcal{X}$ is subdivided into smaller portions endowed with a local interpolatory formula. If the set $\mathcal{X}$ is equidistant, this approach leads to the well-known composite Newton-Cotes formulas. For smooth functions $f$, a disadvantage of such a composite scheme \we{is the limitation in the achieved} convergence rates compared to geometric convergence rates that are possible for interpolatory schemes (if the node set $\mathcal{X}$ is the right one). 

\wee{A further standard approach to stabilize a quadrature rule on a fixed node set $\mathcal{X}$ is given by least-squares quadrature formulas \cite{Glaubitz20,Huybrechs09,Migliorati22,Wilson70}. In this case, the integral of a polynomial least-squares solution is used to define the quadrature rule. The reduced degree $n < m$ of the polynomial space in the least-squares approach leads to a better conditioning of the respective formulas. For equidistant nodes it is shown in \cite{Wilson70b} that the grid size $m$ has to scale as $n^2$ in terms of the polynomial degree $n$ in order to obtain positive least-squares quadrature weights. Thus, although the least-squares approach leads to stable quadrature rules, the required quadratic scaling between $m$ and $n$ is a considerable drawback of polynomial least-squares formulas if simultaneously a high convergence rate for smooth functions should be achieved. }
\we{When leaving the polynomial setting, other well-established solutions exist in the literature. One prominent approach which performs very well especially at equispaced nodes is given by quadrature formulas based on rational interpolation,  \cite{Bos12,Floater07,Hormann08,Hormann16}.}

Here, we will focus on an alternative idea based on a mapping function $S$ that maps a set of quasi-uniform \wee{quadrature nodes $\mathcal{X}$ to a node set $S(\mathcal{X})$ in which the nodes are shifted towards the boundary of $\Omega$. These shifted nodes display an improved behavior in terms of conditioning and convergence of the respective quadrature formulas. When combining the map $S$ with classical interpolatory or least-squares quadrature formulas on the mapped nodes $S(\mathcal{X})$, the resulting quadrature formulas on the orginal set $\mathcal{X}$ are non-polynomial, can however be interpreted as polynomial quadrature formulas on the mapped nodes $S(\mathcal{X})$}. This idea has been investigated thoroughly for interpolation problems in \cite{platte} in terms of resulting mapped basis functions and in \cite{FakeNodes} in which the nodes $S(\mathcal{X})$ have been referred to as \lq\lq fake nodes". In this global approach, which has been also investigated in the contexts of barycentric rational approximation \cite{BDEM} and multivariate approximation \cite{FakeNodesMulti}, no resampling of the function $f$ is necessary as the given sampling values are directly used on the new nodes $S(\mathcal{X})$. Furthermore, \wee{for the interpolation problem the well-conditioning can be ensured by an inheritance property of the Lebesgue constant for the mapped nodes $S(\mathcal{X})$ (cf. \cite[Proposition 3.4]{FakeNodesMulti}).}  

For the domain $\Omega = I = [-1,1]$, a prominent example of such a mapping function is the \textit{Kosloff Tal-Ezer} (KT) map $M_{\alpha}: I \to I$ given by (cf. \cite{Kosloff})
\begin{equation}\label{eq:kte_map}
	M_{\alpha}(x) := \frac{\sin(\alpha \pi x/2)}{\sin(\alpha \pi/2)}, \qquad x \in I, \quad \text{for} \; 0 < \alpha \leq 1,
\end{equation} 
and $M_{0}(x) := \lim_{\alpha \to 0^{+}} M_{\alpha}(x) = x$ for $\alpha = 0$. While $M_0$ is the identity map on $I$, the KT function $M_{\alpha}$ with $\alpha =1$ maps the open and closed equidistant Newton-Cotes quadrature nodes to the Chebyshev $\mathcal{C}_{m+1}$ and Chebyshev-Lobatto nodes $\mathcal{U}_{m+1}$. \wee{Similarly, if $\mathcal{X}$ is a quasi-uniform grid in the interval $I$, the distribution of the mapped nodes $M_{\alpha}(\mathcal{X})$ with the parameter $\alpha$ close to $1$ clusters towards the ends of the interval $I$ which improves the conditioning of corresponding interpolation and least-squares procedures}. The properties of \wee{the KT map for the conditioning and the accuracy of the weighted} least-squares approximation of a function $f$ have been thoroughly studied in \cite{platte}. In the literature also other maps have been studied. In \cite{tref}, for instance, conformal maps have been applied for the acceleration of Gauss-type quadrature schemes. Approximation methods and numerical quadrature through mapped nodes have also been used extensively in the context of spectral methods for PDEs; see \cite[Chapter 16]{boyd} for a large overview.

The goal of this work is to show that the KT map in combination with the least-squares approximant introduced in \cite{platte} leads to quickly converging quadrature formulas for analytic functions starting from function values on \we{a quasi-uniform} grid $\mathcal{X}$. Our starting point is the general setting for interpolatory quadrature formulas described in \cite{FakeQuadrature}, then we will move on to the following main setting:
\begin{enumerate}
    \item 
    We will give up the interpolatory conditions used in \cite{FakeQuadrature} and consider more general types of quadrature formulas based on least-squares approximation. We will see that this transition leads to a faster convergence of the quadrature formulas.
    \item
    As underlying mapping we will consider the KT map \eqref{eq:kte_map} with a general parameter $\alpha$ in $[0,1]$. We will study the role of the parameter $\alpha$ \we{in the convergence} of the quadrature formulas. In doing so, the theoretical investigation carried out in \cite{platte} plays a central role.
\end{enumerate}

{\bfseries \noindent Main results}

\begin{itemize}
    \item 
    In addition to a result given in \cite{FakeQuadrature} we show how the composite midpoint rule and the composite Cavalieri-Simpson formula are related to a mapped interpolatory quadrature formula.
    \item
    We introduce and analyse the Kosloff Tal-Ezer map as stabilizing component of interpolatory and least-squares quadrature formulas (referred to as KTI and KTL formulas) for \we{quasi-uniform} node sets. A careful selection of the mapping parameter $\alpha$ ensures on one hand \wee{a high} accuracy of the approximation and on the other hand an asymptotically optimal ratio between the degree of the polynomial \wee{approximation} and the spacing of the grid. 
    \item
    We study the symmetry of the KTI and KTL quadrature weights, limit relations for $\alpha$ converging to $0^{+}$ and $1^{-}$, as well as the computation of the quadrature weights in the standard monomial and the Chebyshev basis with help of a cosine transform.
\end{itemize}

\vspace{1mm}


{\bfseries \noindent Organization of this paper}\vspace{5pt}

In {\bfseries Section \ref{sec:sezione_fake_quadrature}}, we review interpolatory quadrature formulas combined with a mapping of the quadrature nodes. We shortly recapitulate how these formulas can be computed and we provide three examples of mapped interpolatory formulas.

In {\bfseries Section \ref{sec:KTLquadrature}} and {\bfseries Section \ref{sec:KTLproperties}}, we introduce and study the KTI and KTL quadrature formulas, and we investigate analytic and numerical properties of the corresponding quadrature schemes.

In the last {\bfseries Section \ref{sec:numericalexperiments}}, we conduct a series of numerical experiments to compare different parameter choices and the efficiency of the method with regard to other quadrature rules. Our experiences with the KTL quadrature scheme show that the formulas are practical, simple and can be implemented efficiently.

\section{Interpolatory quadrature formulas based on mapped nodes and mapped basis functions}\label{sec:sezione_fake_quadrature}

We start this work with some preliminary facts about interpolatory quadrature formulas and the respective adaptions if an additional mapping is involved.  

Let $\mathcal{X} = \{x_0, \ldots, x_m\}$ be an increasingly ordered set of quadrature nodes in the interval $\Omega = [a,b]$ and $f$ a continuous function on $\Omega$. An interpolatory quadrature formula $\Int_{m,\mathcal{X}}(f,\Omega)$ is built upon the unique polynomial $P_{m} (f)$ of degree $m$ interpolating $f$ at the nodes $\mathcal{X}$. This interpolant can be written in terms of the monomial basis $\{ 1,x,...,x^m\}$ as
\begin{equation*}
    P_{m}(f)(x) = \sum_{i=0}^{m} \gamma_{i} x^i, \quad x \in \Omega,
\end{equation*}
where the coefficients $\gamma_{0},\ldots,\gamma_{m}$ are determined by the $m + 1$ interpolatory conditions $P_{m}(f)(x_i) = f_i = f(x_i)$. Alternatively, $P_{m} (f)$ can be expressed in terms of the Lagrange basis $\{ \ell_{0},...,\ell_{m}\}$ as
\begin{equation*}
    P_{m}(f)(x) = \sum_{i=0}^{n} f_{i}\ell_{i}(x), \quad x \in \Omega,
\end{equation*}
where the Lagrange polynomials are given as
\begin{equation}\label{lagrange_pol_def}
    \ell_{i}(x) = \prod_{\substack{0\leqslant j \leqslant m \\ j\neq i}}^{} \frac{x-x_{j}}{x_{i}-x_{j}}.
\end{equation}
With the vector $\boldsymbol{w}=(w_0,\ldots,w_m)^{\top}$ consisting of the interpolatory quadrature weights $w_i=\Int(\ell_i,\Omega),\;i=0,\ldots,m\:$, the interpolatory quadrature formula $\Int_{m,\mathcal{X}}(f,\Omega)$ is given as
\begin{equation}\label{eq:quadrature_formula}
    \Int_{m,\mathcal{X}}(f,\Omega) := \Int(P_{m,\mathcal{X}}(f),\Omega) = \boldsymbol{w}^{\top} \boldsymbol{f} \approx \Int(f,\Omega).
\end{equation}

Going one step further, we consider an additional injective map $S:\Omega\longrightarrow\mathbb{R}$ included in the interpolation process. The idea of the so-called fake nodes approach (FNA) introduced in \cite{FakeNodes} is to obtain an interpolant on the nodes $\mathcal{X}$ by constructing a polynomial interpolant on the fake nodes $S(\mathcal{X})$. More precisely, if $P_{m,S(\mathcal{X})} (g)$ denotes the unique polynomial interpolant of the function $g = f \circ S^{-1}$ on the nodes $S(\mathcal{X})$, the interpolant of $f$ on $\mathcal{X}$ is defined as
\begin{equation*}
R^S_{m,\mathcal{X}}(f)(x) := P_{m,S(\mathcal{X})}(g)(S(x)),
\end{equation*}
The interpolant $R^S_{m,\mathcal{X}}(f)$ can be expressed in terms of the mapped Lagrange basis $\{ \lambda^S_{0},...,\lambda^S_{m}\}$, i.e.,
\begin{equation*}
    R^S_{m,\mathcal{X}}(f)(x) =\sum_{i=0}^{m} f_i\lambda_{i}^{S}(x),
\end{equation*}
where $\lambda^S_i:= \ell^{S}_i\circ S$ and $\ell^{S}_i$ is the $i$-th Lagrange polynomial on the node set $S(\mathcal{X})$. Then, similarly to \eqref{eq:quadrature_formula}, we obtain the interpolatory quadrature formula
\begin{equation}\label{eq:quad_fake}
    \Int^S_{m,\mathcal{X}}(f,\Omega):= \Int(R^S_{m,\mathcal{X}}(f),\Omega) = (\boldsymbol{w}^S)^{\top} \boldsymbol{f} \approx \Int(f,\Omega),
\end{equation}
where $\boldsymbol{w}^S=(w_0^S,\ldots,w_m^S)^{\top}$ are the quadrature weights determined by the conditions
\begin{equation*}
    w_i^S=\Int(\lambda_i^S,\Omega),\quad i=0,\dots,m,
\end{equation*}
that hold true for every interpolatory quadrature formula. We point out that the vector $\boldsymbol{w}^S$ can be computed by solving the linear system
\begin{equation}\label{eq:sistema_mono}
    (\mathbf{A}^{S})^{\top}\boldsymbol{w}^S=\boldsymbol{\tau}^S,
\end{equation}
where $\mathbf{A}^{S}_{ij} := S(x_i)^j$,  $i,j=0,\ldots,m$, is the well-known Vandermonde matrix for the set $S(\mathcal{X})$, and $\boldsymbol{\tau}^S=(\tau_0^S,\ldots,\tau_m^S)^{\top}$ is the vector of moments related to the basis $\{1, S(x), \ldots, S(x)^m\}$, i.e., 
\begin{equation*}
    \tau_i^S=\Int(S^i,\Omega),\quad i=0,\dots,m\:.
\end{equation*}
If the map $S$ is at least $C^1(\Omega)$, we define for $y=S(x)$ the function 
\begin{equation} \label{eq:eqcb}
\tilde{S}(y) :=  \dfrac{{\rm d} S^{-1}(y)}{{\rm d} y} = \dfrac{1}{S'(S^{-1}(y))}\:.
\end{equation}
\we{Assuming $S$ is injective}, the inverse $S^{-1}$ is well-defined on $S(\Omega)$, and we obtain
\begin{equation} \label{eq:eq_q}
{\cal I }(R^S_{m,\mathcal{X}}(f),\Omega) = \Int\left( \sum_{i=0}^m f_i \,(\ell^{S}_i\circ S),\Omega \right) =  \Int\left( \sum_{i=0}^m f_i \ell^{S}_i\cdot\tilde{S},S(\Omega) \right) =  {\cal I }(P_{m,S(\mathcal{X})}(g)\cdot\tilde{S},S(\Omega)),
\end{equation}
which leads to the formula
\begin{equation}\label{eq:lagrange}
    w^S_i = {\cal I }(\lambda_i^S,\Omega)= {\cal I }(\ell^{S}_i\cdot\tilde{S},S(\Omega)).
\end{equation}
\fm{We point out that the smoothness of the map $S$ (and $S^{-1}$) is relevant for our objective, since otherwise the regularity of the mapped function $g$ gets affected compared to the original underlying function $f$. Furthermore, we note that we use the expression \eqref{eq:sistema_mono} for theoretical purposes only, since computing the quadrature weights by means of such a formula is unstable due to the usage of the monomial basis. For this, we will later on consider a more stable basis defined upon the Chebyshev polynomials of the first kind. }

\begin{example}[Composite midpoint rule]
\label{form_mid_teo}
Let $\mathcal{X} = \{ x_{i} = a+(i+\frac{1}{2})\frac{b-a}{m+1}, \; i=0,\ldots,m\}$ and consider the bijective map
\begin{equation}\label{eq:form_trap_teo}
    S : \Omega \longrightarrow I = [-1,1],\; x \longmapsto -\cos \left( \frac{x-a}{b-a} \pi \right).
\end{equation}
Then, $S(\mathcal{X})$ corresponds to the Chebyshev nodes $C_{m+1}$ on $[-1,1]$ and the quadrature weights related to the interpolatory quadrature formula \eqref{eq:quad_fake} are given by
$$
w_{i}^{S}= \frac{b-a}{m+1},\quad i=0,\ldots,m.
$$
\end{example}
\begin{proof}
A straightforward check shows that $S(\mathcal{X})=\mathcal{C}_{m+1}$ are the Chebyshev nodes on $[-1,1]$. Then, if $\nu: I \longrightarrow \mathbb{R}$ is the Chebyshev weight function, i.e.,
$$
\nu(x)=\frac{1}{\sqrt{1-x^{2}}},
$$
and
$$
S^{-1}(y)=\arccos(-y)\frac{b-a}{\pi}+a,
$$
we get
$$
\tilde{S}(y) = \dfrac{1}{S'(S^{-1}(y))} = \frac{1}{\sin\left( \frac{S^{-1}(y)-a}{b-a} \pi \right)\frac{\pi}{b-a}} = \frac{b-a}{\pi} \frac{1}{\sin(\arccos(-y))} = \frac{b-a}{\pi} \nu(y).
$$
From \eqref{eq:lagrange}, we obtain
$$
w_{i}^{S} = \frac{b-a}{\pi} \int_{-1}^{1} \ell^{S}_{i}(y)\nu(y) \mathrm{d}y,
$$
where $\ell^{S}_{i}, i=0,\ldots ,m$, are the Lagrange polynomials computed at the Chebyshev nodes $\mathcal{C}_{m+1}$.\\
Using the classical quadrature formula at the Chebyshev nodes \cite{Chebyshev,Mason02}
$$
\int_{-1}^{1} f(x) \nu (x) \mathrm{d}x \approx \sum_{i=0}^{m} f(S(x_{i}))\frac{\pi}{m+1},
$$
which has degree of accuracy at least $m$, we obtain the weights
$$
w_{i}^{S} = \frac{b-a}{\pi} \int_{-1}^{1} \ell^{S}_{i}(y)\nu(y) \mathrm{d}y = \frac{b-a}{\pi} \sum_{j=0}^{m} \ell^{S}_{i}(S(x_{j}))\frac{\pi}{m+1} = \frac{b-a}{\pi} \sum_{j=0}^{m} \delta_{i,j}\frac{\pi}{m+1} = \frac{b-a}{m+1}.
$$
\wee{We note that, since the degree of the Lagrange polynomial $\ell^{S}_{i}$ is $m$, the second equality holds due to the exactness of an interpolatory quadrature rule. }
\end{proof}

\we{In addition to Example \ref{form_mid_teo}, we add the following two quadrature rules based on the cosine map in \eqref{eq:form_trap_teo}.}

\begin{example}[Composite trapezoidal rule, {\cite[Theorem 3.1]{FakeQuadrature}}] 
\label{form_trap_teo}
Let $\mathcal{X} = \{ x_{i} = a+i \frac{b-a}{m}, \; i=0,\ldots,m\}$ be a set of $m+1$ equidistant nodes in the interval $[a,b]$ containing the two borders $a$ and $b$, and the map $S$ be given as in \eqref{eq:form_trap_teo}.
Then the mapped nodes $S(\mathcal{X})$, correspond to the Chebyshev-Lobatto nodes $\mathcal{U}_{m+1}$ on the interval $[-1,1]$ and the respective quadrature weights become
\begin{equation}\label{eq:fake_trapezi_pesi}
    w_{i}^{S}=
\begin{cases}
	\frac{b-a}{2m}, & \textrm{for } i \in \{0,m\}, \\
	\frac{b-a}{m}, & \textrm{for } i \in \{1,...,m-1\},
\end{cases}
\end{equation}
i.e., we obtain the composite trapezoidal rule based on $m$ subintervals of $[a,b]$. We point out that \eqref{eq:form_trap_teo} corresponds to $(M_1\circ H)$, where $H(x) = 2 \cdot \frac{(x-a)}{(b-a)}-1$ and $M_1$ is the Kosloff Tal-Ezer map introduced in \eqref{eq:kte_map}.
\end{example}

\begin{example}[Composite Cavalieri-Simpson formula]
Let $\mathcal{X} = \{ x_{i} = a+i \frac{b-a}{2m}, \; i=0,...,2m\}$, and $\mathcal{X}= \mathcal{X}^{\mathrm{e}} \cup \mathcal{X}^{\mathrm{o}}$ be the disjoint subdivision of $\mathcal{X}$ into the nodes with even and odd indices. Further let the map $S$ be given as in \eqref{eq:form_trap_teo}.\\
Then, the composite Cavalieri-Simpson formula at $\mathcal{X}$ is a convex combination of the quadrature scheme in Example \ref{form_mid_teo} applied to \we{$\mathcal{X}^{\mathrm{o}}$} and of the quadrature rule in Example \ref{form_trap_teo} applied to \we{$\mathcal{X}^{\mathrm{e}}$}.
\end{example}
\begin{proof}
First, we split ${\cal I }(f,\Omega)$ into the following convex combination
$$
{\cal I }(f,\Omega) = \frac{2}{3}{\cal I }(f,\Omega)+\frac{1}{3}{\cal I }(f,\Omega).
$$
Then, we apply the quadrature rule of \we{Example  \ref{form_mid_teo}} to the first integral (using the odd nodes $\mathcal{X}^{\mathrm{o}}$), and the scheme of \we{Example  \ref{form_trap_teo}} to the second (using the even nodes $\mathcal{X}^{\mathrm{e}}$), thus achieving
\begin{equation*}
	\begin{split}
		\int_{a}^{b} f(x) \mathrm{d}x & \approx \frac{2}{3} \frac{b-a}{m} \left(\sum_{i=1}^{m} f(x_{2i-1})\right) + \frac{1}{3} \frac{b-a}{m} \left( \frac{f(x_{0})}{2} + \sum_{i=1}^{m-1} f(x_{2i}) + \frac{f(x_{2m})}{2} \right) = \\
		& = \frac{b-a}{6m} \left( f(x_{0}) + 2\sum_{i=1}^{m-1} f(x_{2i}) + 4\sum_{i=1}^{m} f(x_{2i-1}) + f(x_{2m}) \right),
	\end{split}
\end{equation*}
which is the composite Cavalieri-Simpson formula on the nodes $\mathcal{X}$.
\end{proof}

\section{Kosloff Tal-Ezer Least-squares (KTL) quadrature} \label{sec:KTLquadrature}

For simplicity, we restrict the integration domain $\Omega$ to the interval $\Omega=I=[-1,1]$. In the previous Section \ref{sec:sezione_fake_quadrature}, we considered several examples of well-known composite Newton-Cotes schemes that can be interpreted as mapped Gauss-Chebyshev type formulas in which equidistant nodes are mapped onto Chebyshev or Chebyshev-Lobatto points. This particular mapping can be considered as a special case of the Kosloff Tal-Ezer (KT) map $M_{\alpha}$ with $\alpha=1$ as introduced in \eqref{eq:kte_map}.\\

In the following, we give a brief overview on KT-generated mapped polynomial methods as developed and studied in \cite{platte}. First, we observe that the KT maps given by $M_{\alpha}(x) = \frac{\sin(\alpha \pi x/2)}{\sin(\alpha \pi/2)}$ for $\alpha \in (0,1]$  and $M_0(x) = x$ for $\alpha = 0$, are strictly increasing functions on $I$. In particular, the maps $M_{\alpha}(x)$ are bijections of $I$ into itself with the inverse mappings
\begin{equation}
    M_{\alpha}^{-1}(y) = \frac{2}{\alpha \pi}\arcsin(\sin(\alpha \pi/2)y), \quad \alpha \in (0,1], \quad \text{and} \quad M_{0}^{-1}(y) = y.
\end{equation}
Further, the derivative of $M_{\alpha}$ is given by
\begin{equation}
    M_{\alpha}'(x)=\frac{\alpha \pi \cos(\alpha \pi x/2)}{2\sin(\alpha \pi/2)} = \frac{\alpha \pi}{2 \sin(\alpha \pi/2)}\sqrt{1- \sin(\alpha \pi/2)^2 M_{\alpha}(x)^{2}}, \quad \text{and} \quad M_0'(x) = 1.
\end{equation}

If $\mathbb{P}_n$ denotes the space of polynomials of degree at most $n$, we can associate to $M_{\alpha}$ the approximation space \wee{of mapped polynomials}
\begin{equation}\label{pnalpha_def}
	\mathbb{P}_{n}^{\alpha} = \{ P \circ M_{\alpha}\: :\:P \in \mathbb{P}_{n}\}.
\end{equation}
Note that $\mathbb{P}_{n}^{0}=\mathbb{P}_{n}$, while $\mathbb{P}_{n}^{1}$ is a space of functions closely related to trigonometric polynomials (cf.  \cite{platte}).

If $\alpha < 1$, it is shown in \cite{platte} that polynomial interpolation on the nodes $M_{\alpha}(\mathcal{X})$ \wee{is still} ill-conditioned if $\mathcal{X}$ is, for instance, a set of equidistant nodes in $I$. In this case, the set $\mathcal{X}$ is not mapped onto the Chebyshev or Chebyshev-Lobatto nodes, and the polynomial interpolants display Runge type artifacts. To overcome this issue, the size of the node set $\mathcal{X}$ in \cite{platte} was \wee{increased compared to the dimension of the polynomial space} such that $m>n$, and the following weighted least-squares approximant to the function $f$ was introduced:
\begin{equation}\label{fnza_def}
	F_{n,\mathcal{X}}^{\alpha}(f) := \min_{P^{\alpha} \in \mathbb{P}_{n}^{\alpha}} \sum_{i=0}^{m} \mu_{i}|f(x_i)-P^{\alpha}(x_i)|^{2},\quad x_i \in \mathcal{X},
\end{equation}
with the weights $\mu_i$ given by
\begin{equation}\label{mu_def_1}
\mu_{i} = \frac{1}{2} \int_{M_{\alpha}(x_{i-1})}^{M_{\alpha}(x_{i+1})} \frac{1}{\sqrt{1-x^2}}\mathrm{d}x = \frac{1}{2} (\arcsin(M_{\alpha}(x_{i+1}))-\arcsin(M_{\alpha}(x_{i-1}))) , \quad i=0, \ldots ,m,
\end{equation}
where $x_{-1} = -1$ and $x_{m+1} = 1$ denote the borders of the interval $I$. \wee{For $\alpha =1$, the weights $\mu_{i}, \; i=0, \ldots ,m$, correspond to the composite trapezoidal quadrature weights with respect to the the node set $\mathcal{X}$. The usage of the special weights $\mu_i$ is motivated by the fact that under some conditions on the involved parameters an upper bound for the conditioning of the least-squares approximation \eqref{fnza_def} can be found.} \wee{Note that \eqref{fnza_def} defines a non-polynomial weighted least-squares approximation for $\alpha \neq 0$ which is built upon a mapped polynomial basis.} Similarly to the interpolatory framework based on mapped basis elements outlined in Section \ref{sec:sezione_fake_quadrature}, we have also \wee{in the least-squares setting} the relation
\begin{equation*}
    F_{n,\mathcal{X}}^{\alpha}(f) = F_{n,M_{\alpha}(\mathcal{X})}^{0}(g) \circ M_{\alpha},
\end{equation*}
where $g$ is uniquely determined by the relation $f = g \circ M_{\alpha}$ \wee{and where $F_{n,M_{\alpha}(\mathcal{X})}^{0}(g)$ represents a polynomial least-squares fit of the function $g$ on the mapped nodes $M_{\alpha}(\mathcal{X})$.}

\begin{definition}[Kosloff Tal-Ezer Least-squares (KTL) quadrature]
Let $F_{n,\mathcal{X}}^{\alpha} (f)$ be the weighted least-squares approximant of a continuous function $f$ as introduced in \eqref{fnza_def}, then we define the \textbf{KTL quadrature formula} $\Int^{\alpha}_{n,\mathcal{X}}(f,I)$ as
\begin{equation} \label{KTLquad_def}
    \Int^{\alpha}_{n,\mathcal{X}}(f,I):=  \Int(F_{n,\mathcal{X}}^{\alpha} (f),I)\approx \Int(f,\Omega),
\end{equation}
In the particular case $\# \mathcal{X} = m +1 = n+1 = \mathrm{dim}(\mathbb{P}_n^{\alpha})$ the formula $\Int^{\alpha}_{n,\mathcal{X}}(f,I)$ is interpolatory and will be referred to as \textbf{KTI quadrature formula}.

\end{definition}

A first fundamental property of the quadrature formula $\Int^{\alpha}_{n,\mathcal{X}}(f,I)$ is its exactness for all mapped polynomials in the space $\mathbb{P}_{n}^{\alpha}$. For the calculation of the quadrature formula, we choose a basis $\Phi^{\alpha}=\{\phi_{i}^{\alpha} : i=0, \ldots, n\}$ for the space $\mathbb{P}_{n}^{\alpha}$. Then, we can write the least-squares approximant as
\begin{equation} \label{def_A_0}
    F_{n,\mathcal{X}}^{\alpha}(f) = \sum_{i=0}^{n} \gamma_{i} \phi_{i}^{\alpha},
\end{equation}
where the coefficient vector $\boldsymbol{\gamma}:=(\gamma_0,\dots,\gamma_n)^{\top}$ is determined by the least-squares solution of the following linear system 
\begin{equation}\label{def_A_1}
	\mathbf{W} \mathbf{A}^{\alpha} \boldsymbol{\gamma} = \mathbf{W} \boldsymbol{f}.
\end{equation}
In this system
\[\mathbf{W} = \mathrm{diag}(\sqrt{\mu_0}, \ldots, \sqrt{\mu_m}) \in \mathbb{R}^{(m+1) \times (m+1)}\]
denotes the matrix with the roots of the least-squares weights \wee{$\mu_i$ given in \eqref{mu_def_1}}, the matrix $\mathbf{A}^{\alpha} \in \mathbb{R}^{(m+1) \times (n+1)}$ is defined by the entries $\mathbf{A}_{ij}^{\alpha} = \phi_{j}^{\alpha}(x_{i})$ and
$\boldsymbol{f} = (f(x_0), \ldots, f(x_m))^\top$ is the vector with all samples of $f$ on $\mathcal{X}$.  

\we{We now have} two possibilities to calculate the KTL quadrature formula:
\begin{enumerate}
    \item Based on the decomposition \eqref{def_A_0} we have the formula
    \[\Int^{\alpha}_{n,\mathcal{X}}(f,I) = \boldsymbol{\gamma}^\top \boldsymbol{\tau}^{\alpha},\]
    where $\boldsymbol{\gamma} = ((\mathbf{A}^{\alpha})^{\top} \mathbf{W}^2 \mathbf{A}^{\alpha})^{-1} (\mathbf{A}^{\alpha})^{\top}\mathbf{W}^2 \boldsymbol{f}$ is the least-squares solution of the weighted system \eqref{def_A_1} and $\boldsymbol{\tau}^{\alpha} \in \mathbb{R}^{n+1}$ is a moment vector with the entries  $\tau_i^{\alpha}=\Int(\phi_i^{\alpha},I),\; i=0,\ldots, n$.
\item The formula above can be rewritten in an alternative form as
 \[\Int^{\alpha}_{n,\mathcal{X}}(f,I) = (\boldsymbol{w}^{\alpha})^\top \boldsymbol{f},\]
 with the quadrature weights $\boldsymbol{w}^{\alpha} \in \mathbb{R}^{m+1}$ given as the weighted least-squares solution $\boldsymbol{w}^{\alpha} = \mathbf{W}^2 \mathbf{A}^{\alpha} ((\mathbf{A}^{\alpha})^{\top} \mathbf{W}^2 \mathbf{A}^{\alpha})^{-1}  \boldsymbol{\tau}^{\alpha}$ of the linear system $$(\mathbf{A}^{\alpha})^{\top} \boldsymbol{w}^{\alpha} = \boldsymbol{\tau}^{\alpha}.$$
\end{enumerate}

Note that in the interpolatory case $n = m$ it is not necessary to construct the weight matrix $\mathbf{W}$. In this case, the matrix $\mathbf{A}^{\alpha}$ is invertible and the linear systems in 1. and 2. can be solved directly. \we{Note also that the formulas in 1. and 2. are analytically the same. From a numerical point of view, small differences can occur in finite precision arithmetic due to a switched order of the operations. The conditioning and the number of computational steps are the same for both formulas. The formula in 1. includes the moments $\boldsymbol{\tau}^{\alpha}$ explicitly in the rule, while 2. corresponds to the classic quadrature rule formulation in terms of function evaluations.}   \\ 

In the next section, we focus on a particular choice of the basis $\Phi^{\alpha}$ for the space $\mathbb{P}_{n}^{\alpha}$, and provide an efficient and stable procedure for the computation of the moment vector $\boldsymbol{\tau}^{\alpha}$ as well as a convergence analysis for the resulting quadrature formula.

\section{Computation and convergence of KTL quadrature} \label{sec:KTLproperties}
\subsection{Computation of KTL weights in the Chebyshev basis}

The usage of the Chebyshev polynomials $ \{T_{i}(x) = \cos(i \arccos (x)), \, i=0,\ldots,n\}$, as a basis for the space of polynomials of degree $n$ leads to the basis $\{ \phi_i^{\alpha}(x) = T_i(M_{\alpha}(x)), \, i=0,\ldots,n\}$, for the \wee{mapped} space $\mathbb{P}_n^{\alpha}$. 
\wee{A simple argument provided in \cite[Lemma 2.1]{platte} shows that the mapped Chebyshev polynomials $\phi_i^{\alpha}$ form an orthonormal basis with respect to a specific weighted inner product. Further, for this mapped basis an upper bound for the conditioning of the least-squares problem \eqref{def_A_1}, and, thus, for the calculation of the quadrature weights is proven in \cite{platte}. For equispaced grids $\mathcal{X}$ this bound essentially depends on the relation between $m$ and $n$ and the mapping parameter $\alpha$. We illustrate this conditioning for some ratios $n/m$ in Fig. \ref{fig:condizionamento}. It is visible that decreasing the ratio $n/m$ and choosing the parameter $\alpha$ close to $1$ has a significant impact on the conditioning of the least-squares problem.} 

\begin{figure}[H]
	\begin{subfigure}{0.34\textwidth}
		\includegraphics[width=\linewidth]{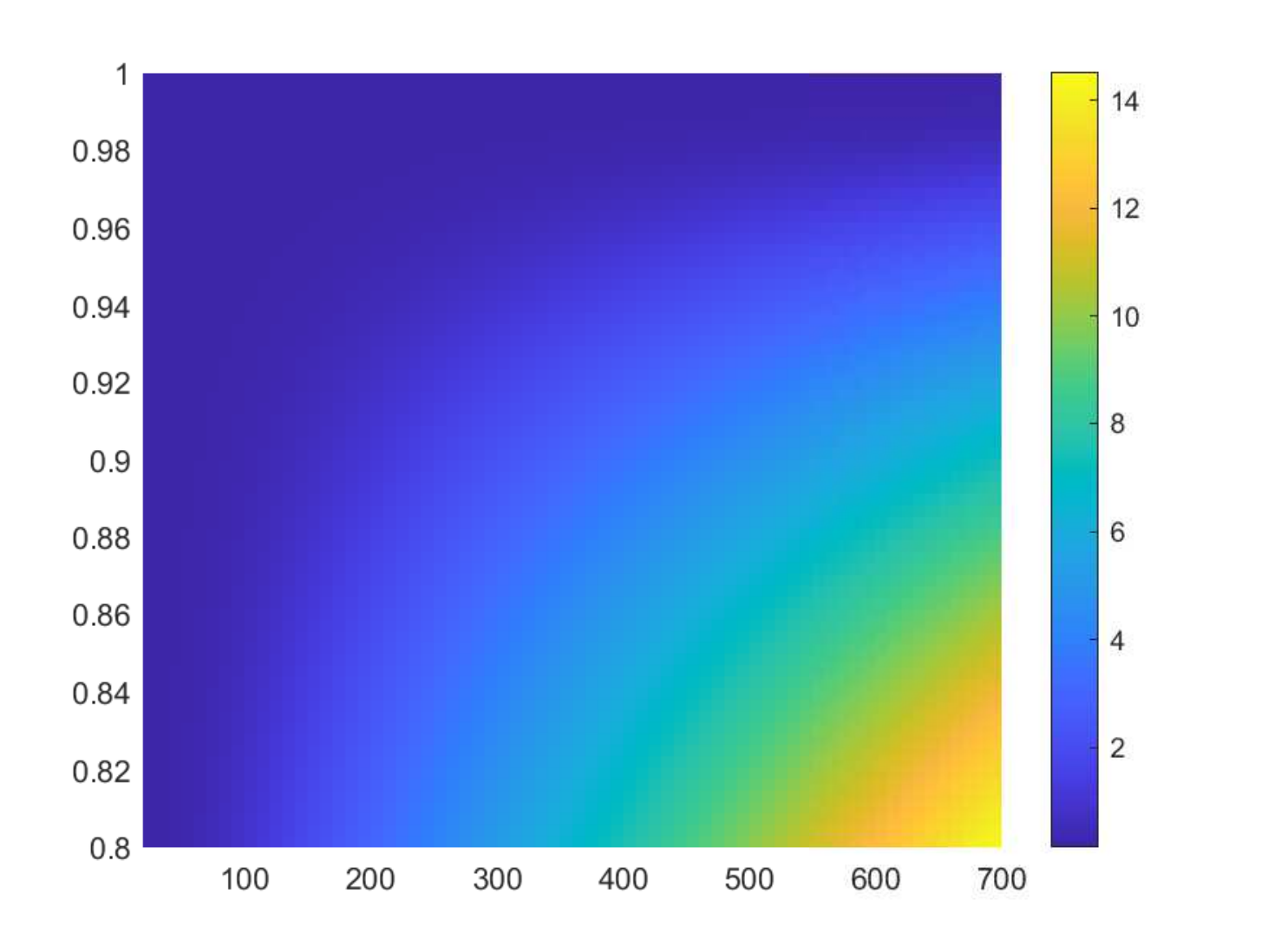}
		\caption*{$\frac{n}{m} = 0.5$}
	\end{subfigure}\hspace{-0.2cm} 
	\begin{subfigure}{0.34\textwidth}
		\includegraphics[width=\linewidth]{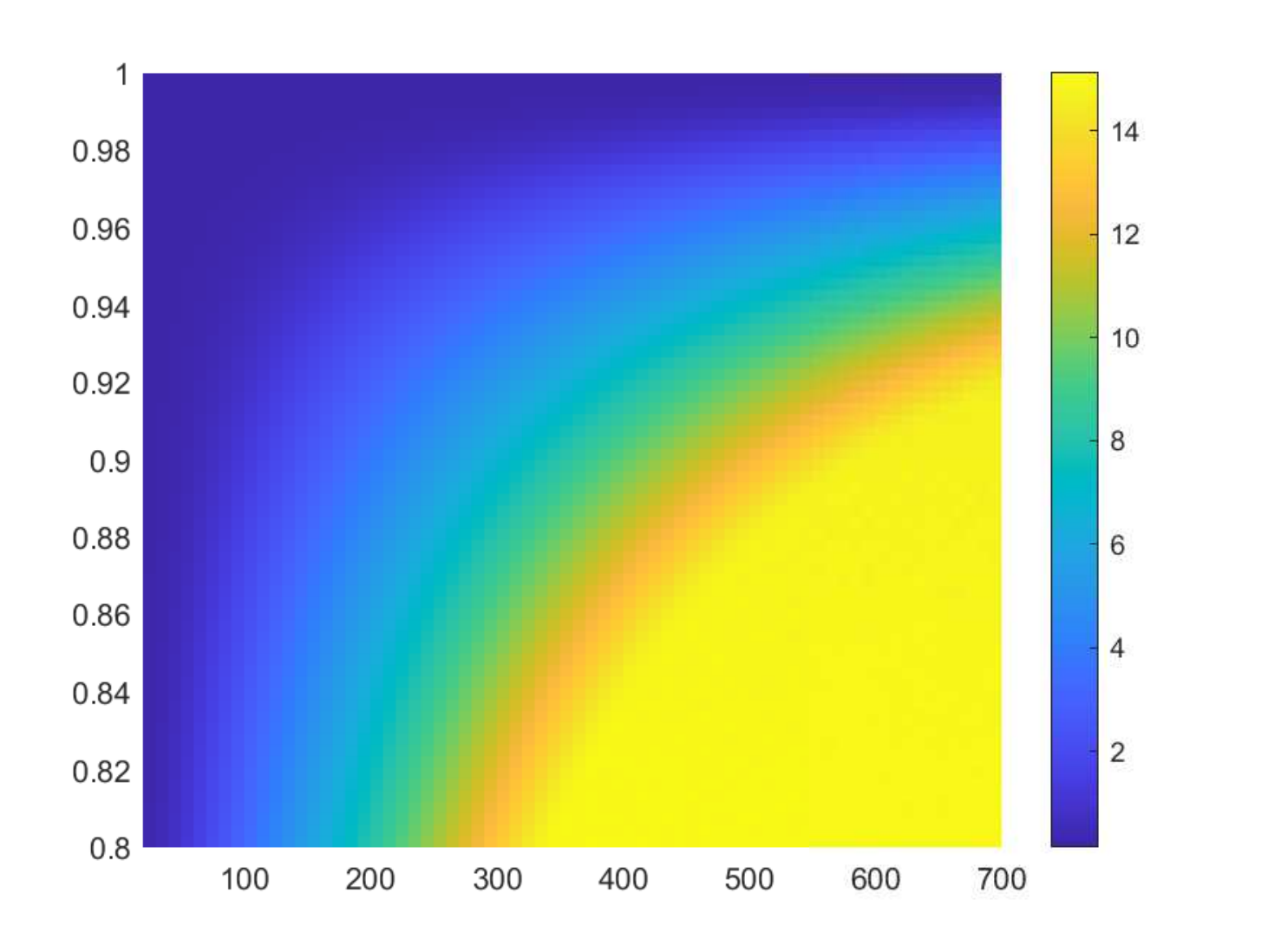}
		\caption*{$\frac{n}{m} = 0.7$}
	\end{subfigure}\hspace{-0.2cm} 
	\begin{subfigure}{0.34\textwidth}
		\includegraphics[width=\linewidth]{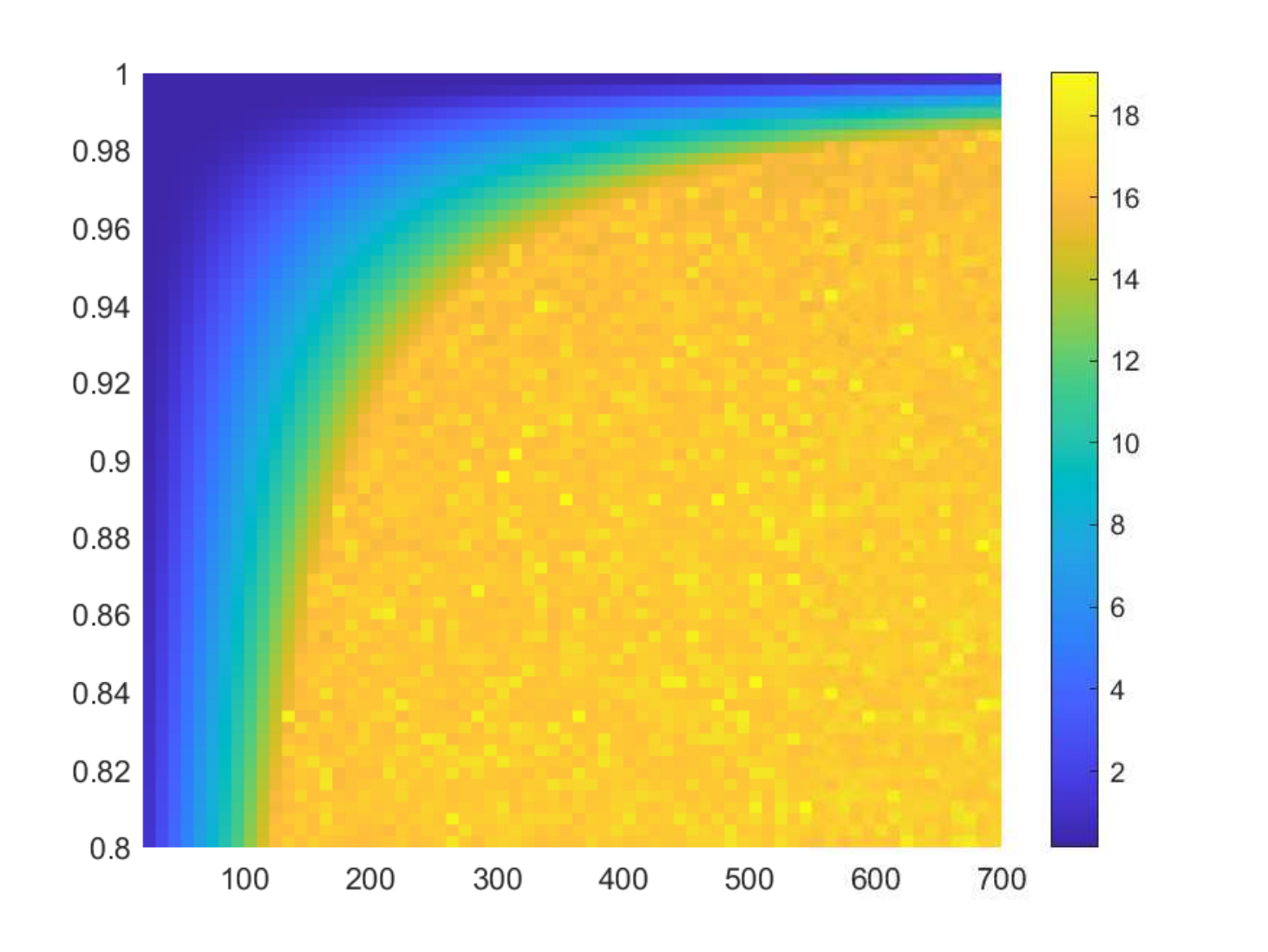}
		\caption*{$\frac{n}{m} = 1$}
	\end{subfigure}\hfil 
\caption{\wee{Condition numbers of the matrix $\mathbf{W} \mathbf{A}^{\alpha}$ in the least-squares problem \eqref{def_A_1} for different values of $m$ (size of the equispaced grid $\mathcal{X}$, displayed on the x-axis), the mapping parameter $\alpha$ (displayed on the y-axis) and the ratio $n/m$ between the degree $n$ of the polynomial space and $m$. The colors represent the value $\log_{10}(\textrm{Cond}(\mathbf{W} \mathbf{A}^{\alpha}))$.}}
\label{fig:condizionamento}
\end{figure}

\wee{Using the mapped Chebyshev polynomials as basis} allows to calculate the KTL quadrature weights $\boldsymbol{w}^{\alpha}$ in terms of a cosine and a non-equidistant fast Fourier transform. More precisely, the use of the Chebyshev basis leads to the moment vector $\boldsymbol{\tau}^{\alpha} = (\tau_{0}^{\alpha},...,\tau_{n}^{\alpha})^{\top}$ with the entries
\begin{equation}
    \tau_{i}^{\alpha} = \int_{-1}^{1} T_{i}(M_{\alpha}(x))\mathrm{d}x.
\end{equation}
Because of this particular structure, the moments $\tau_{i}^{\alpha}$ can be calculated by a cosine transform. 
\begin{theorem} \label{teo_dcs_1}
For $0 < \alpha \leq 1$ and $i \in \mathbb{N}_0$, the moment
$$
\tau_{i}^{\alpha} = \int_{-1}^{1} T_{i}(M_{\alpha}(x))\mathrm{d}x
$$
corresponds to the $i$-th coefficient $\mathcal{F}_{\cos}(g_{\alpha})(i)$ of the continuous cosine transform of the function
$$
g_{\alpha}(t) = \frac{\sin(t)}{\sqrt{\frac{1}{\sin^{2}(\alpha \pi / 2)}-\cos^{2}(t)}}\frac{1}{\alpha} ,\quad t \in [0,\pi].
$$
For $\alpha = 0$, the moments are given by 
\[\tau_{i}^{0} = \left\{ \begin{array}{ll} 0 & \text{if $i$ is odd}, \\ \frac{2}{1 - i^2} & \text{if $i$ is even}. \end{array} \right. \]
\end{theorem}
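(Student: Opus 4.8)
The plan is to evaluate $\tau_i^{\alpha}$ by the change of variables $t = \arccos(M_{\alpha}(x))$, which is exactly the substitution that collapses the mapped Chebyshev polynomial into a bare cosine. By the very definition $T_i(y) = \cos(i\arccos y)$ we have $T_i(M_{\alpha}(x)) = \cos(it)$, so the integrand becomes trivial in the new variable and the whole content of the theorem is pushed into the Jacobian $\mathrm{d}x/\mathrm{d}t$. Since $M_{\alpha}$ is an increasing bijection of $I$ onto $I$, as $x$ runs from $-1$ to $1$ the value $M_{\alpha}(x)$ runs from $-1$ to $1$, hence $t$ runs from $\pi$ down to $0$; this fixes the new limits of integration and introduces a sign that will be absorbed when the limits are reordered.

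First I would differentiate the defining relation $\cos t = M_{\alpha}(x)$ to get $-\sin(t)\,\mathrm{d}t = M_{\alpha}'(x)\,\mathrm{d}x$, and then insert the closed form of the derivative recalled in the excerpt, namely $M_{\alpha}'(x) = \frac{\alpha \pi}{2\sin(\alpha\pi/2)}\sqrt{1 - \sin^2(\alpha\pi/2)\,M_{\alpha}(x)^2}$ evaluated at $M_{\alpha}(x) = \cos t$. The one genuinely non-mechanical step is to pull $\sin^2(\alpha\pi/2)$ out of the radical,
$$\sqrt{1 - \sin^2(\alpha\pi/2)\cos^2 t} = \sin(\alpha\pi/2)\sqrt{\tfrac{1}{\sin^2(\alpha\pi/2)} - \cos^2 t},$$
which is legitimate because $\sin(\alpha\pi/2) > 0$ for $\alpha \in (0,1]$. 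After this the factor $\sin(\alpha\pi/2)$ cancels the one in the denominator of $M_{\alpha}'$, leaving precisely $\mathrm{d}x = -\frac{2}{\pi}\cdot\frac{1}{\alpha}\cdot\frac{\sin t}{\sqrt{\frac{1}{\sin^2(\alpha\pi/2)} - \cos^2 t}}\,\mathrm{d}t = -\frac{2}{\pi}\,g_{\alpha}(t)\,\mathrm{d}t$, so the denominator and the $1/\alpha$ prefactor of $g_{\alpha}$ both emerge automatically from the geometry of the map.

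Assembling the two observations and reordering the limits (which cancels the minus sign) yields $\tau_i^{\alpha} = \frac{2}{\pi}\int_0^{\pi} g_{\alpha}(t)\cos(it)\,\mathrm{d}t$, which is exactly the coefficient $\mathcal{F}_{\cos}(g_{\alpha})(i)$ for the continuous cosine transform taken with the standard normalization $\frac{2}{\pi}$. The degenerate case $\alpha = 0$ is then disposed of directly, since $M_0 = \mathrm{id}$ forces $\tau_i^{0} = \int_{-1}^1 T_i(x)\,\mathrm{d}x$, which vanishes for odd $i$ by the parity of $T_i$ and equals the classical value $2/(1-i^2)$ for even $i$. I expect no real obstacle beyond bookkeeping: keeping track of the sign from the reversed orientation and confirming the $2/\pi$ normalization of the transform; the substitution itself is forced by the composite structure $T_i \circ M_{\alpha}$, and every remaining manipulation is routine trigonometric simplification.
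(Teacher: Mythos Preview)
Your proposal is correct and follows essentially the same route as the paper: both use the substitution $t=\arccos(M_{\alpha}(x))$, compute the Jacobian (the paper writes $\mathrm{d}t/\mathrm{d}x$ directly while you differentiate $\cos t = M_{\alpha}(x)$ implicitly, which is the same calculation), perform the identical factoring of $\sin(\alpha\pi/2)$ out of the radical, and then identify the resulting $\frac{2}{\pi}\int_0^{\pi}\cos(it)\,g_{\alpha}(t)\,\mathrm{d}t$ with the cosine coefficient. The $\alpha=0$ case is handled the same way in both.
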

\begin{proof}
To compute the moments for $0 < \alpha \leq 1$, we use a change of variables:
\begin{equation}
    t(x)=\arccos\Bigg( \frac{\sin(\alpha \pi x / 2)}{\sin(\alpha \pi / 2)}\Bigg).
\end{equation}
By observing that
\begin{equation*}
	\begin{split}
		\frac{\mathrm{d}t}{\mathrm{d}x}(x) &= - \frac{1}{\sqrt{1-\cos^{2}(t(x))}}\frac{\alpha \pi}{2}\frac{\cos(\alpha \pi x / 2)}{\sin(\alpha \pi / 2)} \\
		&= - \frac{1}{\sin(t(x))}\frac{\alpha \pi}{2}\frac{\sqrt{1-\sin^{2}(\alpha \pi x / 2)}}{\sin(\alpha \pi / 2)} \\
		&=  - \frac{1}{\sin(t(x))}\frac{\alpha \pi}{2} \sqrt{\frac{1}{\sin^{2}(\alpha \pi / 2)}-\cos^{2}(t(x))}
	\end{split}
\end{equation*}
we obtain
$$
\int_{-1}^{1} T_{n}(M_{\alpha}(x))\mathrm{d}x = \frac{2}{\pi} \int_{0}^{\pi} \cos(nt) \underbrace{\frac{\sin(t)}{\sqrt{\frac{1}{\sin^{2}(\alpha \pi / 2)}-\cos^{2}(t)}}\frac{1}{\alpha}}_{g_{\alpha}(t)}\mathrm{d}t.
$$
As the cosine series of the Lipschitz-function $g_{\alpha}$ in $[0,\pi]$ is given by
\begin{equation}
    g_{\alpha}(t) = \frac{\mathcal{F}_{\cos}(g_{\alpha})(0)}{2} + \sum_{n=1}^{\infty} \mathcal{F}_{\cos}(g_{\alpha})(n)\cos(nt).
\end{equation}
we have showed that
$$
\int_{-1}^{1} T_{n}(M_{\alpha}(x))\mathrm{d}x = \mathcal{F}_{\cos}(g_{\alpha})(n).
$$
Finally, for $\alpha = 0$, we obtain
\[ \tau_{i}^{0} = \int_{-1}^{1} T_{i}(x)\mathrm{d}x = \int_0^\pi \cos (i t) \sin t \mathrm{d} t = \frac{1 + (-1)^i}{1 - i^2} \quad \text{if $i \neq 1$}. \]
For $i = 1$, the evaluation of the integral gives $\tau_{1}^{0} = 0$. 
\end{proof}

We conclude this section with a pseudo-code for the calculation of the KTL and KTI quadrature weights using the Kosloff-Tal-Ezer map \wee{and the mapped Chebyshev basis}. We summarize the main steps in Algorithm \ref{alg:1} and  Algorithm \ref{alg:2}.

\vspace{2mm}

\begin{algorithm}[H]  \label{alg:1}
	\SetAlgoLined
	\KwIn{\\ $\bullet\;$ $\mathcal{X} = \{ x_{i}, \, i=0,...,m\} \subseteq \mathbb{R}$ : quadrature nodes;\\
		$\bullet\;$ $n \leq m$ : polynomial degree for the approximation space;\\
		$\bullet\;$ $\boldsymbol{\mu}=(\sqrt{\mu_{0}} , ... ,\sqrt{\mu_{m}})^{\top}$ : weights for least-squares problem \eqref{mu_def_1};\\ 
		$\bullet\;$ $\mathbf{f} = (f(x_{0}),...,f(x_{m}))^{\top}$: sample vector of $f$ on $\mathcal{X}$;\\
		$\bullet\;$ $M_{\alpha} : [-1,1] \longrightarrow [-1,1]$: Kosloff-Tal-Ezer map with parameter $\alpha$.
	}
	\vspace{2mm}
	\Begin{
		Compute moments $\boldsymbol{\tau}^{\alpha} \in \mathbb{R}^{n+1}$ through discrete cosine transform of $g_{\alpha}$ (Thm. \ref{teo_dcs_1});\\
		Build diagonal matrix with the weights for the least-squares problem: $\mathbf{W} = \textrm{diag}(\sqrt{\mu_{0}} , ... ,\sqrt{\mu_{m}}) \in \mathbb{R}^{m+1} \times \mathbb{R}^{m+1}$;\\
		Construct matrix: $ \mathbf{A}^{\alpha} \in \mathbb{R}^{m+1} \times \mathbb{R}^{n+1}, \, \mathbf{A}_{ij}^{\alpha} = T_{j-1}(M_{\alpha}(x_{i-1}))$, for $i = 1, \ldots, m+1$, and $j = 1, \ldots, n+1$;\\
		Find coefficient vector $\boldsymbol{\gamma}$ as the solution of the least-squares problem $\mathbf{W} \mathbf{A}^{\alpha} \boldsymbol{\gamma}=\mathbf{W}\boldsymbol{f}$;\\
		Compute the quadrature value $\Int^{\alpha}_{n,\mathcal{X}}(f,I) = \boldsymbol{\gamma}^{\top} \boldsymbol{\tau}^{\alpha}.$
	}
	\KwOut{The value of the KTL quadrature $ \Int^{\alpha}_{n,\mathcal{X}}(f,I) $.}
	\caption{KTL quadrature formula}
\end{algorithm}

\newpage

\begin{algorithm}[H] \label{alg:2}
	\SetAlgoLined
	\KwIn{\\ $\bullet\;$ $\mathcal{X} = \{ x_{i}, \, i=0,...,m\} \subseteq \mathbb{R}$ : quadrature nodes, $m = n = \mathrm{\dim}(\mathbb{P}_n^{\alpha}) - 1$;\\
		$\bullet\;$ $\mathbf{f} = (f(x_{0}),...,f(x_{m}))^{\top}$: vector of function samples on $\mathcal{X}$;\\
		$\bullet\;$ $M_{\alpha} : [-1,1] \longrightarrow [-1,1]$: Kosloff-Tal-Ezer map with parameter $\alpha$.
	}
	\vspace{2mm}
	\Begin{
		Compute moments $\boldsymbol{\tau}^{\alpha} \in \mathbb{R}^{m+1}$ through discrete cosine transform of $g_{\alpha}$ (Thm. \ref{teo_dcs_1});\\
		Construct interpolation matrix: $ \mathbf{A}^{\alpha} \in \mathbb{R}^{m+1} \times \mathbb{R}^{m+1}, \, \mathbf{A}_{ij}^{\alpha} = T_{j-1}(M_{\alpha}(x_{i-1}))$; \\
		Find the quadrature weights $\boldsymbol{w}^{\alpha}$ as the solution of the linear system $(\mathbf{A}^{\alpha})^{\top} \boldsymbol{w}^{\alpha} = \boldsymbol{\tau}^{\alpha}$.;\\
		Compute the quadrature value $\Int^{\alpha}_{m,\mathcal{X}}(f,I) = (\boldsymbol{w}^{\alpha})^\top \boldsymbol{f}.$
	}
	\KwOut{The value of the KTI quadrature $ \Int^{\alpha}_{m,\mathcal{X}}(f,I) $.}
	\caption{KTI quadrature formula}
\end{algorithm}

\subsection{Why the monomial basis is not so suited for calculations}

We continue the previous discussion by analyzing the computation of the quadrature formula using the standard monomial basis instead of the Chebyshev basis. \wee{From a computational point of view the usage of the monomial basis is prohibitive also if an additional KT map is used. The main reason is that the matrix $\mathbf{A}^{\alpha}$ in the solution of the least-squares problem \ref{def_A_1} with the entries $\mathbf{A}^{\alpha}_{ij} = M_{\alpha}(x_{i-1})^{j-1}$ corresponds to a Vandermonde matrix that gets ill-conditioned already for small degrees $n$.} 

\wee{The monomial basis turns out to be problematic also in regard of the moment vectors $\boldsymbol{\tau}^{\alpha}$.}
If we use the monomial polynomial basis $ \{x^{i}: i=0, \ldots, n \}$ for the space $\mathbb{P}_n$ it is possible to express the moment vector
$\boldsymbol{\tau}^{\alpha}$ explicitly. We have
\begin{equation} \tau_i^{\alpha} = \int_{-1}^{1} M_{\alpha}(x)^i \textrm{d}x = 
    \int_{-1}^{1} \Big( \frac{\sin \big(  \alpha \frac{\pi}{2} x\big)}{\sin\big(  \alpha \frac{\pi}{2}\big)} \Big)^{i} \textrm{d}x = \frac{1}{\sin\big(  \alpha \frac{\pi}{2}\big)^{i}}\int_{-1}^{1} \sin \big(  \alpha \frac{\pi}{2} x\big)^{i}\textrm{d}x
\end{equation}
For the calculation of the integral $\int_{-1}^{1} \sin ( \alpha \frac{\pi}{2} x)^{i}\textrm{d}x$, we can make use of the following recursive formula:
\begin{lemma} \label{lem:1}
Let $C\in \mathbb{R} \setminus \{ 0\} $, then the following recursive formula holds for even $i \geq 2$:
	$$
	\int_{-1}^{1} \sin ( C x)^{i}\textrm{d}x = -\frac{1}{i}\Bigg[ \sin ( C x)^{i-1}\frac{\cos ( C x)}{C} \Bigg]_{-1}^{1} + \frac{(i-1)}{i}\int_{-1}^{1} \sin ( C x)^{i-2}\textrm{d}x.
	$$
For odd numbers $i \geq 1$, we have 
	$$
	\int_{-1}^{1} \sin ( C x)^{i}\textrm{d}x = 0.
	$$
\end{lemma}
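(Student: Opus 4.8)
The plan is to treat the two cases of the statement separately, since they rest on entirely different (and elementary) principles. For odd $i \geq 1$, I would argue purely by symmetry: as $\sin$ is odd, the map $x \mapsto \sin(Cx)^i$ is an odd function of $x$ whenever $i$ is odd, and the integral of an odd function over the symmetric interval $[-1,1]$ vanishes, which is exactly the claim.

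For even $i \geq 2$, I would establish the reduction formula by integration by parts. The idea is to split $\sin(Cx)^i = \sin(Cx)^{i-1}\cdot\sin(Cx)$ and integrate by parts with $u = \sin(Cx)^{i-1}$ and $\mathrm{d}v = \sin(Cx)\,\mathrm{d}x$, so that $v = -\cos(Cx)/C$ and $\mathrm{d}u = (i-1)\sin(Cx)^{i-2}\cos(Cx)\,C\,\mathrm{d}x$. The factor $C$ produced by the chain rule in $\mathrm{d}u$ cancels the $1/C$ in $v$, leaving
\[
\int_{-1}^1 \sin(Cx)^i\,\mathrm{d}x = \Big[-\sin(Cx)^{i-1}\tfrac{\cos(Cx)}{C}\Big]_{-1}^1 + (i-1)\int_{-1}^1 \cos(Cx)^2 \sin(Cx)^{i-2}\,\mathrm{d}x.
\]

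Next I would substitute the Pythagorean identity $\cos(Cx)^2 = 1 - \sin(Cx)^2$ into the remaining integral, splitting it as $\int_{-1}^1 \sin(Cx)^{i-2}\,\mathrm{d}x - \int_{-1}^1 \sin(Cx)^{i}\,\mathrm{d}x$. The resulting term $-(i-1)\int_{-1}^1 \sin(Cx)^i\,\mathrm{d}x$ is then moved to the left-hand side; collecting the copies of $\int_{-1}^1 \sin(Cx)^i\,\mathrm{d}x$ produces a factor $i$, and dividing through by $i$ yields precisely the stated formula with coefficients $-1/i$ in front of the boundary term and $(i-1)/i$ in front of the reduced integral.

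The computation is entirely routine, so there is no genuine obstacle here; the only points requiring care are the bookkeeping of the chain-rule constant $C$ and the sign management in the algebraic rearrangement. I would also remark that the reduction formula in fact holds verbatim for every $i \geq 2$ and not only for even $i$: the restriction to even indices in the statement is merely cosmetic, since for odd $i$ the symmetry argument already gives $0$ and the recursion is never needed.
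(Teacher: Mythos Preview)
Your proposal is correct and follows essentially the same approach as the paper: the odd case is dismissed by symmetry of an odd integrand over $[-1,1]$, and the even case is handled by integrating by parts on the splitting $\sin(Cx)^{i-1}\cdot\sin(Cx)$, applying $\cos^2 = 1-\sin^2$, and solving for the original integral. Your additional observation that the recursion is valid for all $i\geq 2$ is a fair side remark not present in the paper.
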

\begin{proof}
For odd numbers $i$ the statement is trivial. For even $i \geq 2$, integration by parts yields
\begin{equation*}
	\begin{split}
		& \int_{-1}^{1} \sin ( C x)^{i}\textrm{d}x = \int_{-1}^{1} \sin ( C x)^{i-1}\sin ( C x)\textrm{d}x = \\
		& = \Bigg[ \sin ( C x)^{i-1}\frac{-\cos ( C x)}{C} \Bigg]_{-1}^{1}+\int_{-1}^{1} (i-1)\sin ( C x)^{i-2}\cos ( C x)^{2}\textrm{d}x = \\
		& = \Bigg[ \sin ( C x)^{i-1}\frac{-\cos ( C x)}{C} \Bigg]_{-1}^{1}+(i-1)\int_{-1}^{1} \sin ( C x)^{i-2}\textrm{d}x -(i-1)\int_{-1}^{1} \sin ( C x)^{i}\textrm{d}x.
	\end{split}
\end{equation*}
Thus, we obtain
\begin{equation*}
    \int_{-1}^{1} \sin ( C x)^{i}\textrm{d}x = -\frac{1}{i}\Bigg[ \sin ( C x)^{i-1}\frac{\cos ( C x)}{C} \Bigg]_{-1}^{1} + \frac{(i-1)}{i}\int_{-1}^{1} \sin ( C x)^{i-2}\textrm{d}x.
\end{equation*}
\end{proof}
Although Lemma \ref{lem:1} provides a simple scheme to calculate the moment vector $\boldsymbol{\tau}^{\alpha}$, we show why from a computational point of view it makes little sense to compute the moments in this way. For this, we suppose that $S_i$ is a sequence of numbers satisfying the recursion 
\begin{gather} \label{eq:recursion}
    S_i = -\frac{1}{i}\Bigg[ \sin ( C x)^{i-1}\frac{\cos ( C x)}{C} \Bigg]_{-1}^{1} + \frac{(i-1)}{i} S_{i-2}, \quad i \geq 2,
\end{gather}
and in which the initial value $S_0$ is a slight perturbation of the exact moment value $\int_{-1}^1 1 \mathrm{d}x = 2$. Then, by Lemma \ref{lem:1}, the error $\mathcal{E}_i$ between $S_i$ and $\int_{-1}^1 \sin ( C x)^{i}\textrm{d}x$ satisfies the recurrence relation
\begin{equation}
	\begin{split}
		\mathcal{E}_{i}& = S_{i}-\int_{-1}^{1} \sin ( C x)^{i}\textrm{d}x = \\
		&= \frac{(i-1)}{i} S_{i-2} - \frac{(i-1)}{i}\int_{-1}^{1} \sin ( C x)^{i-2}\textrm{d}x = \frac{(i-1)}{i}\mathcal{E}_{i-2}.
	\end{split}
\end{equation}
We will only consider the case when $i = 2k$ is even (the case $i$ odd is not relevant as the odd moments are already known). In this case, we get for the errors
\begin{gather*}
	\mathcal{E}_{2k}=\frac{(2k-1)}{2k}\mathcal{E}_{2(k-1)} = \frac{(2k-1)}{2k}\frac{(2(k-1)-1)}{2(k-1)}\mathcal{E}_{2(k-2)} = \\
	= \frac{(2k-1)}{2k}\frac{(2(k-1)-1)}{2(k-1)} \cdots \frac{(2(k-(k-1)+1)-1)}{2(k-(k-1)+1)} \mathcal{E}_{2(k-(k-1))} = \\
	= \frac{(2k-1)}{2k}\frac{(2(k-1)-1)}{2(k-1)} \cdots \frac{3}{4} \, \frac{1}{2}\mathcal{E}_{0}.
\end{gather*}
We observe that
\begin{equation*}
    \frac{(2(k+1)-1)}{2(k+1)} \geq \frac{(2k-1)}{2k} \Leftrightarrow 4k(k+1)-2k \geq 2(k+1)(2k-1) \Leftrightarrow 4k^{2}+2k \geq 4k^{2}+2k-2.
\end{equation*}
We show that for $k \longrightarrow \infty$ error diverges. We fix $k^{\star} < k$, then
\begin{equation*}
    \frac{\mathcal{E}_{2k}}{\sin\big( \alpha \frac{\pi}{2}\big)^{2k}} \geq \frac{1}{\sin\big(\alpha \frac{\pi}{2}\big)^{2k}} \Bigg[ \Big(\frac{2k^{\star}-1}{2k^{\star}}\Big)^{k-k^{\star}}\Big(\frac{1}{2}\Big)^{k^{\star}}\Bigg]\mathcal{E}_{0} = \frac{1}{\sin\big(\alpha \frac{\pi}{2}\big)^{2k}} \Big(\frac{2k^{\star}-1}{2k^{\star}}\Big)^{k} \mathcal{E}_{0} \tilde{G}
\end{equation*}
where
\begin{equation*}
    \tilde{G} = \Big(\frac{2k^{\star}-1}{2k^{\star}}\Big)^{k^{\star}}\Big(\frac{1}{2}\Big)^{k^{\star}}.
\end{equation*}
Now, if $\alpha \neq 1$ then $\sin\big( \alpha \frac{\pi}{2}\big)^{2}<1$. Furthermore, since $\frac{2k^{\star}-1}{2k^{\star}} \longrightarrow 1$ for $k^{\star} \longrightarrow \infty$, there exists a $k^{\star}$ such that $\frac{2k^{\star}-1}{2k^{\star}} > \sin\big( \alpha \frac{\pi}{2}\big)^{2}$. Since $\tilde{G}$ does not depend on $k$ we can state that
\begin{equation}
	\frac{\mathcal{E}_{2k}}{\sin\big( \alpha \frac{\pi}{2}\big)^{2k}} \longrightarrow \infty \textrm{ for } k \longrightarrow \infty.
	\label{equa_div_bmon}
\end{equation}
This implies that the calculation of the moments 
$\tau_i^{\alpha} = \int_{-1}^1 M_{\alpha}(x)^i \mathrm{d}x$ via the recursion formula \eqref{eq:recursion} is not stable.  

\subsection{Symmetry of the KTI weights}

For the quadrature nodes $\mathcal{X} = \{x_{i} : i=0,\ldots,m\} \subset I$ we denote by $z_i = M_{\alpha}(x_{i})$ the respectively mapped nodes. Then, by the simple change of variables $y = M_{\alpha}(x)$, the interpolatory KTI quadrature weights can be represented as
\begin{equation} \label{eq:intweights}
    w_{i}^{\alpha} = \int_{-1}^{1} \ell_{i}^{\alpha}(M_{\alpha}(x))\textrm{d}x = \frac{1}{\alpha \pi} \int_{-1}^{1} \ell_{i}^{\alpha}(y) \frac{2 \sin(\alpha \pi/2)}{\sqrt{1-\sin(\alpha \pi/2)^{2}y^{2}}}\textrm{d}y,
\end{equation}
where 
$$
\ell_{i}^{\alpha}(y) = \prod_{\substack{j = 0 \\ j\neq i}}^{m} \frac{y-z_{j}}{z_{i}-z_{j}}
$$
denotes the $i$-th Lagrange polynomial with respect to the mapped nodes $z_i = M_{\alpha}(x_{i})$, $i=0,\ldots,m$. 
We next show a result regarding the symmetry of the weights of the interpolatory KTI quadrature scheme. \wee{Such a symmetry of the quadrature weights is useful in cases when the integral of an even or odd function $f$ has to be calculated.}

\begin{theorem}
If the nodes $\mathcal{X} = \{x_0, \ldots, x_m\} \subset I$ are symmetric with respect to the origin, i.e., $x_{i}+x_{m-i}=0$ for $i = 0, \ldots, m$, then also the KTI weights satisfy the symmetry relations $w_i^{\alpha} = w_{m-i}^{\alpha}$.  
\end{theorem}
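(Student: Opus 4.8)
The plan is to exploit the oddness of the Kosloff Tal-Ezer map together with the integral representation \eqref{eq:intweights} of the weights. First I would observe that $M_\alpha$ is an odd function, since $M_\alpha(x) = \sin(\alpha\pi x/2)/\sin(\alpha\pi/2)$ and the sine is odd, whence $M_\alpha(-x) = -M_\alpha(x)$. Consequently the mapped nodes $z_i = M_\alpha(x_i)$ inherit the assumed symmetry of $\mathcal{X}$: from $x_{m-i} = -x_i$ we get
\[
z_{m-i} = M_\alpha(x_{m-i}) = M_\alpha(-x_i) = -M_\alpha(x_i) = -z_i.
\]
So the symmetry of the problem is transferred from the original nodes to the Chebyshev-side nodes where the Lagrange interpolation actually takes place.

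The central step is to prove the reflection identity $\ell_i^{\alpha}(y) = \ell_{m-i}^{\alpha}(-y)$ for the Lagrange polynomials on the mapped nodes. I would start from
\[
\ell_{m-i}^{\alpha}(-y) = \prod_{\substack{j=0 \\ j\neq m-i}}^{m} \frac{-y - z_j}{z_{m-i} - z_j},
\]
substitute $z_{m-i} = -z_i$ in every denominator, and reindex the product by $k = m-j$ (so that $k$ runs over all indices except $i$). Using $z_j = z_{m-k} = -z_k$, each numerator becomes $-y - z_j = -(y - z_k)$ and each denominator becomes $-z_i + z_k = -(z_i - z_k)$; the two sign factors cancel in each fraction, and the product collapses exactly to $\ell_i^{\alpha}(y)$.

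Finally I would combine this identity with the structure of \eqref{eq:intweights}. Writing $W(y) = \frac{2\sin(\alpha\pi/2)}{\sqrt{1-\sin(\alpha\pi/2)^2 y^2}}$ for the even weight function appearing there, the substitution $y \mapsto -y$ in the integral for $w_{m-i}^{\alpha}$ gives
\[
w_{m-i}^{\alpha} = \frac{1}{\alpha\pi}\int_{-1}^{1} \ell_{m-i}^{\alpha}(y)\,W(y)\,\mathrm{d}y
= \frac{1}{\alpha\pi}\int_{-1}^{1} \ell_{m-i}^{\alpha}(-y)\,W(-y)\,\mathrm{d}y,
\]
and using $\ell_{m-i}^{\alpha}(-y) = \ell_i^{\alpha}(y)$ together with $W(-y)=W(y)$ the right-hand side is precisely $w_i^{\alpha}$. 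The only delicate point is the reindexing bookkeeping in the reflection identity — one must track that the excluded index transforms correctly ($j \neq m-i \Leftrightarrow k \neq i$) and that both sign flips from numerator and denominator cancel — but once that identity is in place the evenness of $W$ and a single change of variables close the argument immediately.
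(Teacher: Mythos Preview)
Your proof is correct and follows essentially the same route as the paper: both establish the reflection identity $\ell_i^{\alpha}(y)=\ell_{m-i}^{\alpha}(-y)$ for the Lagrange polynomials on the mapped nodes (the paper phrases it as $\ell_i^{\alpha}(M_\alpha(x))=\ell_{m-i}^{\alpha}(M_\alpha(-x))$, which is equivalent by the oddness of $M_\alpha$) and then conclude via a single change of variables in the integral representation of the weights. The only cosmetic difference is that the paper works with the $x$-integral $\int_{-1}^1 \ell_i^{\alpha}(M_\alpha(x))\,\mathrm{d}x$ directly, whereas you work after the change of variables and invoke the evenness of the weight function $W$.
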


\begin{proof}
As we consider the interpolatory KTI quadrature formulas, the respective weights $w_i^{\alpha}$ can be computed as the integrals of the mapped Lagrange basis $\lambda_{i}^{\alpha}(x)$ relative to the nodes $x_i$, $i = 0, \ldots, m$. As $x_i$ are symmetric with respect to the origin, also the mapped nodes $z_i = M_{\alpha}(x_i)$ are symmetric and we get
\begin{equation*}
	\begin{split}
		\ell_{i}^{\alpha}(M_{\alpha}(x)) &= \prod_{\substack{j = 0 \\ j\neq i}}^{m} \frac{M_\alpha(x)-z_{j}}{z_{i}-z_{j}} = \prod_{\substack{j = 0 \\ j\neq i}}^{m} \frac{M_\alpha(x)+z_{m-j}}{(-z_{m-i})-(-z_{m-j})} \\
		&= \prod_{\substack{j = 0 \\ j\neq i}}^{m} \frac{-M_{\alpha}(x)-z_{m-j}}{z_{m-i}-z_{m-j}} = \prod_{\substack{j = 0 \\ j\neq m-i}}^{m} \frac{M_{\alpha}(-x)-z_{j}}{z_{m-i}-z_{j}} = \ell_{m-i}^{\alpha}(M_{\alpha}(-x)).
	\end{split}
\end{equation*}
This equation and the representation \eqref{eq:intweights} for the interpolatory quadrature formula give the identity
\begin{equation*}
	\begin{split}
		w_{i}^{\alpha} &= \int_{-1}^{1} \ell_{i}^{\alpha}(M_{\alpha}(x))\textrm{d}x = \int_{-1}^{1} \ell_{m-i}^{\alpha}(M_{\alpha}(-x))\textrm{d}x= \int_{-1}^{1} \ell_{m-i}^{\alpha}(M_{\alpha}(x))\textrm{d}x =  w_{m-i}^{\alpha},
	\end{split}
\end{equation*}
and therefore the symmetry of the KTI quadrature weights.
\end{proof}

\subsection{The KTI weights in the limit cases $\alpha \to 0^+$ and $\alpha \to 1^-$}
\label{sub_sec_limit_alpha}

\begin{figure}[htb]
		\centering 
	\begin{subfigure}{0.25\textwidth}
		\includegraphics[width=\linewidth]{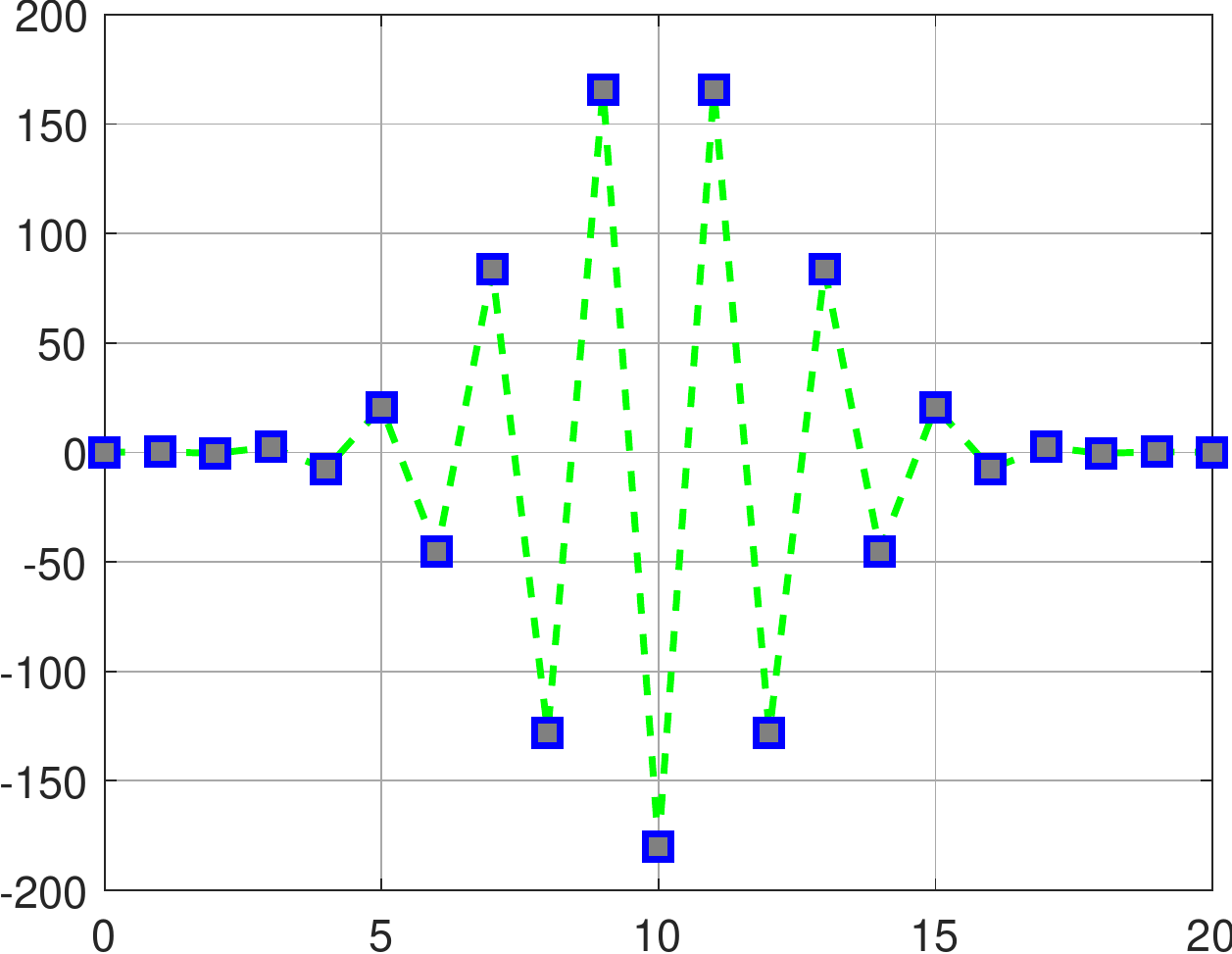}
		\caption*{$\alpha = 0$}
	\end{subfigure}\hfil 
	\begin{subfigure}{0.25\textwidth}
		\includegraphics[width=\linewidth]{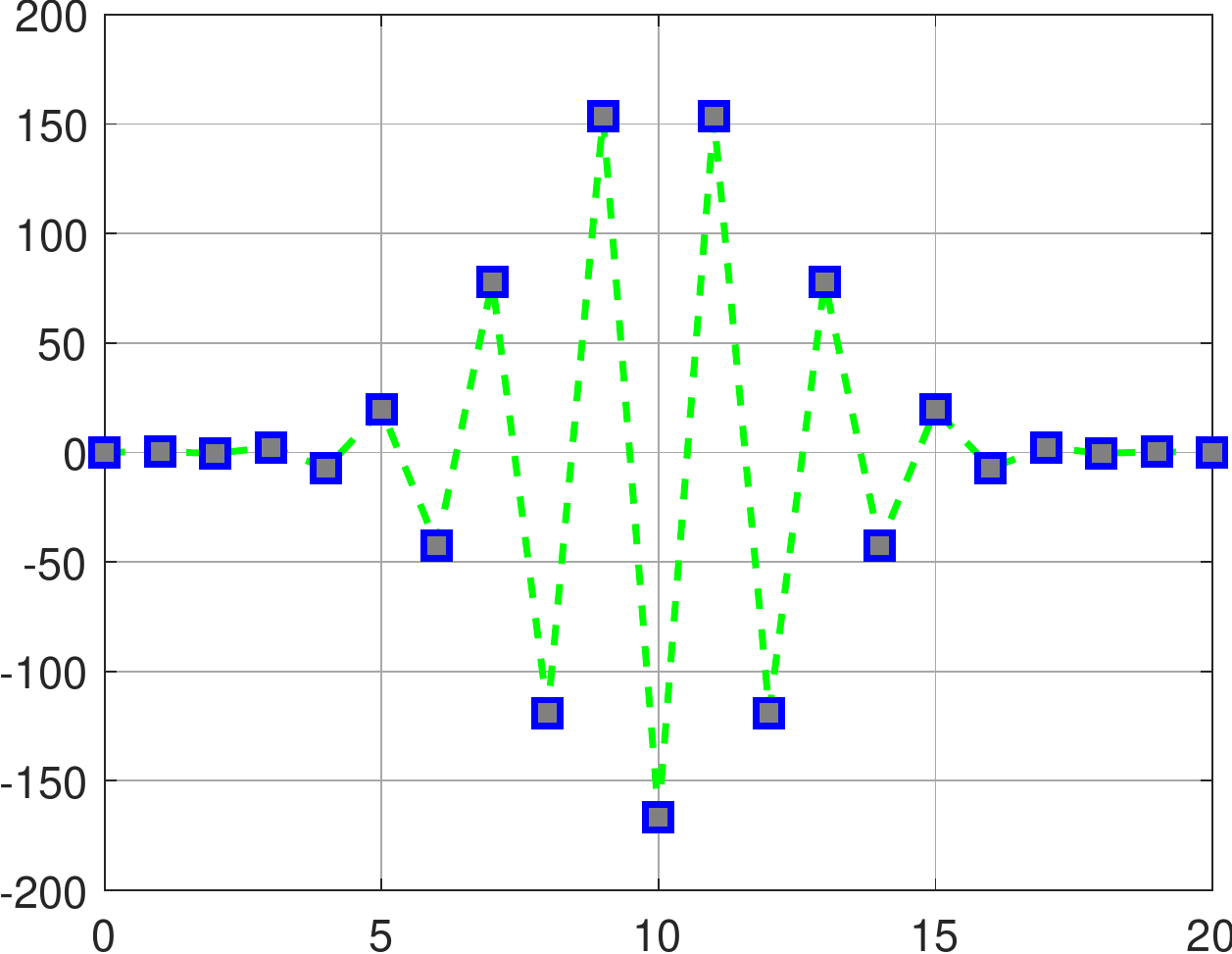}
				\caption*{$\alpha = 0.10$}
	\end{subfigure}\hfil 
	\begin{subfigure}{0.25\textwidth}
		\includegraphics[width=\linewidth]{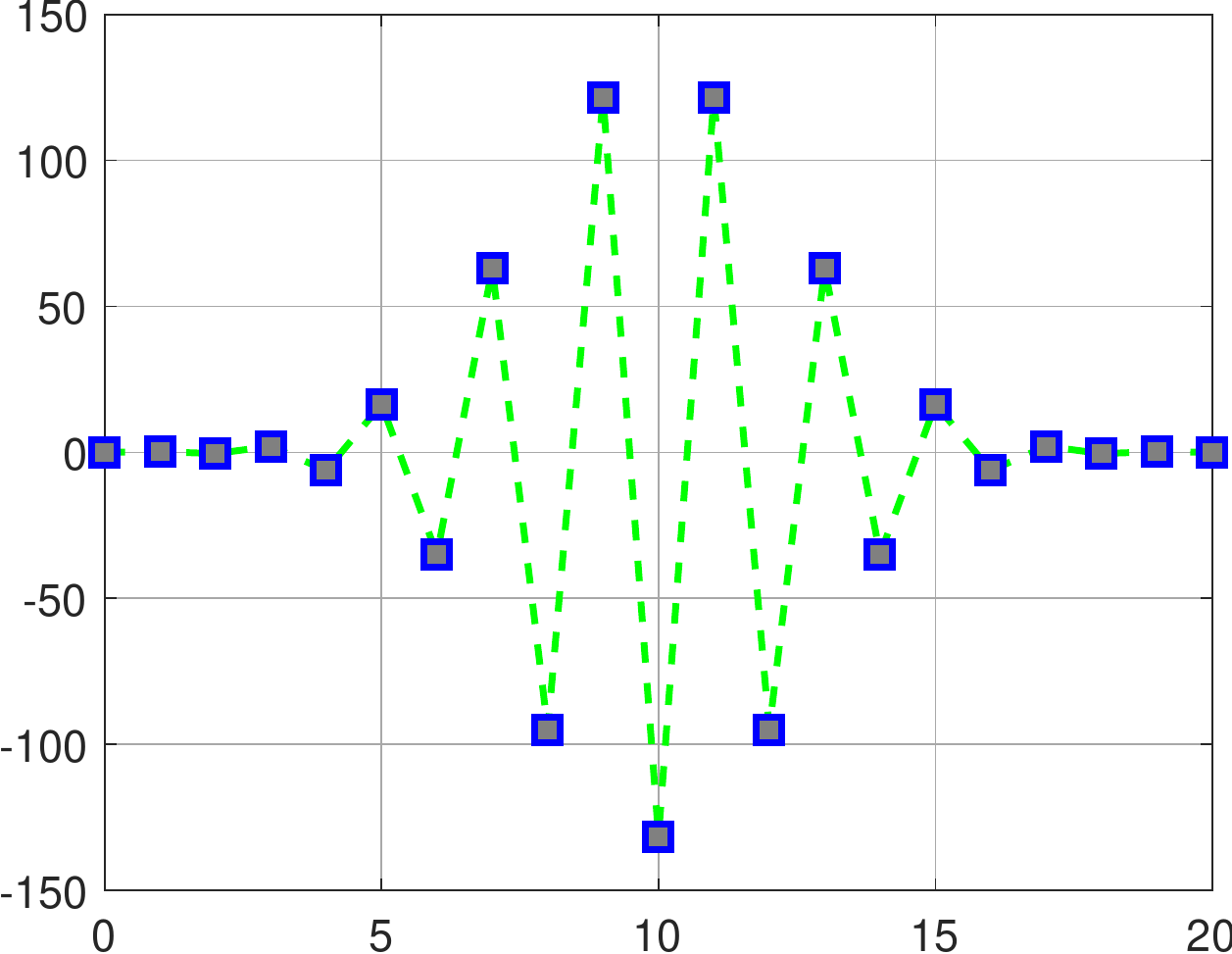}
		\caption*{$\alpha = 0.20$}
	\end{subfigure}\hfil 
	
	\begin{subfigure}{0.25\textwidth}
		\includegraphics[width=\linewidth]{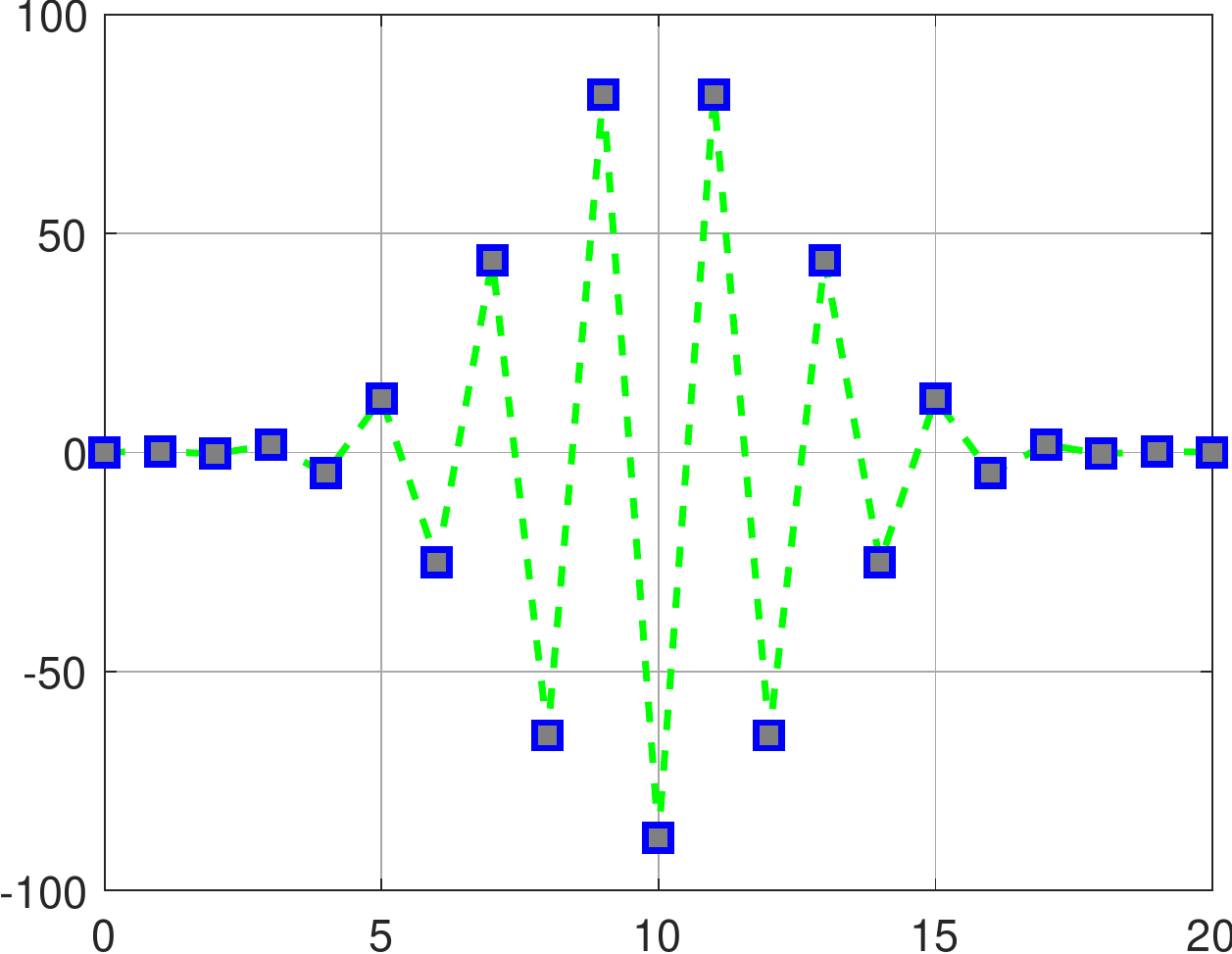}
		\caption*{$\alpha = 0.30$}
	\end{subfigure}\hfil 
	\begin{subfigure}{0.25\textwidth}
		\includegraphics[width=\linewidth]{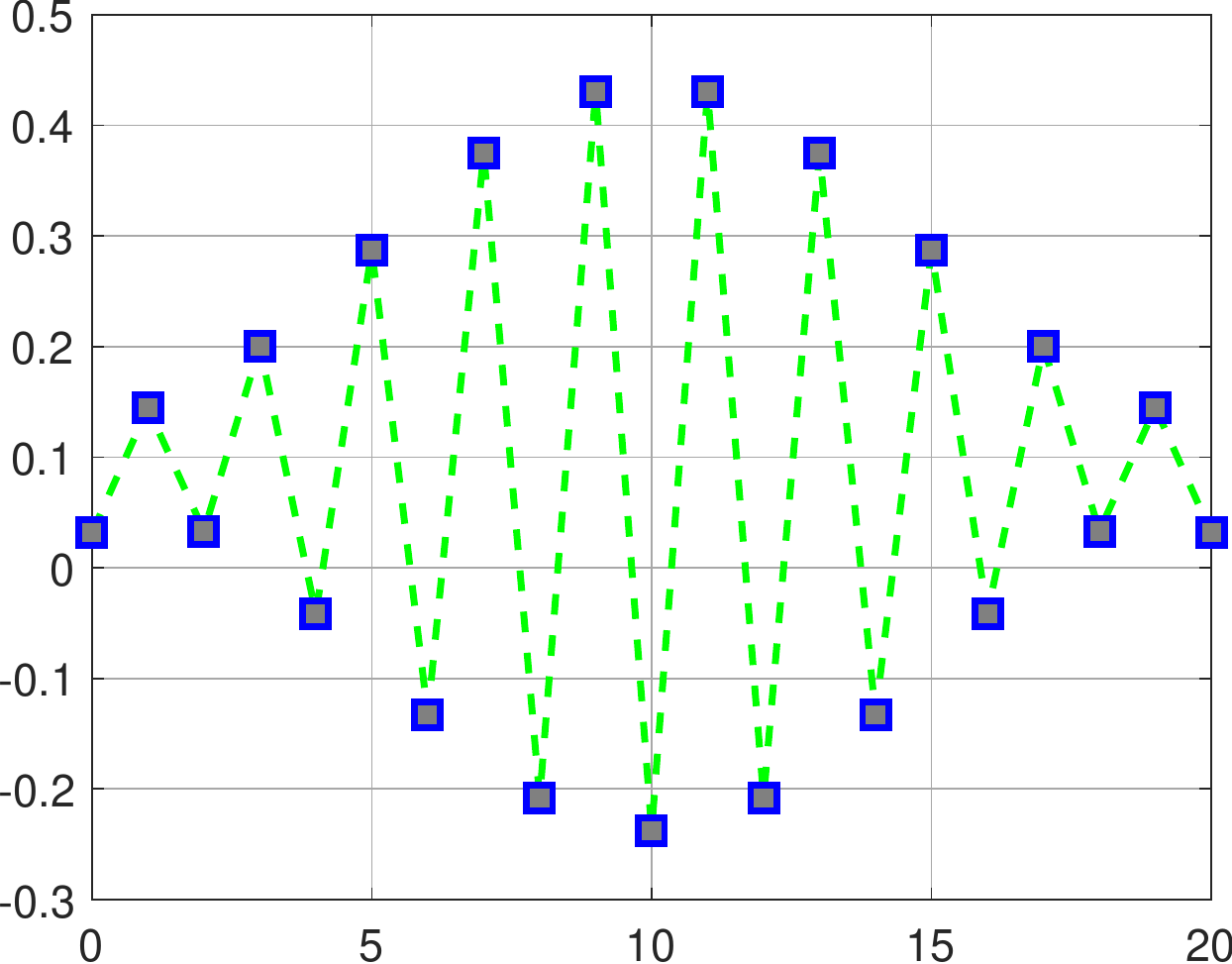}
		\caption*{$\alpha = 0.80$}
	\end{subfigure}\hfil 
	\begin{subfigure}{0.25\textwidth}
		\includegraphics[width=\linewidth]{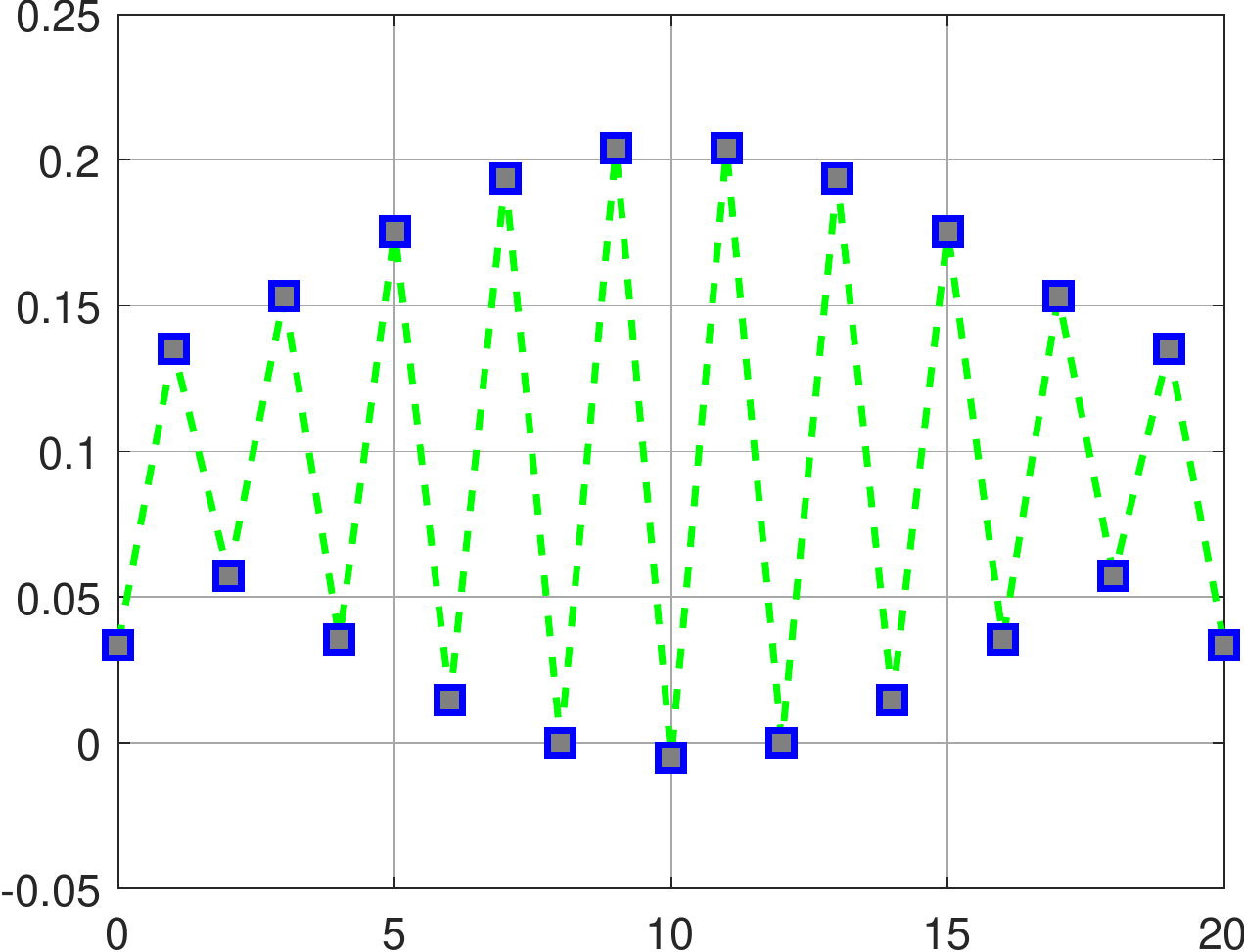}
		\caption*{$\alpha = 0.85$}
	\end{subfigure}\hfil 

	\begin{subfigure}{0.25\textwidth}
		\includegraphics[width=\linewidth]{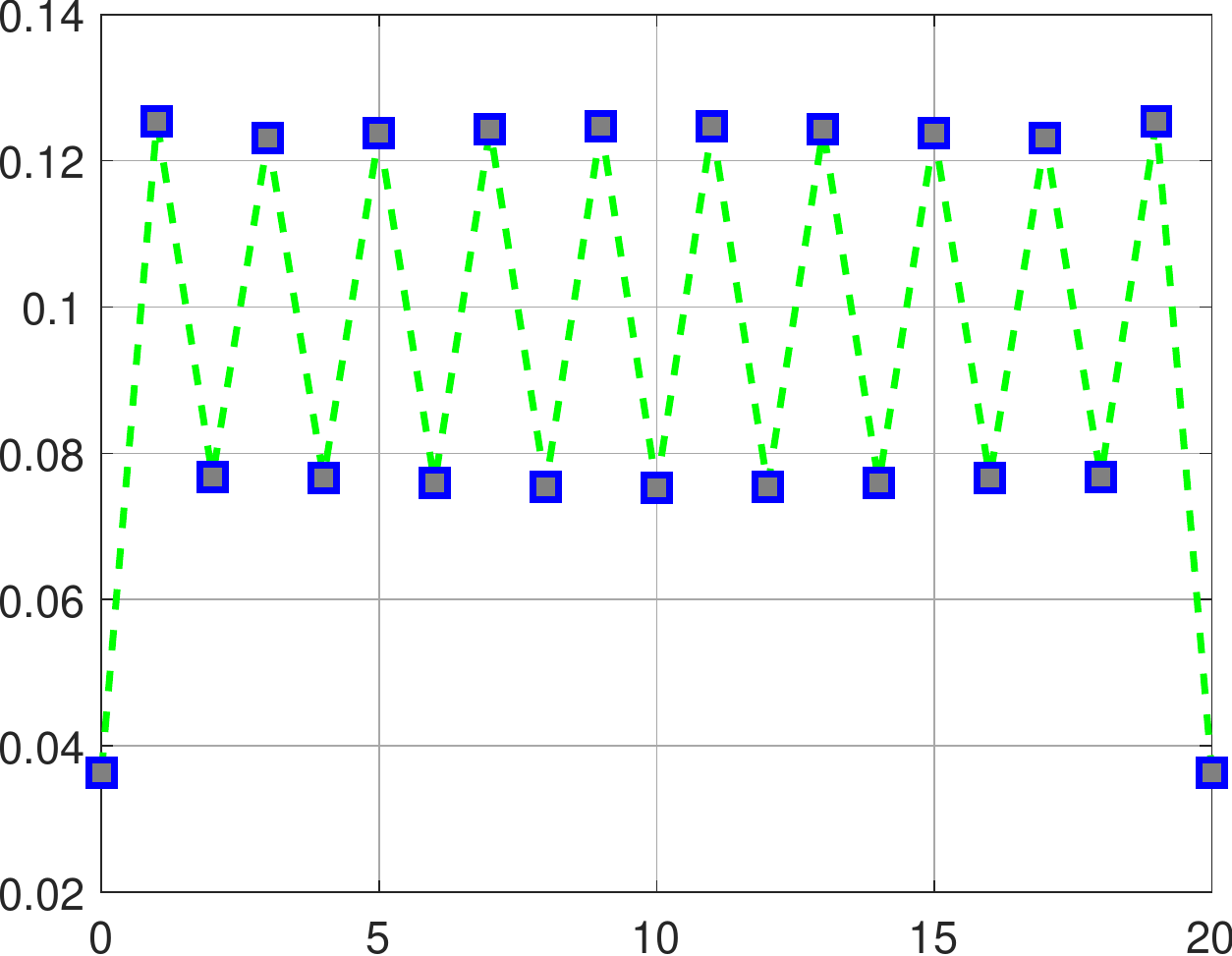}
		\caption*{$\alpha = 0.90$}
	\end{subfigure}\hfil 
	\begin{subfigure}{0.25\textwidth}
		\includegraphics[width=\linewidth]{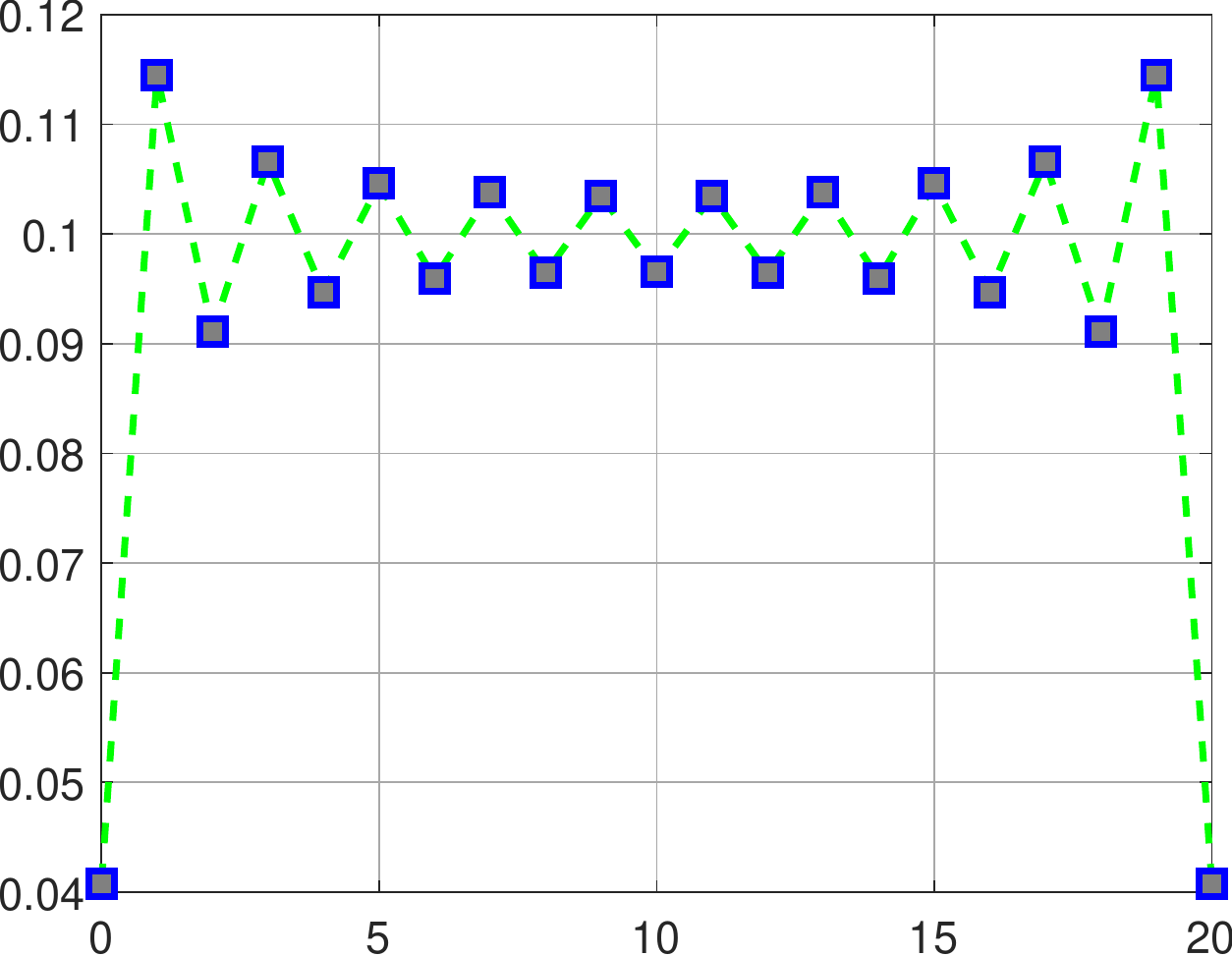}
		\caption*{$\alpha = 0.95$}
	\end{subfigure}\hfil 
	\begin{subfigure}{0.25\textwidth}
		\includegraphics[width=\linewidth]{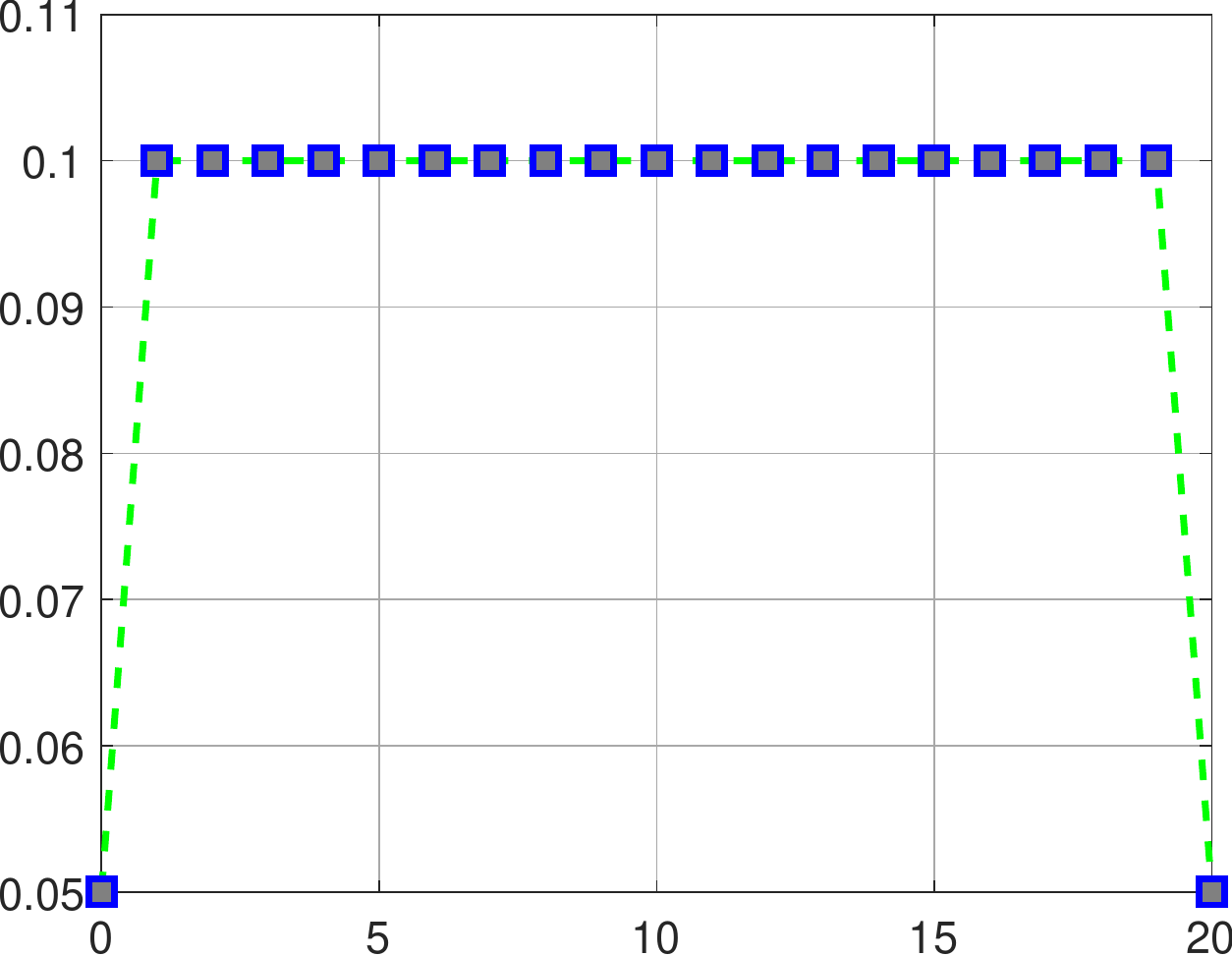}
		\caption*{$\alpha = 1$}
	\end{subfigure}\hfil 
\caption{KTI quadrature weights $ \{ w_{i}^{\alpha} : i=0, ... , 20 \}$ for 21 equispaced nodes in the interval $[-1,1]$ as $\alpha$ varies from $0$ to $1$.}
\label{fig:KTIalpha}
\end{figure}

We use again the notation of the last section, and, in particular, the representation \eqref{eq:intweights} for the interpolatory quadrature weights. For $\alpha \to 0^+$ and $\alpha \to 1^-$, we get the following limit results.

\begin{theorem}
    For the interpolatory KTI quadrature formula, we have the limit relations
	$$
	w_{i}^{\alpha} = \frac{2}{\alpha \pi} \int_{-1}^{1} \ell_{i}^{\alpha}(y) \frac{\sin(\alpha \pi/2)}{\sqrt{1-\sin(\alpha \pi/2)^{2}y^{2}}}\textrm{d}y \, \xrightarrow{\alpha \longrightarrow \, 0^{+}} \int_{-1}^{1} \ell_{i}^{0}(y) \textrm{d}y=\int_{-1}^{1} \ell_{i}(y) \textrm{d}y
	$$
	and
	$$
	w_{i}^{\alpha} = \frac{2 }{\alpha \pi} \int_{-1}^{1} \ell_{i}^{\alpha}(y) \frac{\sin(\alpha \pi/2)}{\sqrt{1-\sin(\alpha \pi/2)^{2}y^{2}}}\textrm{d}y \, \xrightarrow{\alpha \longrightarrow \, 1^{-}} \frac{2}{\pi} \int_{-1}^{1} \ell_{i}^{1}(y) \frac{1}{\sqrt{1-y^{2}}}\textrm{d}y.
	$$
	\label{teo_conv_pesi_quad_1}
\end{theorem}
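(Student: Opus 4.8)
The plan is to treat the integrand $\ell_i^{\alpha}(y)\,K_{\alpha}(y)$, where $K_{\alpha}(y) = \frac{2}{\alpha\pi}\frac{\sin(\alpha\pi/2)}{\sqrt{1-\sin(\alpha\pi/2)^{2}y^{2}}}$ denotes the weight kernel appearing in \eqref{eq:intweights}, as a product of two factors and to analyze the limiting behavior of each separately. The pointwise limits of the kernel are immediate: writing $s=\sin(\alpha\pi/2)$ and using $\frac{\sin(\alpha\pi/2)}{\alpha\pi/2}\to 1$ as $\alpha\to 0^{+}$, one finds $K_{\alpha}(y)\to 1$ in the limit $\alpha\to 0^{+}$, while $s\to 1$ gives $K_{\alpha}(y)\to\frac{2}{\pi}\frac{1}{\sqrt{1-y^{2}}}$ in the limit $\alpha\to 1^{-}$. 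These are exactly the kernels that appear in the two claimed limit integrals, so the entire task reduces to justifying the interchange of limit and integral.

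For the Lagrange factor I would first record that the mapped nodes $z_j=M_{\alpha}(x_j)$ depend continuously on $\alpha$ and, since $M_{\alpha}$ is injective on $I$ for every $\alpha\in[0,1]$, remain pairwise distinct. Hence $z_j\to x_j$ as $\alpha\to 0^{+}$ and $z_j\to M_{1}(x_j)$ as $\alpha\to 1^{-}$, and in both cases the limiting nodes are again distinct. Because the coefficients of the Lagrange polynomial $\ell_i^{\alpha}$ are rational functions of the (distinct) nodes $z_j$, the polynomial $\ell_i^{\alpha}$ converges uniformly on $[-1,1]$ to $\ell_i^{0}=\ell_i$, respectively to $\ell_i^{1}$, and the family $\{\ell_i^{\alpha}\}$ is uniformly bounded in the sup norm for $\alpha$ in a neighbourhood of the limit point.

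The passage to the limit $\alpha\to 0^{+}$ is then routine: the kernel $K_{\alpha}$ converges to $1$ uniformly on $[-1,1]$ (note $s^{2}y^{2}\le s^{2}\to 0$ uniformly), so the whole integrand converges uniformly on the compact interval and the limit may be moved inside the integral, yielding $\int_{-1}^{1}\ell_i(y)\,\mathrm{d}y$. The case $\alpha\to 1^{-}$ is the main obstacle, because the limiting kernel $\frac{2}{\pi}(1-y^{2})^{-1/2}$ blows up at $y=\pm 1$ and the convergence is no longer uniform there. To handle it I would produce an $\alpha$-independent integrable dominating function and invoke dominated convergence. The key is the elementary inequality $\frac{s}{\sqrt{1-s^{2}y^{2}}}\le\frac{1}{\sqrt{1-y^{2}}}$, valid for all $s\in(0,1]$ and $y\in(-1,1)$, which after squaring rearranges to $s^{2}(1-y^{2})\le 1-s^{2}y^{2}$, i.e.\ to $s^{2}\le 1$. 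Combined with the bound $\frac{2}{\alpha\pi}\le\frac{4}{\pi}$ for $\alpha\ge\tfrac12$ and the uniform estimate $\|\ell_i^{\alpha}\|_{\infty}\le C$ from the previous step, this yields $|\ell_i^{\alpha}(y)\,K_{\alpha}(y)|\le\frac{4C}{\pi}(1-y^{2})^{-1/2}$, which lies in $L^{1}[-1,1]$. Dominated convergence then permits exchanging the limit with the integral and produces $\frac{2}{\pi}\int_{-1}^{1}\ell_i^{1}(y)(1-y^{2})^{-1/2}\,\mathrm{d}y$, as claimed.
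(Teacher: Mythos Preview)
Your proof is correct and follows essentially the same route as the paper: both arguments rest on dominated convergence for the limit $\alpha\to 1^{-}$, using the kernel inequality $\sin(\alpha\pi/2)/\sqrt{1-\sin^{2}(\alpha\pi/2)\,y^{2}}\le 1/\sqrt{1-y^{2}}$ together with a uniform sup-norm bound on the Lagrange polynomials $\ell_{i}^{\alpha}$. The only presentational differences are that you obtain the Lagrange bound by a clean continuity/compactness argument on the node positions (the paper instead derives explicit lower bounds on the node spacings via trigonometric identities and the mean value theorem), and you handle the easier limit $\alpha\to 0^{+}$ by uniform convergence of the integrand rather than by a second application of dominated convergence.
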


\begin{proof}
We show the two limit relations by using Lebesgue's dominated convergence theorem.  
We first observe that
$$
|y-M_{\alpha}(x_{j})| \leq 2, \quad y \in [-1,1]
$$
and
$$
\sin(\alpha \pi/2)^{2} y^{2} \leq y^{2} \; \Leftrightarrow \; 1-\sin(\alpha \pi/2)^{2}y^{2} \geq 1-y^{2} \; \Leftrightarrow \;  \frac{1}{\sqrt{1-y^{2}}} \geq \frac{1}{\sqrt{1-\sin(\alpha \pi/2) y^{2}}}.
$$
We are now looking for an upper bound of $|\ell^{\alpha}_{i}|$ for $\alpha \longrightarrow 1^{-}.$ Elementary properties of sine and cosine give the inequality
\begin{equation*}
	\begin{split}
		 & \sin \Big(\frac{\alpha \pi}{2}x_{j+1}\Big)-\sin\Big(\frac{\alpha \pi}{2}x_{j}\Big) = 2 \sin\Big(\frac{\alpha \pi}{2}\frac{(x_{j+1}-x_{j})}{2}\Big) \cos\Big(\frac{\alpha \pi}{2}\frac{(x_{j+1}+x_{j})}{2}\Big) \geq \\
		 & \geq 2 \sin\Big(\frac{\bar{\alpha} \pi}{2}\frac{x_{j+1} - x_j}{2}\Big) \cos\Big(\frac{\pi}{2}\frac{(x_{j+1}+x_{j})}{2}\Big)\quad \textrm{for } 1 \geq \alpha \geq \bar{\alpha} \geq 0.
	\end{split}
\end{equation*}
As $x_{j+1} - x_j > 0$ and $x_{j+1} + x_j < 2$ for all $j = 0, \ldots, m-1$, we can thus find an $\epsilon > 0$ such that
\[\sin \Big(\frac{\alpha \pi}{2}x_{j+1}\Big)-\sin\Big(\frac{\alpha \pi}{2}x_{j}\Big) \geq \epsilon \quad \text{for $\alpha \geq \bar{\alpha}$}.\]
We therefore get the bound
$$
 |\ell^{\alpha}_{i}(y) | = \prod_{\substack{j = 0 \\ j\neq i}}^{m} \frac{|y-M_{\alpha}(x_{j})|}{|M_{\alpha}(x_{i})-M_{\alpha}(x_{j})|} \leq \frac{2^{m}}{\epsilon^{m}}, \qquad \alpha \geq \bar{\alpha},
$$
and therefore
$$
| \ell^{\alpha}_{i}(y) | \frac{1}{\sqrt{1-\sin(\alpha \pi/2)^{2} y^{2}}} \leq \frac{2^{m}}{\epsilon^{m}} \frac{1}{\sqrt{1-y^{2}}} \in L^{1}(I), \qquad \alpha \geq \bar{\alpha}.
$$
Similarly, we are also looking for an upper bound of $|\ell^{\alpha}_{i}|$ for $\alpha \longrightarrow 0^{+}$. Using the mean value theorem we obtain
$$
|M_{\alpha}(x_{j+1})-M_{\alpha}(x_{j})| = \left| \frac{\sin(\frac{\alpha \pi}{2}x_{j+1})-\sin(\frac{\alpha \pi}{2}x_{j})}{\sin(\frac{\alpha \pi}{2})} \right| = \frac{|\cos(\zeta)| \frac{\alpha \pi}{2} (x_{j+1}-x_{j})}{\sin(\frac{\alpha \pi}{2})},
$$
where $-\alpha \pi / 2 \leq \zeta \leq \alpha \pi / 2.$ Continuing the computations, we get the lower bound
$$
 \geq \frac{|\cos(\alpha \pi / 2)| \frac{\alpha \pi}{2} (x_{j+1}-x_{j})}{\sin(\frac{\alpha \pi}{2})} \geq |\cos(\alpha \pi / 2)| (x_{j+1}-x_{j}) \geq |\cos(\bar{\alpha} \pi / 2)| (x_{j+1}-x_{j}) \geq \epsilon
$$
for $0 \leq \alpha \leq \bar{\alpha} \leq 1$, where the second inequality arises from $|\sin(x)| \leq |x|$ for all $x \in \mathbb{R}$. 
We therefore get also in this second case that
$$
| \ell^{\alpha}_{i}(y) | \frac{1}{\sqrt{1-\sin(\alpha \pi/2)^{2} y^{2}}} \leq \frac{2^{m}}{\epsilon^{m}} \frac{1}{\sqrt{1-y^{2}}} \in L^{1}(-1,1), \qquad \alpha \leq \bar{\alpha}.
$$
Lebesgue's dominated convergence theorem now guarantees that we can pass to the limit inside the integral for $\alpha \longrightarrow 1^{-}$ and $\alpha \longrightarrow 0^{+}$. This immediately proves the statement.
\end{proof}
Based on this theorem we obtain more precise results for \we{certain types of nodes.}
If the starting nodes $\mathcal{X}$ are equispaced, the resulting KTI quadrature rules for $\alpha \to 1^-$ turn out to be particular composite Newton-Cotes formulas. As we have $M_{1}(x)=\sin(\pi / 2 x)$, we get for instance
$$
\sin\Big(\frac{\pi}{2} \Big(-1 +\frac{2i}{m}\Big)\Big) = \sin\Big(-\frac{\pi}{2} +\frac{\pi i}{m}\Big) = - \cos\Big(\frac{\pi i}{m}\Big), \quad i=0,...,m
$$
the Chebyshev-Lobatto nodes as mapped nodes, and the limits $w_{i}^{\alpha}$ for $\alpha \longrightarrow 1^{-}$ are the composite trapezoidal rule weights in $[-1,1]$ (Example \ref{form_trap_teo}).\\
On the other hand, if the starting nodes are the equidistant nodes $\{ x_{k} = -1+(2k+1)/(m+1), \, k=0,\ldots,m\}$ then the limits of the weights $w_{i}^{\alpha}$ for $\alpha \longrightarrow 1^{-}$ are the composite midpoint rule weights in $[-1,1]$ (Example \ref{form_mid_teo}).

\wee{In Fig. \ref{fig:KTIalpha}, the KTI weights on $21$ equidistant nodes for a varying parameter $\alpha$ are illustrated.} For $\alpha = 0$, we get the closed Newton-Cotes quadrature weights, whilst with $\alpha = 1$ we get the weights of the composite trapezoidal rule. \wee{Well-conditioning of numerical integration can be guaranteed by positive quadrature weights}. Therefore, from a computational point of view, a method parameter $\alpha$ close to $1$ is preferable. In Fig. \ref{fig_pesi_137} we can see that by increasing the number of nodes and if $ \alpha \nrightarrow \, 1^{-}$, then some of the weights get negative. This inevitably leads to numerical instability.

\begin{figure}[h]
	\begin{subfigure}{0.25\textwidth}
		\includegraphics[width=\linewidth]{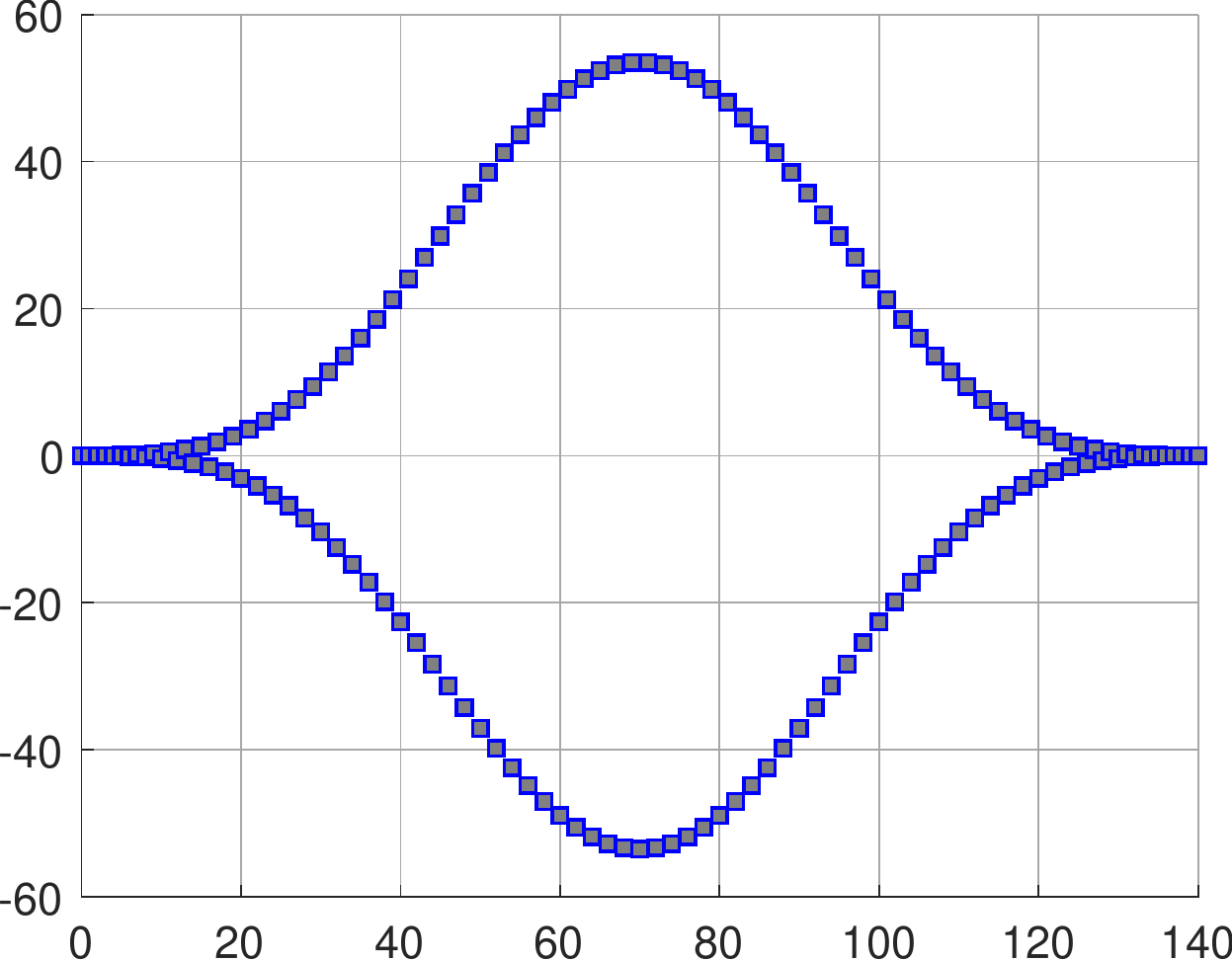}
		\caption*{$\alpha = 0.96$}
	\end{subfigure}\hfil 
	\begin{subfigure}{0.25\textwidth}
		\includegraphics[width=\linewidth]{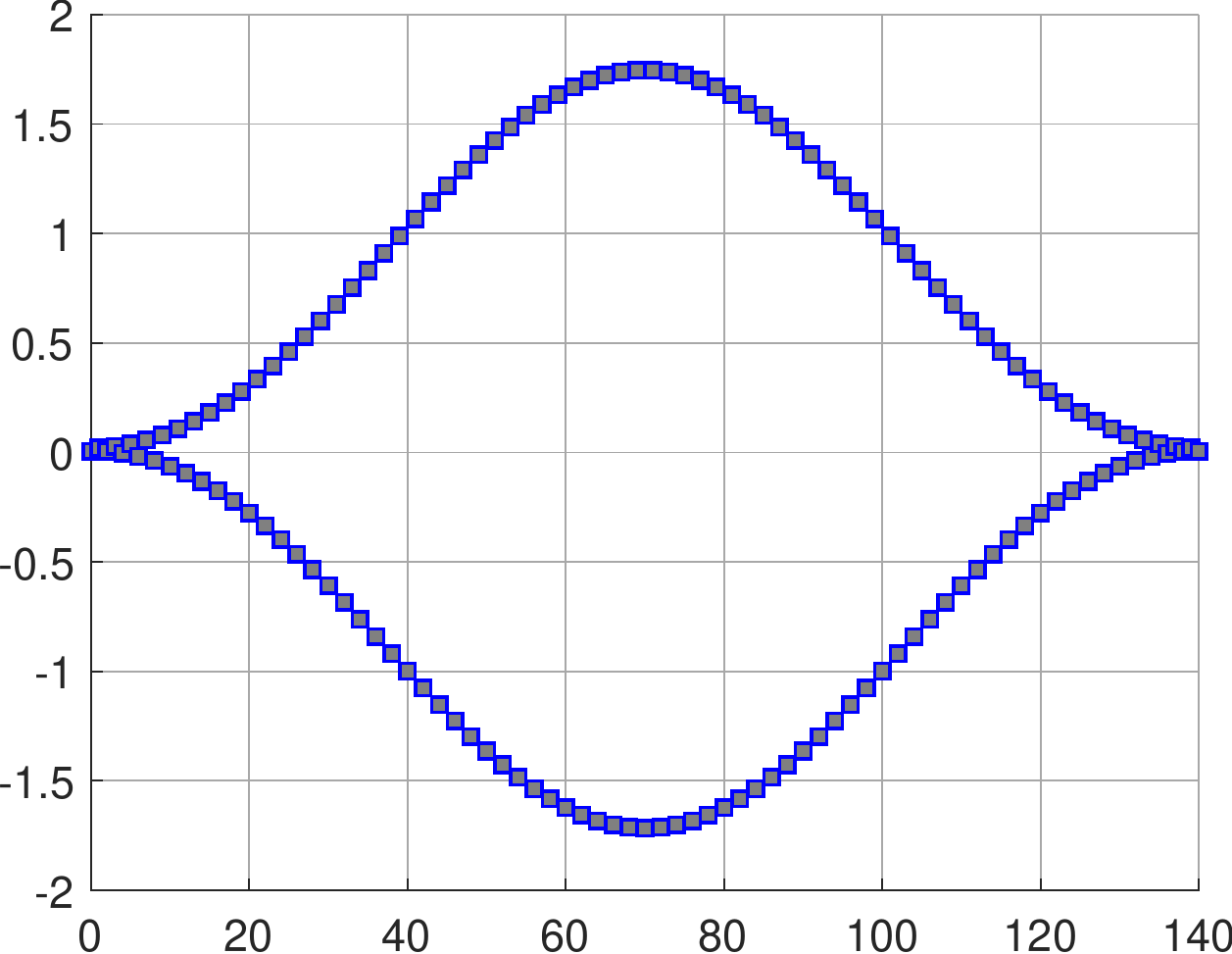}
		\caption*{$\alpha = 0.97$}
	\end{subfigure}\hfil 
	\begin{subfigure}{0.25\textwidth}
		\includegraphics[width=\linewidth]{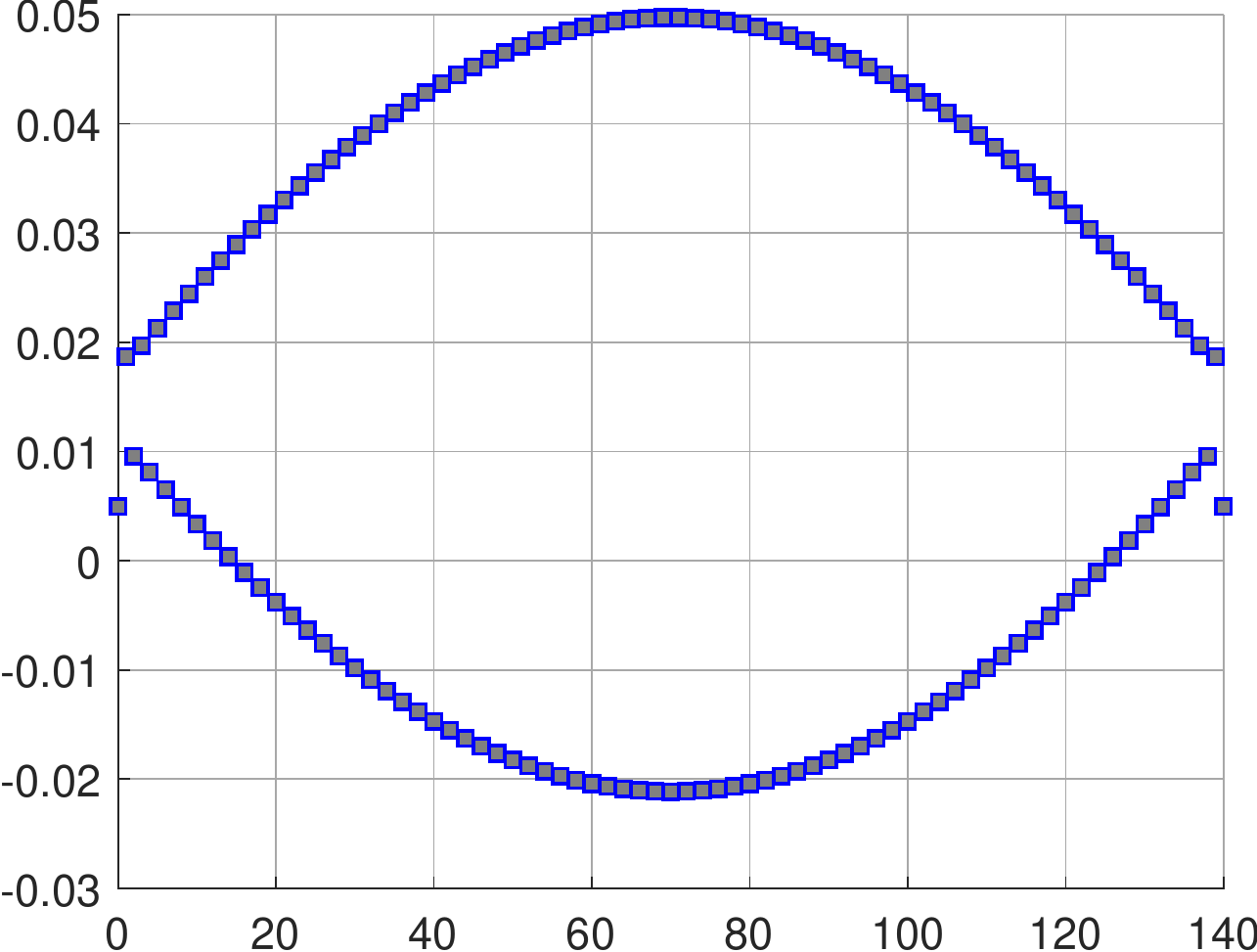}
		\caption*{$\alpha = 0.98$}
	\end{subfigure}\hfil 
	\begin{subfigure}{0.25\textwidth}
		\includegraphics[width=\linewidth]{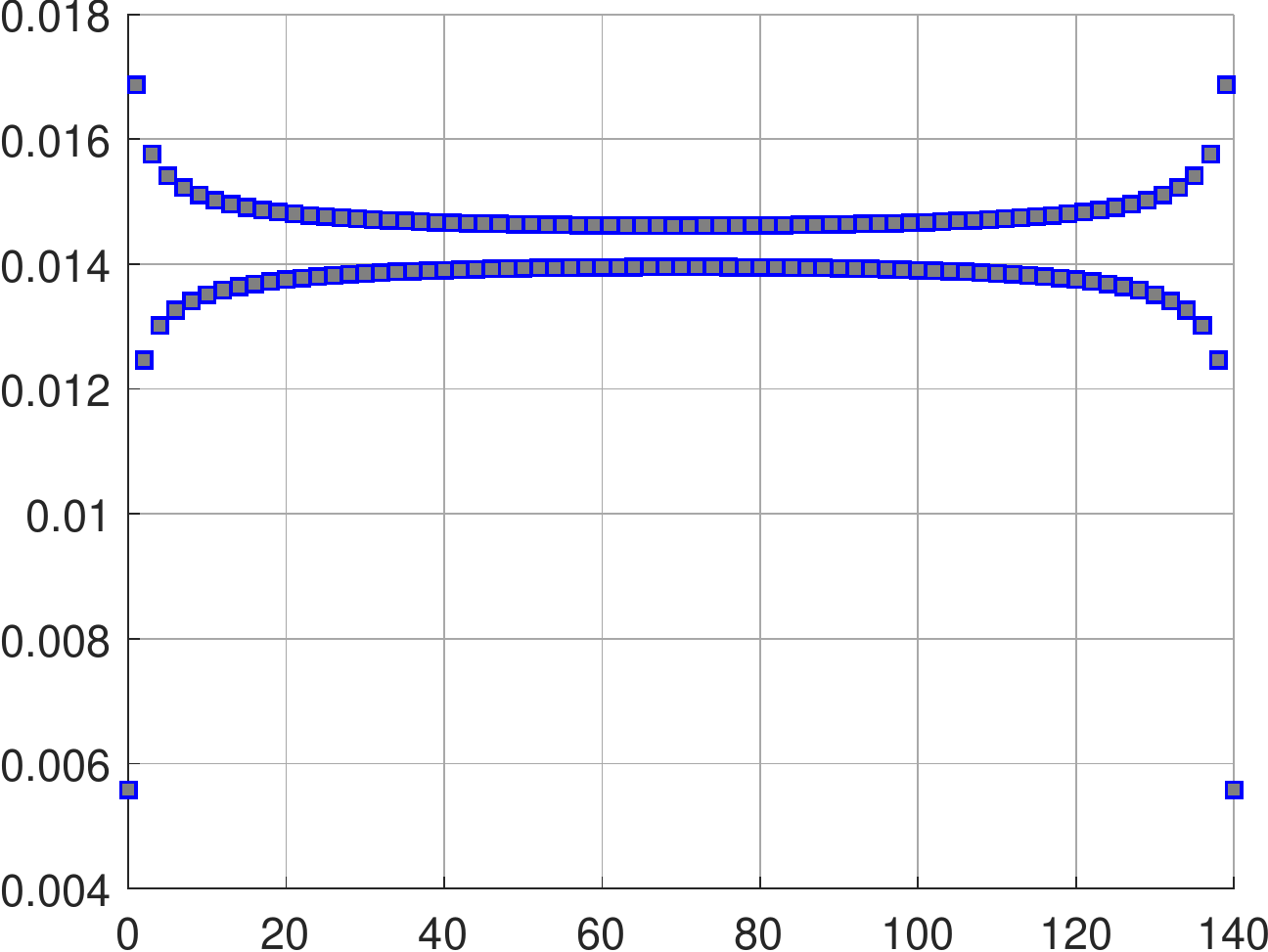}
		\caption*{$\alpha = 0.99$}
	\end{subfigure}\hfil 
\caption{\wee{KTI Quadrature weights $ \{ w_{i}^{\alpha} : i=0, ... , 140 \}$ for 141 equispaced nodes in the interval $[-1,1]$ and the mapping parameter $\alpha$ close to one.}}
\label{fig_pesi_137}
\end{figure}

\begin{figure}[h]
    \begin{subfigure}{0.25\textwidth}
		\includegraphics[width=\linewidth]{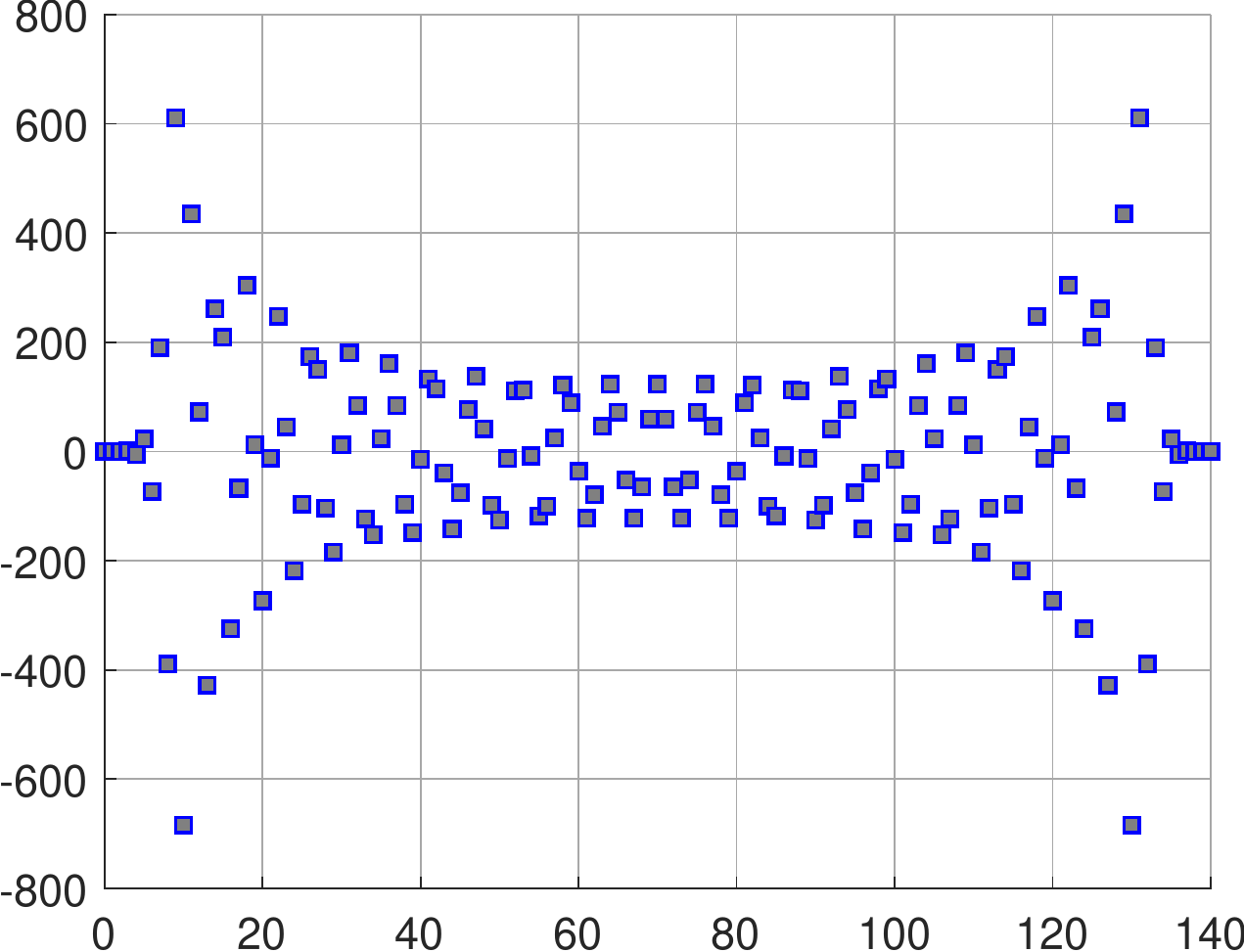}
		\caption*{$\alpha = 0.20$}
	\end{subfigure}\hfil 
	\begin{subfigure}{0.25\textwidth}
		\includegraphics[width=\linewidth]{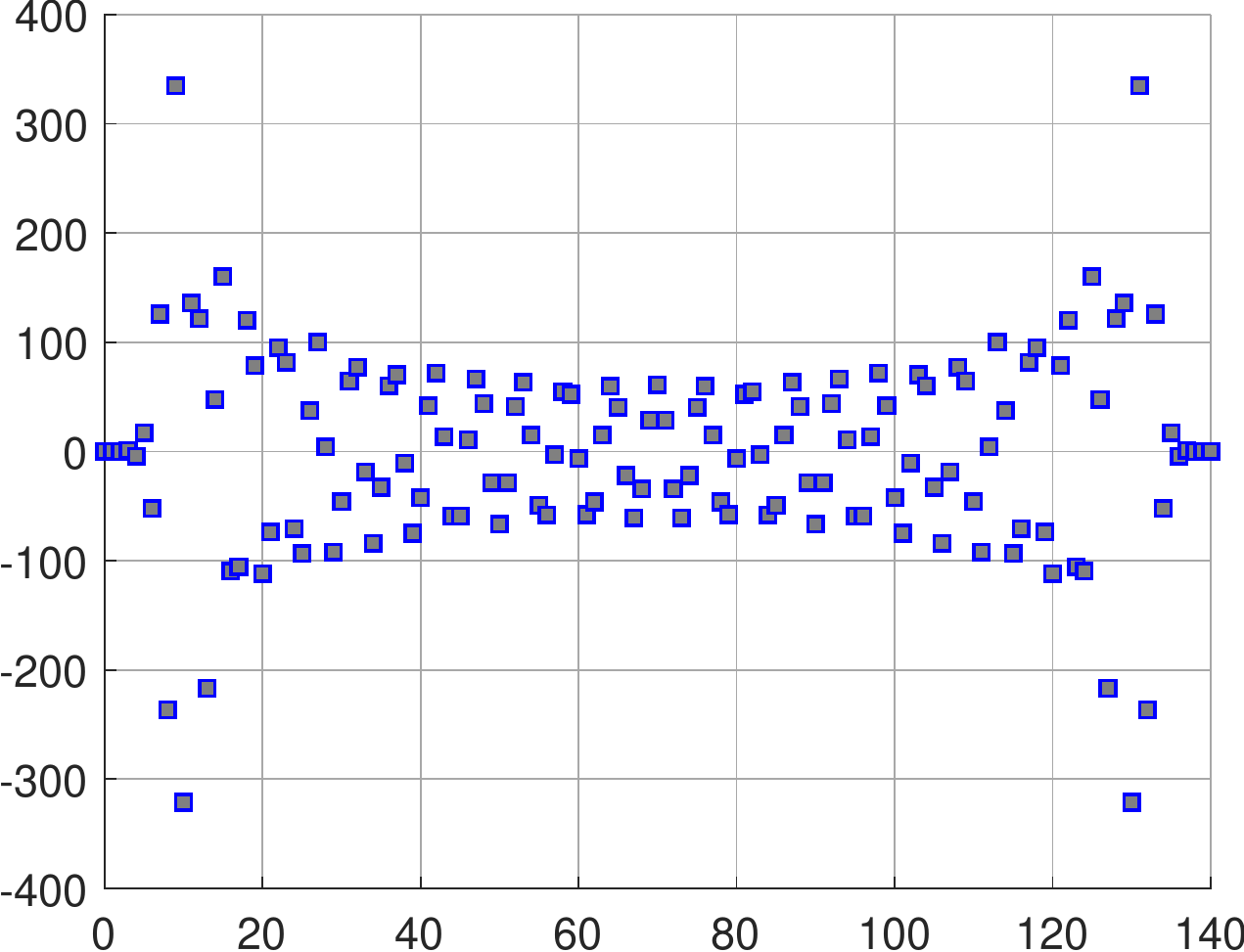}
		\caption*{$\alpha = 0.30$}
	\end{subfigure}\hfil 
	\begin{subfigure}{0.25\textwidth}
		\includegraphics[width=\linewidth]{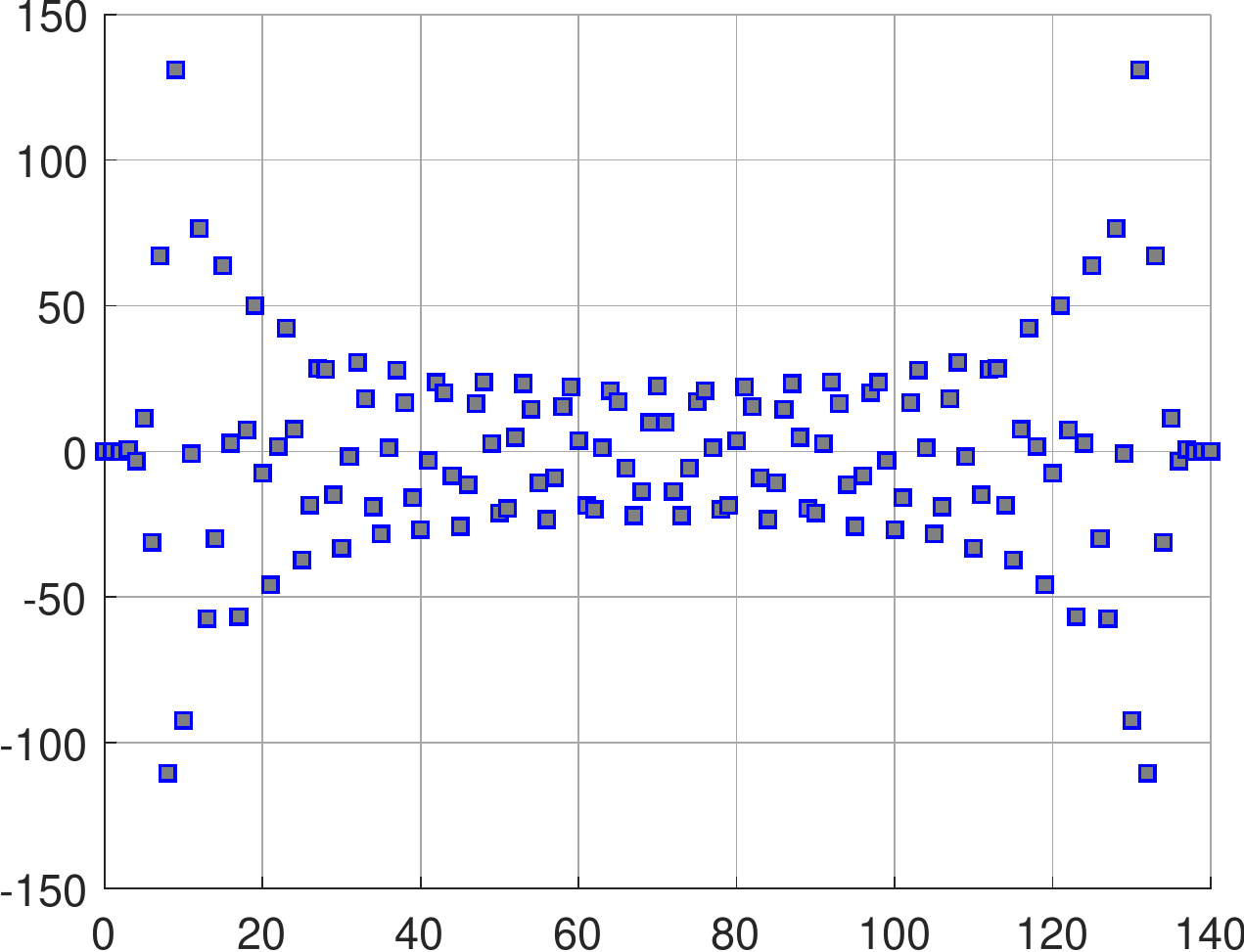}
		\caption*{$\alpha = 0.40$}
	\end{subfigure}\hfil 
	\begin{subfigure}{0.25\textwidth}
		\includegraphics[width=\linewidth]{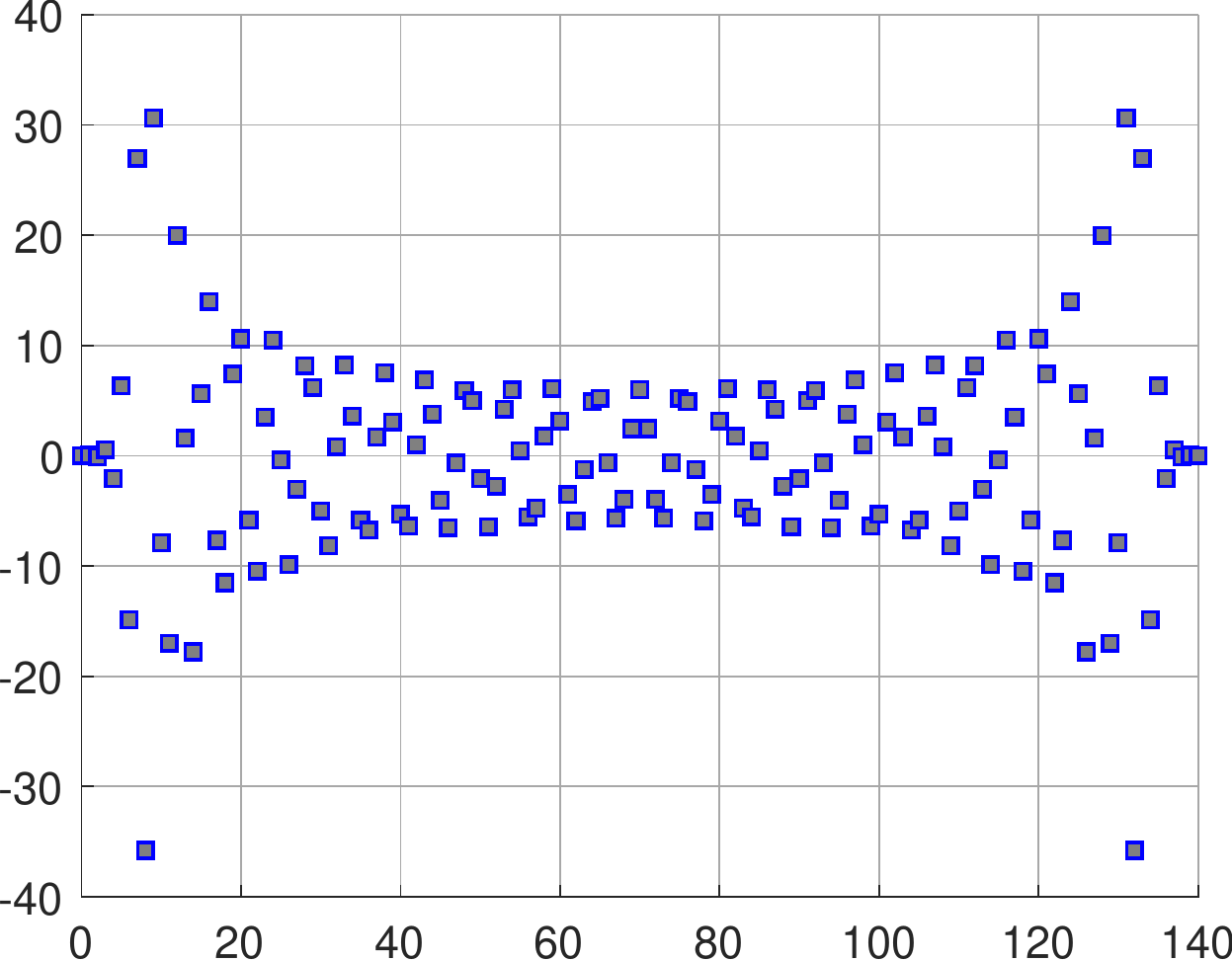}
		\caption*{$\alpha = 0.50$}
	\end{subfigure}\hfil 
    
	\begin{subfigure}{0.25\textwidth}
		\includegraphics[width=\linewidth]{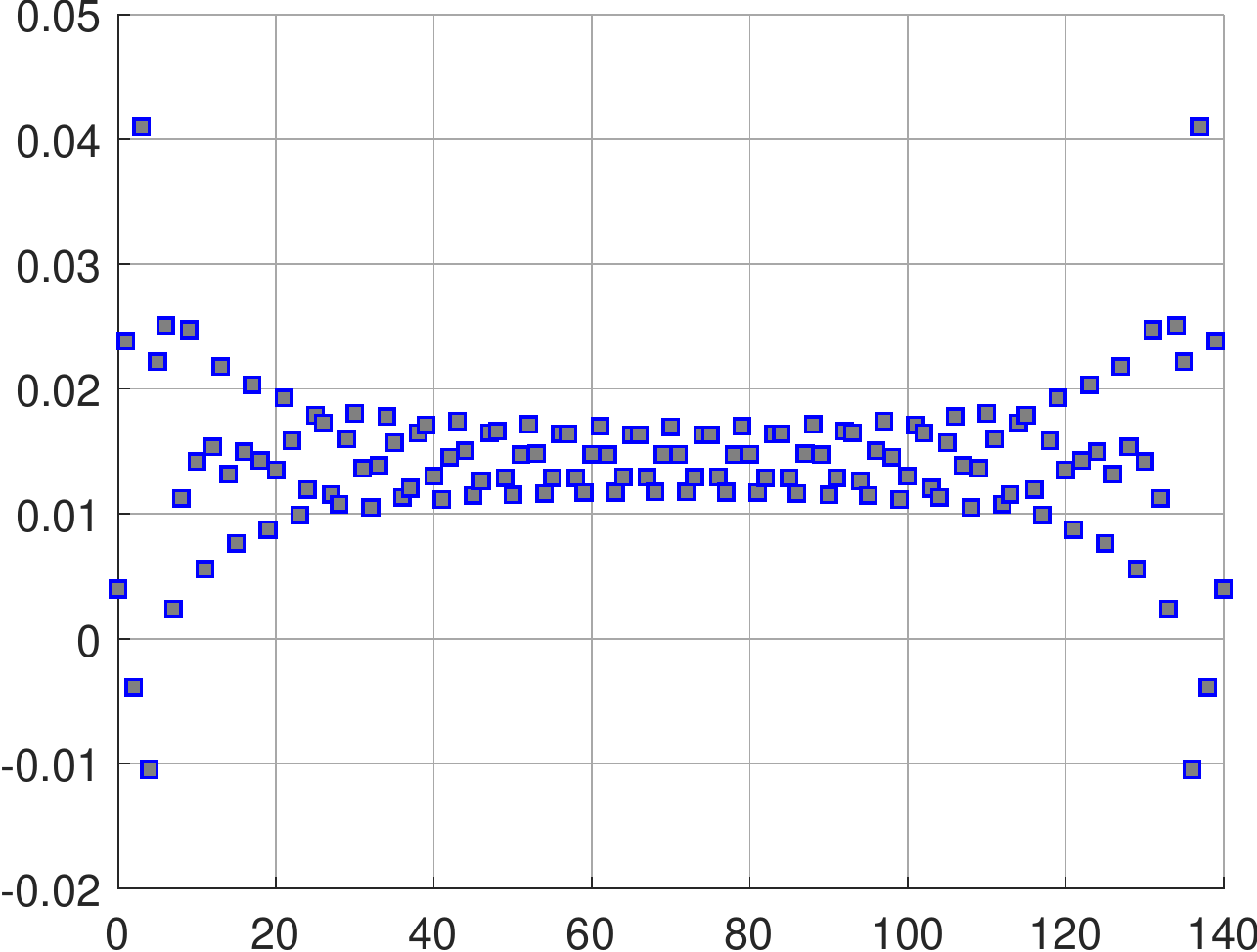}
		\caption*{$\alpha = 0.85$}
	\end{subfigure}\hfil 
	\begin{subfigure}{0.25\textwidth}
		\includegraphics[width=\linewidth]{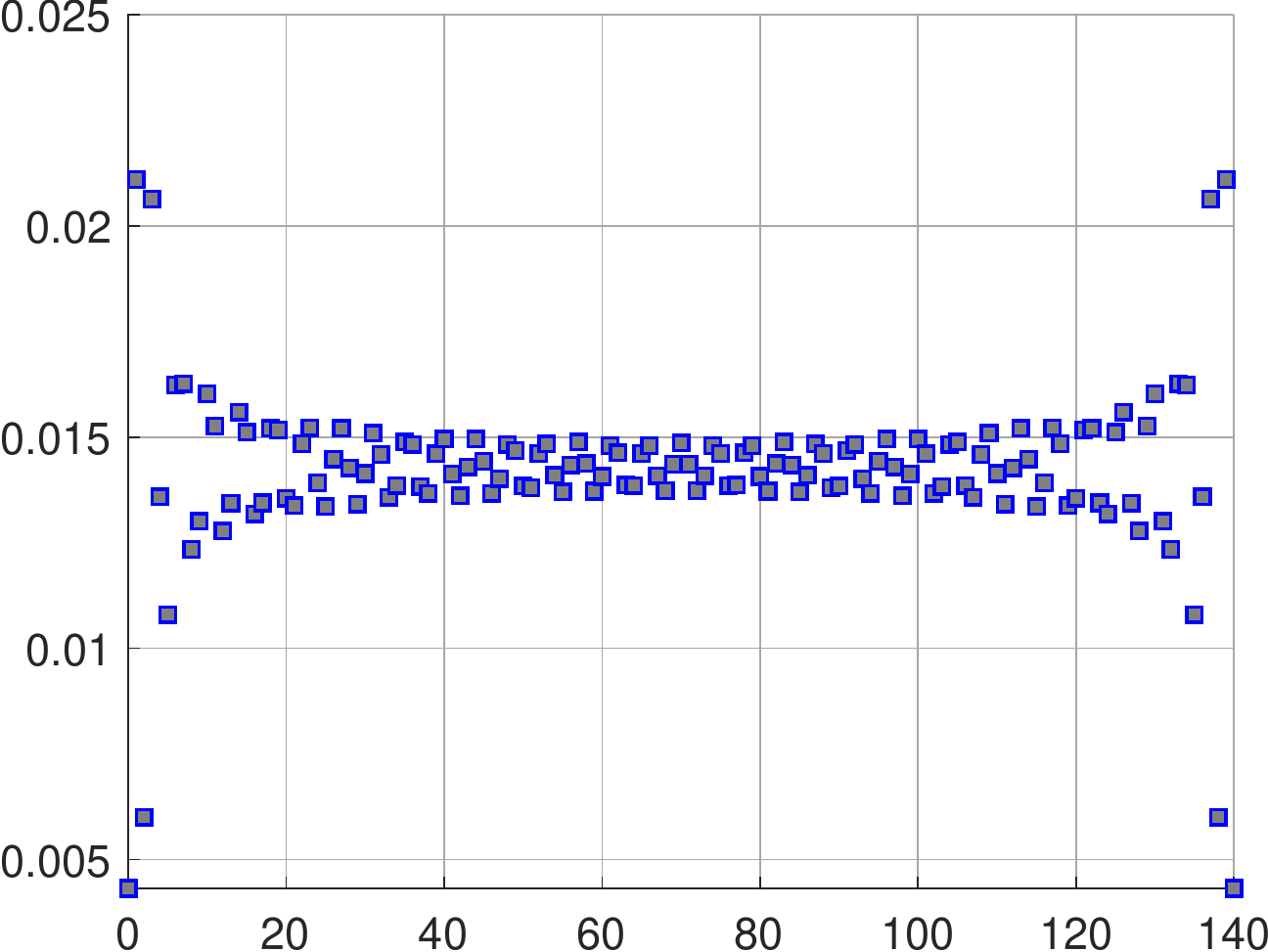}
		\caption*{$\alpha = 0.90$}
	\end{subfigure}\hfil 
	\begin{subfigure}{0.25\textwidth}
		\includegraphics[width=\linewidth]{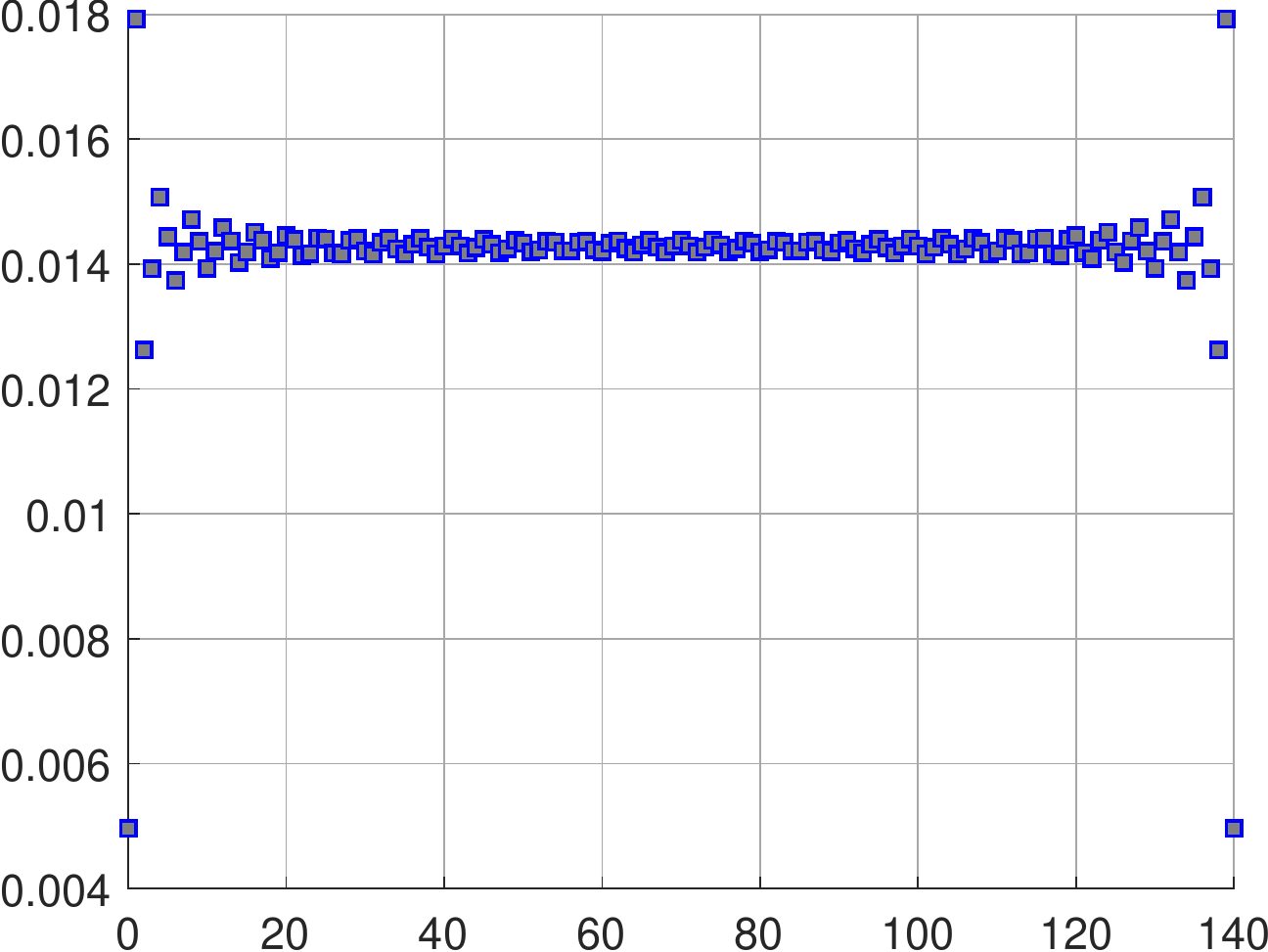}
		\caption*{$\alpha = 0.95$}
	\end{subfigure}\hfil 
	\begin{subfigure}{0.25\textwidth}
		\includegraphics[width=\linewidth]{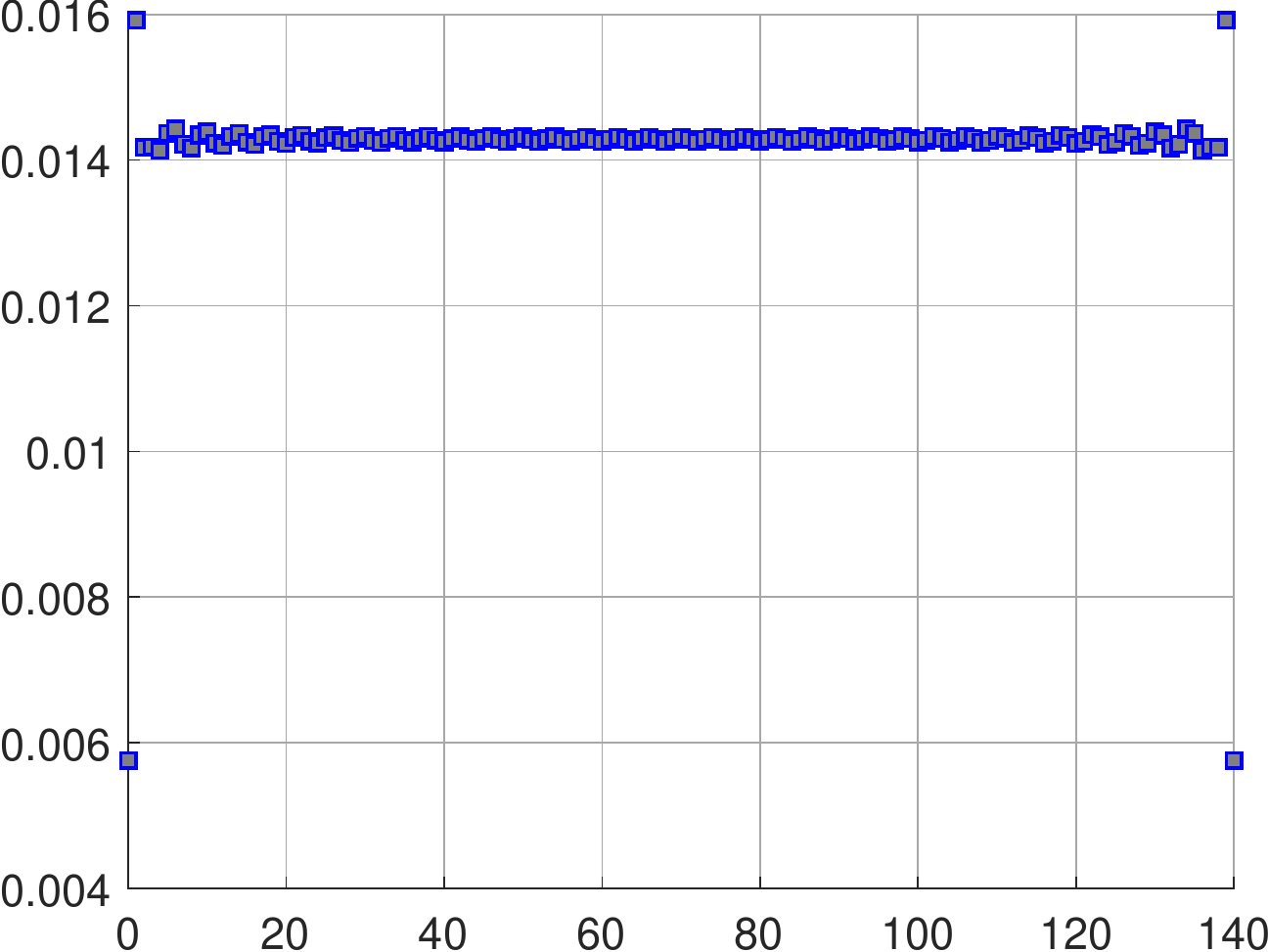}
		\caption*{$\alpha = 0.98$}
	\end{subfigure}\hfil 
\caption{\wee{Dependence of the KTL Quadrature weights $ \{ w_{i}^{\alpha} : i=0, ... , 140 \}$ on the parameter $\alpha$ for 141 equispaced nodes in the interval $[-1,1]$. Using $\textrm{dim}(\mathbb{P}_{70}^{\alpha})=71$, we have the ratio $n/m = 0.5$. }}
\label{fig_pesi_140old_KTL}
\end{figure}

\begin{figure}[h]
    \begin{subfigure}{0.25\textwidth}
		\includegraphics[width=\linewidth]{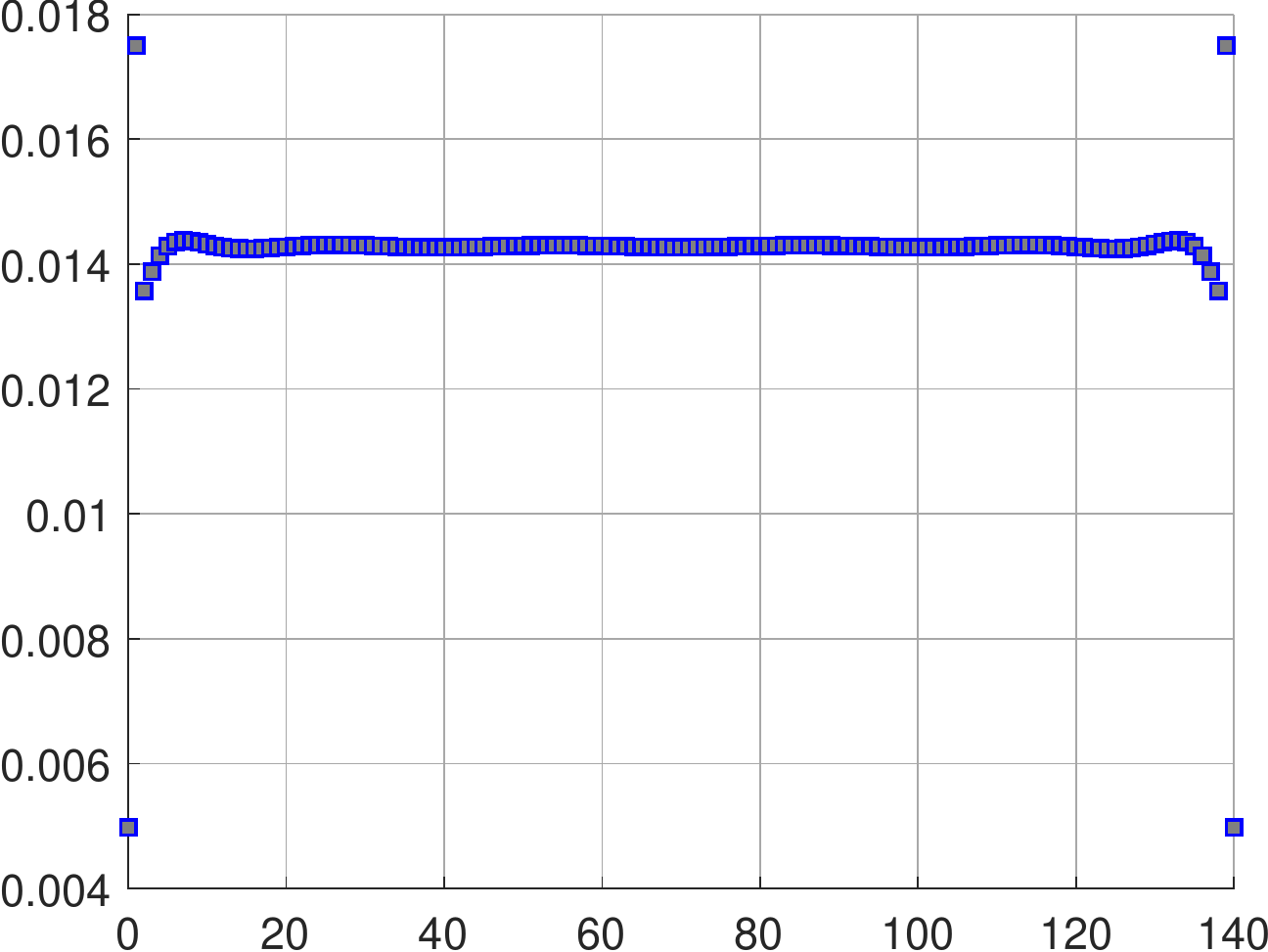}
		\caption*{$\alpha = 0.20$}
	\end{subfigure}\hfil 
	\begin{subfigure}{0.25\textwidth}
		\includegraphics[width=\linewidth]{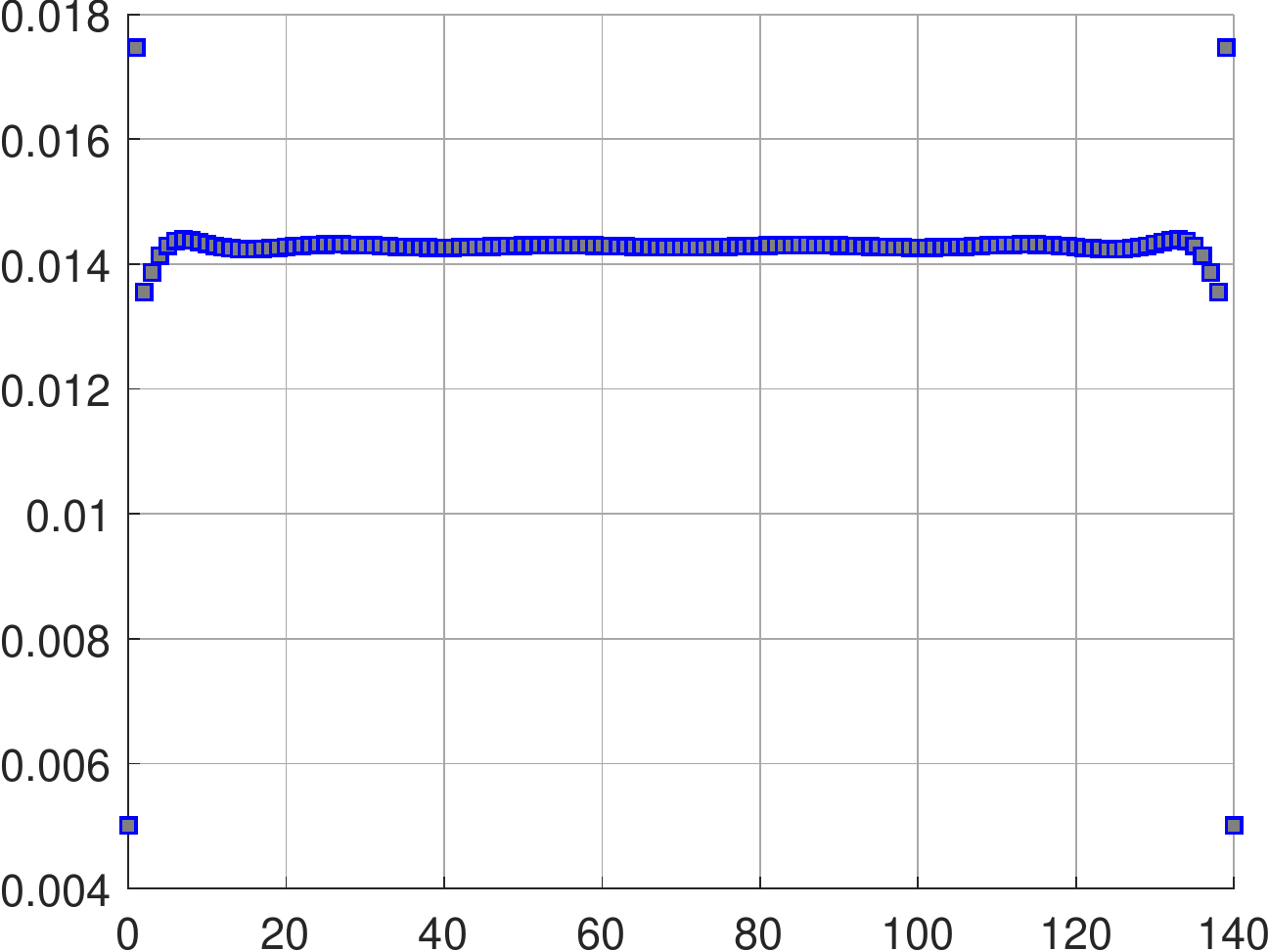}
		\caption*{$\alpha = 0.30$}
	\end{subfigure}\hfil 
	\begin{subfigure}{0.25\textwidth}
		\includegraphics[width=\linewidth]{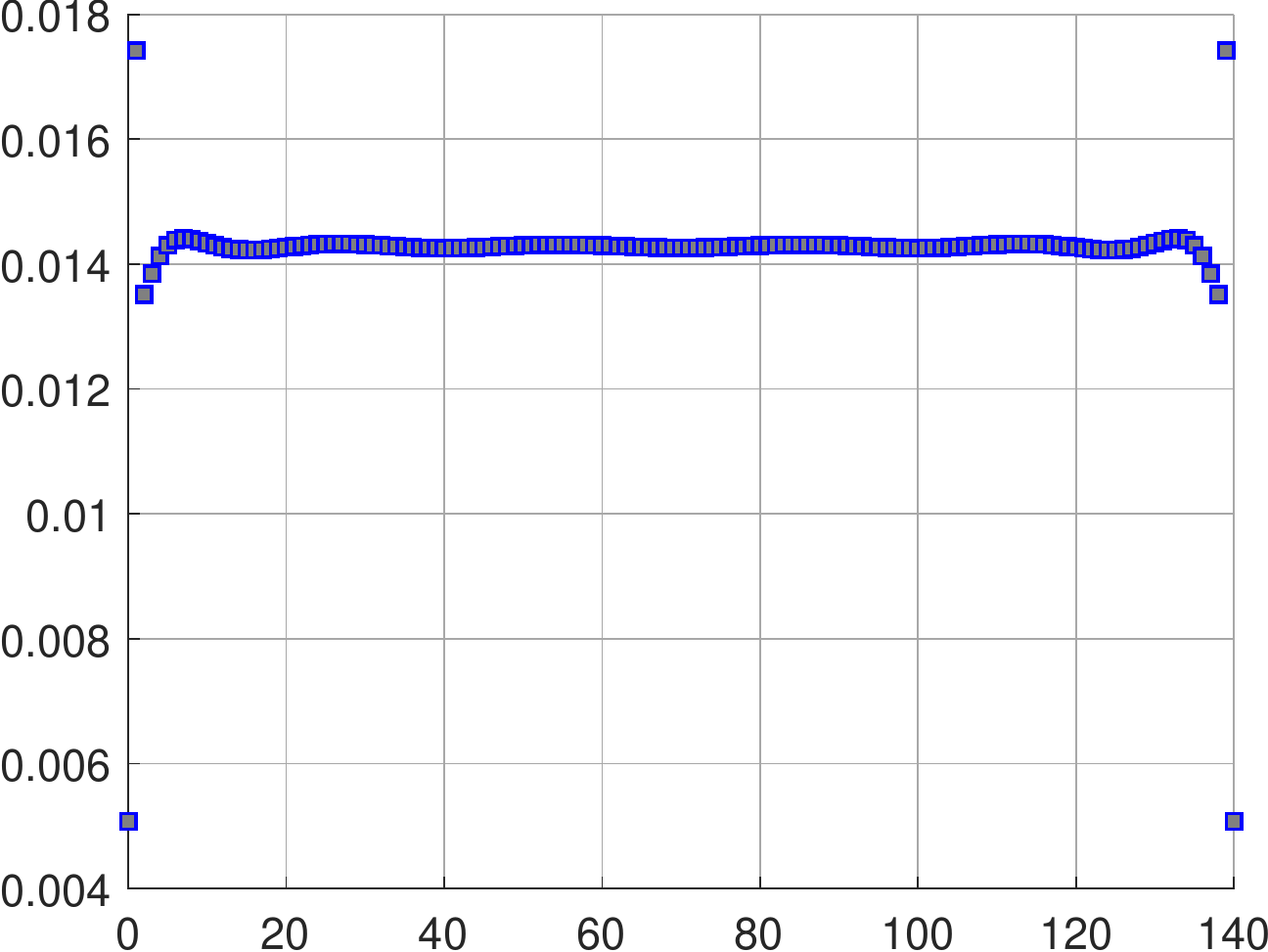}
		\caption*{$\alpha = 0.40$}
	\end{subfigure}\hfil 
	\begin{subfigure}{0.25\textwidth}
		\includegraphics[width=\linewidth]{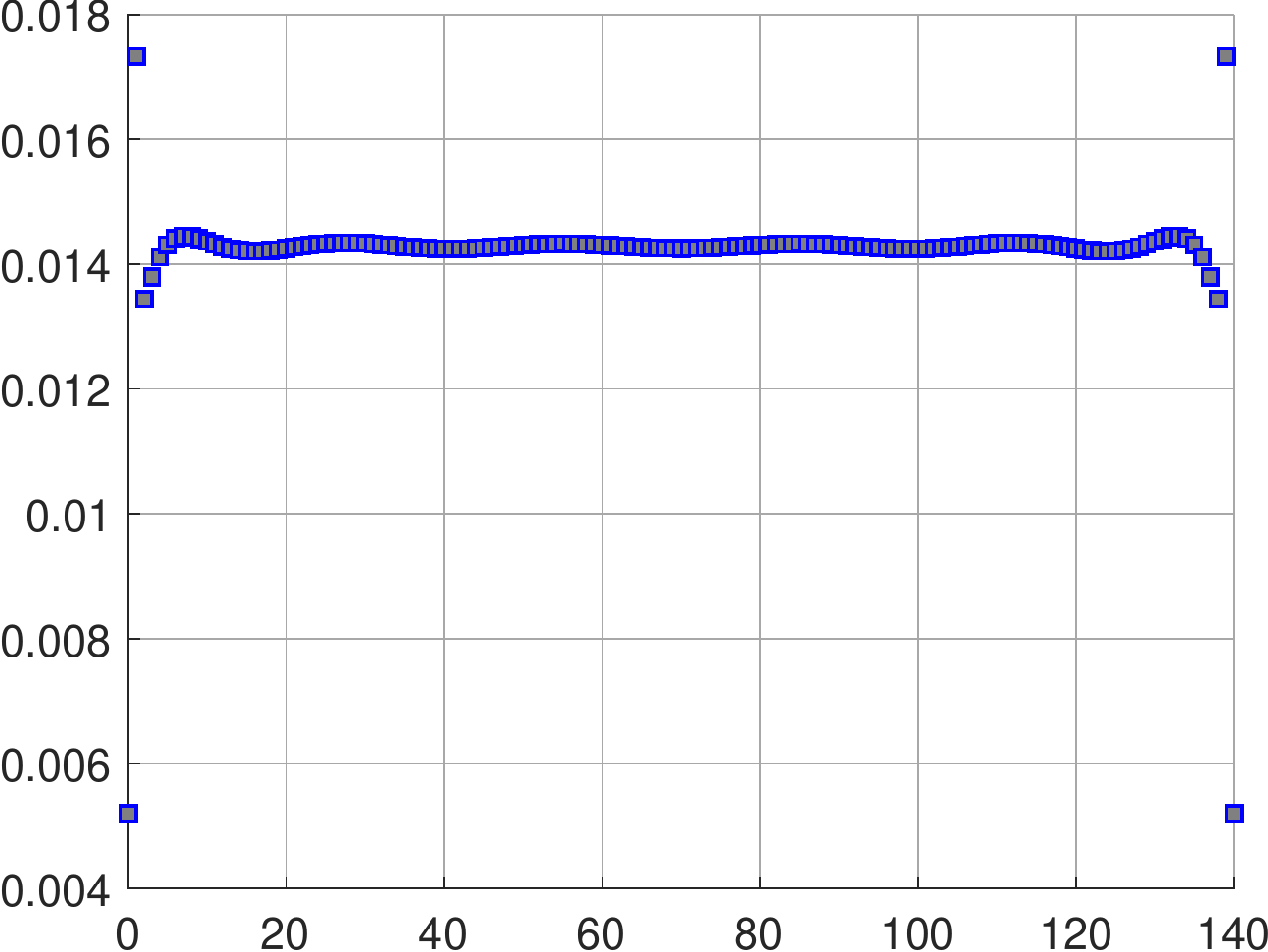}
		\caption*{$\alpha = 0.50$}
	\end{subfigure}\hfil 
    
	\begin{subfigure}{0.25\textwidth}
		\includegraphics[width=\linewidth]{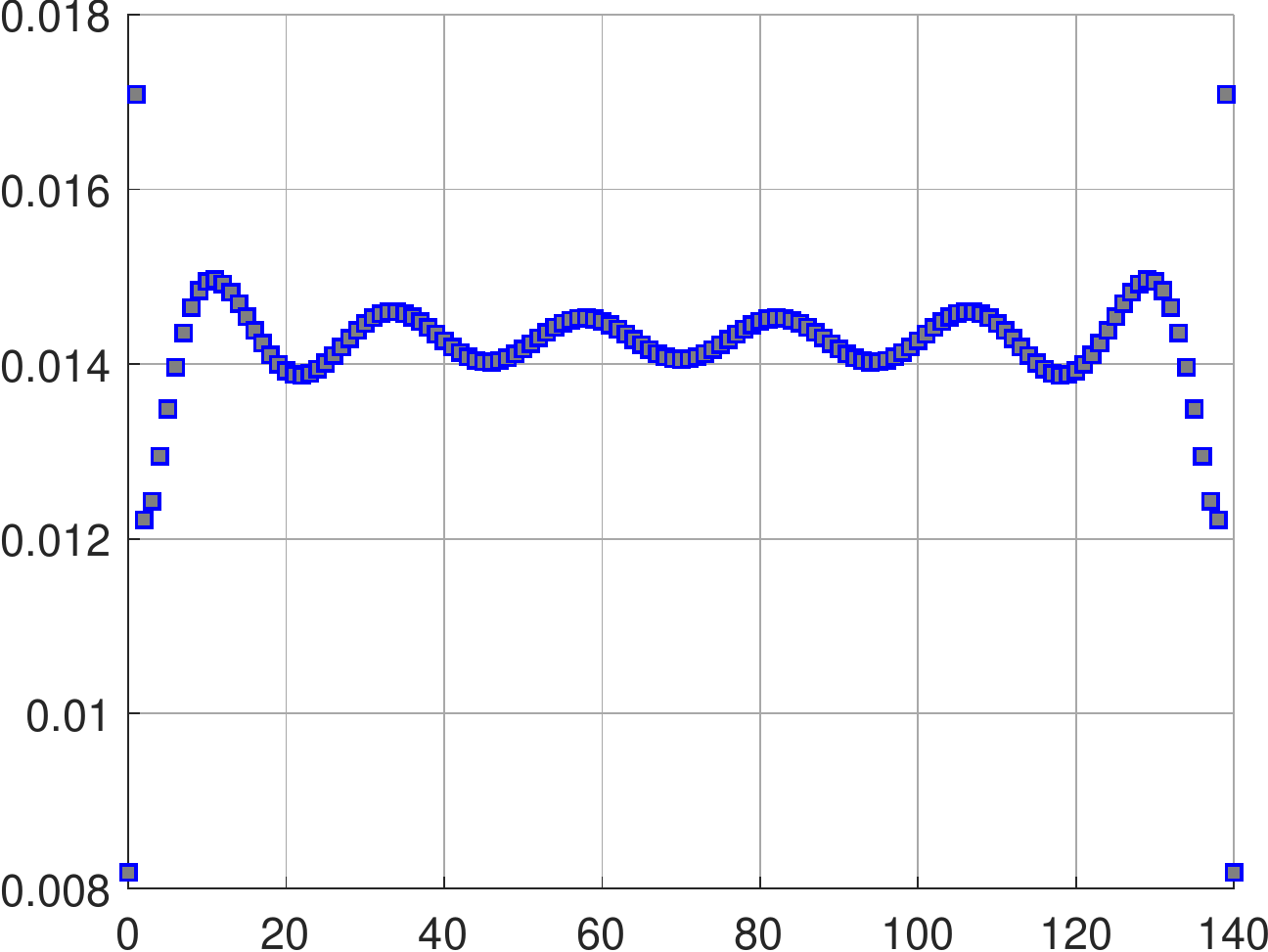}
		\caption*{$\alpha = 0.85$}
	\end{subfigure}\hfil 
	\begin{subfigure}{0.25\textwidth}
		\includegraphics[width=\linewidth]{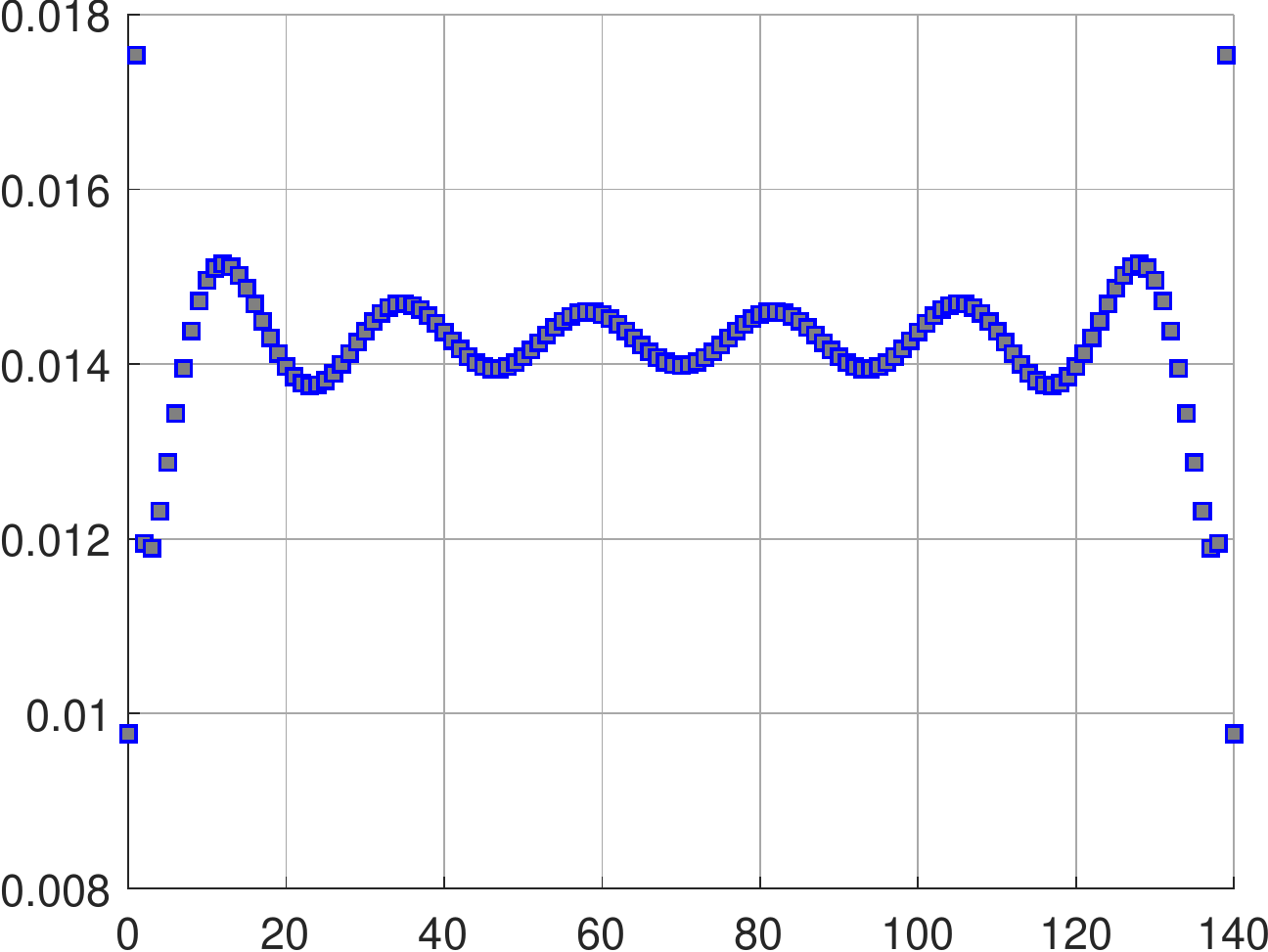}
		\caption*{$\alpha = 0.90$}
	\end{subfigure}\hfil 
	\begin{subfigure}{0.25\textwidth}
		\includegraphics[width=\linewidth]{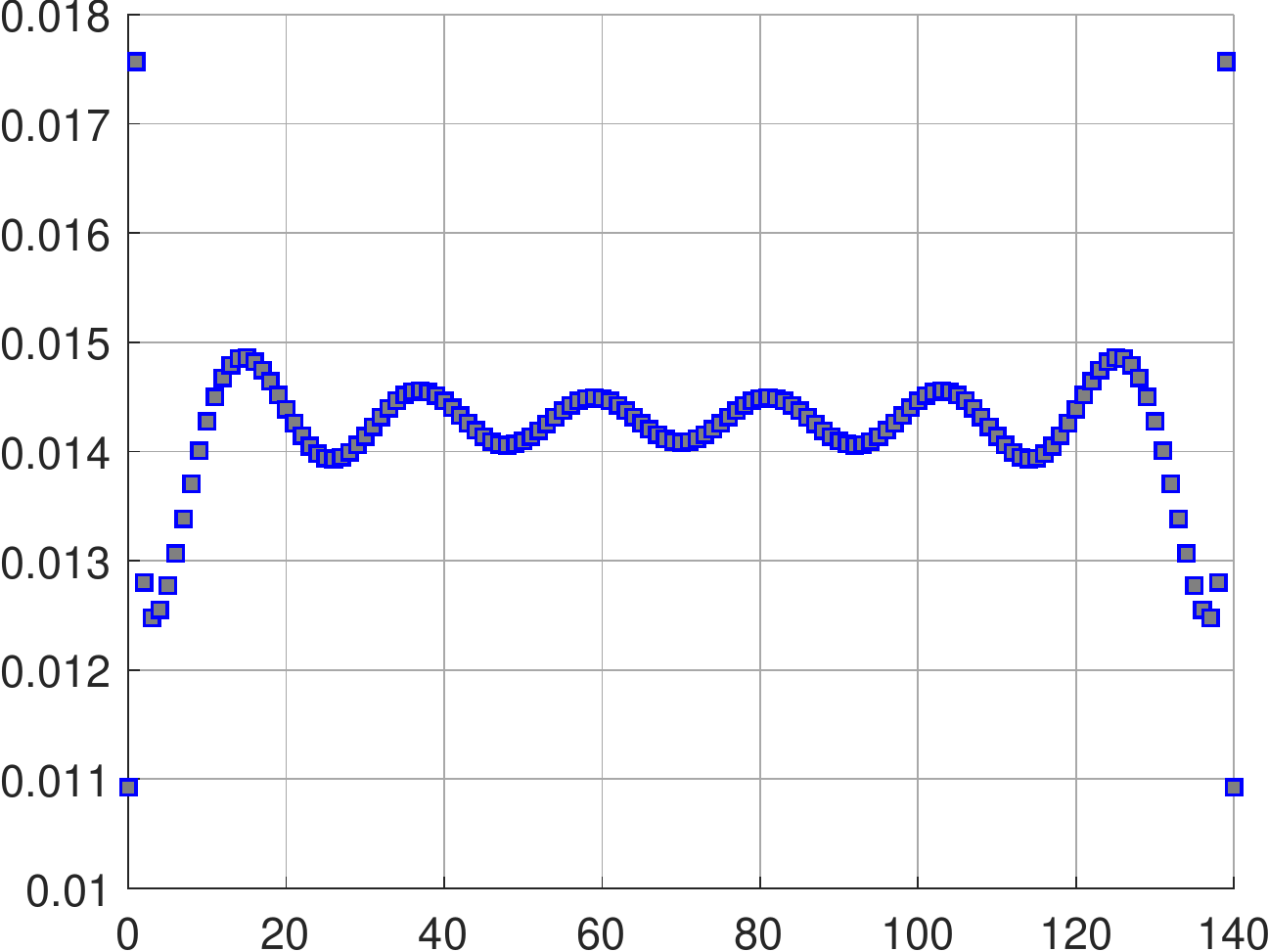}
		\caption*{$\alpha = 0.98$}
	\end{subfigure}\hfil 
	\begin{subfigure}{0.25\textwidth}
		\includegraphics[width=\linewidth]{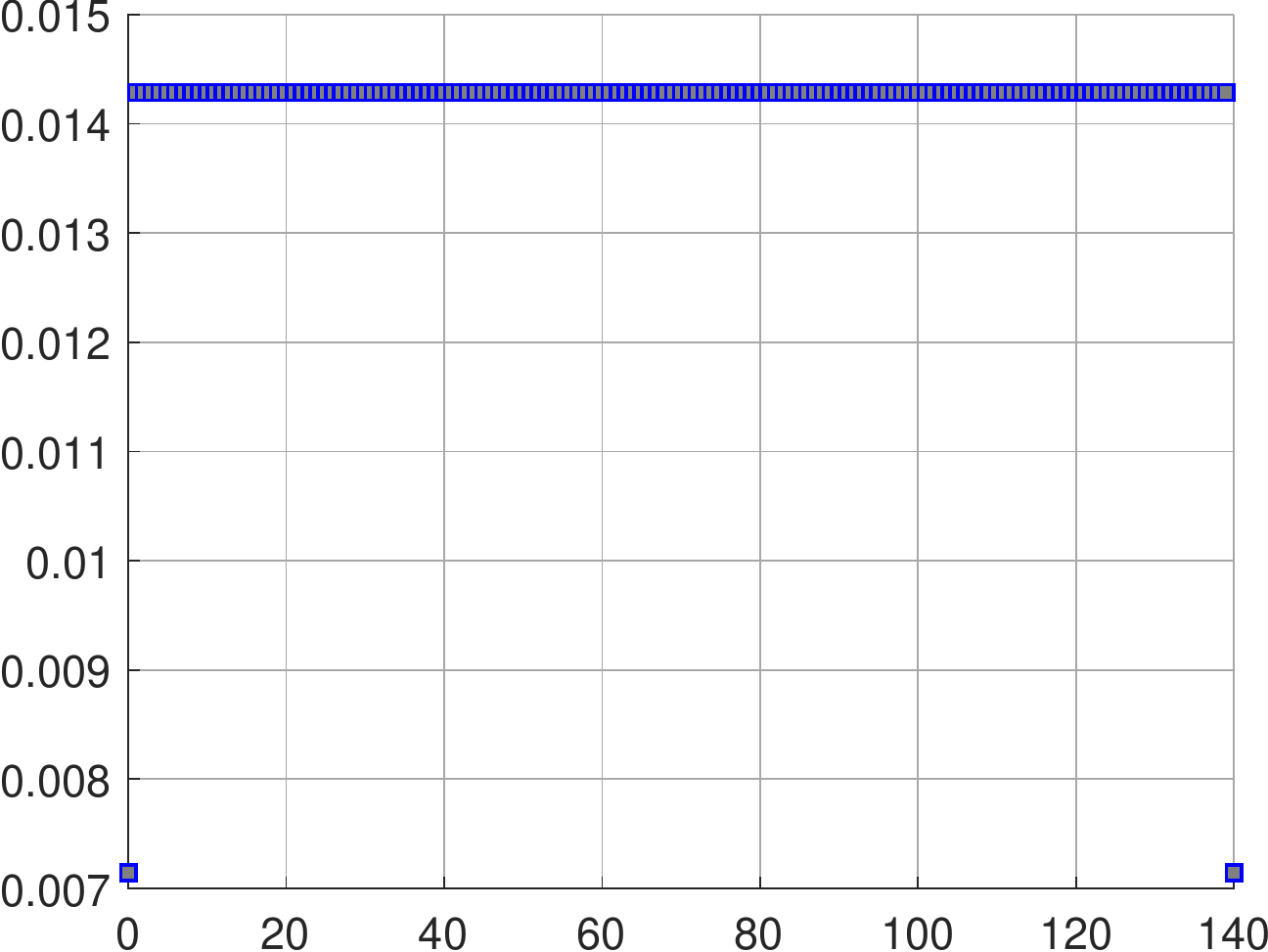}
		\caption*{$\alpha = 1$}
	\end{subfigure}\hfil 
\caption{\wee{Dependence of the KTL Quadrature weights $ \{ w_{i}^{\alpha} : i=0, ... , 140 \}$ on the parameter $\alpha$ for 141 equispaced nodes in the interval $[-1,1]$. Using $\textrm{dim}(\mathbb{P}_{12}^{\alpha})=13$, we have the ratio $m/n^2 \approx 0.97$. }}
\label{fig_pesi_140_KTL}
\end{figure}

\wee{If we compare Fig. \ref{fig_pesi_137}, Fig. \ref{fig_pesi_140old_KTL} and Fig. \ref{fig_pesi_140_KTL}, we see that the stability of the KTL quadrature in terms of the parameter $\alpha$ depends also strongly on the relation between the dimension $n+1$ of the polynomial approximation space and the number $m+1$ of grid points. While for the KTI quadrature rule with $n = m$ a choice of $\alpha$ close to $1$ is necessary to obtain positive quadrature weights, this limitation can be relaxed or dropped for the KTL scheme. In Fig. \ref{fig_pesi_140old_KTL} we observe that for a least-squares formula with ratio $n/m = 0.5$ we get positive quadrature weights (and thus stability) already for $\alpha \approx 0.9$ while smaller values still lead to negative weights. On the other hand Fig. \ref{fig_pesi_140_KTL} shows that for a ratio $m \approx n^2$ the parameter $\alpha$ has only a minor impact on the KTL weights and that in this case the quadrature rule is stable independently of the chosen $\alpha$.
In section \ref{sec:numericalexperiments}, we will further see that a fixed choice of $\alpha$ is less valuable than a dynamic strategy in which the $\alpha$ depends on the degree $n$.}

\subsection{Parameter dependent convergence of KTL quadrature}
\label{sec:parametersKTL}

Convergence properties of KTL quadrature formulas \we{can} be derived from the approximation behavior of the least-squares approximant $F_{n,\mathcal{X}}^{\alpha}(f)$. This behavior depends on the interplay of the space dimension $n$, the KT parameter $\alpha$ and the distribution of the quadrature nodes $\mathcal{X}$. As a main control parameter for the behavior of the node set $\mathcal{X}$ in the quadrature scheme we will consider the maximal distance 
\[ h = \max_{i = 0, \ldots, m+1} |x_i - x_{i-1}|\]
between two nodes in $\mathcal{X}$, where $x_{-1} = -1$ and $x_{m+1} =1$ denote the boundaries of the interval $I$. For equidistant nodes $x_i = -1 + 2i/m $, $i = 0, \ldots, m$, the distance $h$ corresponds to the spacing $h = 2/m$.   
A standard argument (see \cite[Theorem 3.2]{platte}) shows that the uniform approximation error 
$\|f - F_{n,\mathcal{X}}^{\alpha}(f)\|_{\infty}$ can be bounded by
\[\|f - F_{n,\mathcal{X}}^{\alpha}(f)\|_{\infty} \leq (1 + \mathcal{K}_{n,\mathcal{X}}^{\alpha}) E_n^{\alpha}(f),\]
where $\mathcal{K}_{n,\mathcal{X}}^{\alpha} = \sup_{\|f\|_\infty = 1} \| F_{n,\mathcal{X}}^{\alpha}(f)\|_{\infty}$ is the operator norm of the approximation operator $F_{n,\mathcal{X}}^{\alpha}$ (\wee{which we will refer to as Lebesgue constant}) and $E_n^{\alpha}(f) = \inf_{p \in \mathbb{P}_n^{\alpha}} \|f - p\|_{\infty}$ is the best approximation error in the space $\mathbb{P}_n^{\alpha}$. For the KTL quadrature formula this bound \we{immediately implies} the estimate
\[ |\mathcal{I}_{n,\mathcal{X}}^{\alpha}(f,I) - \mathcal{I}(f,I)| \leq \int_{-1}^1 |f(x) - F_{n,\mathcal{X}}^{\alpha}(f)(x)| \mathrm{d} x \leq 2 (1 + \mathcal{K}_{n,\mathcal{X}}^{\alpha}) E_n^{\alpha}(f).\]
In particular, this means that any estimates of the best approximation error $E_n^{\alpha}(f)$ and the Lebesgue constant $\mathcal{K}_{n,\mathcal{X}}^{\alpha}$ can be used directly also for the quadrature formulas studied in this work. We give a brief summary of the major statements derived in \cite{platte} and its implications for the convergence of the KTL scheme. \\

\textbf{The case $\boldsymbol{\alpha}=\boldsymbol{0}$.} 
The space $\mathbb{P}_n^0$ corresponds to the space $\mathbb{P}_n$ of algebraic polynomials of degree $n$. The term $E_n^{0}(f)$ therefore corresponds to the best approximation error in $\mathbb{P}_n$, which implies geometric convergence of $E_n(f)$ as $n \to \infty$ if the function $f$ is analytic in a Bernstein ellipse containing the interval $I$ \cite[Theorem 8.3]{trefappr}.

A sufficient condition for the parameters $n$ and $h$ to guarantee the boundedness of the Lebesgue constant $\mathcal{K}_{n,\mathcal{X}}^{0}$ is $n=\mathcal{O}(1/\sqrt{h})$. If the quadrature nodes $\mathcal{X}$ are equidistant with spacing $h = \frac{2}{m}$, this implies that $n = \mathcal{O}(\sqrt{m})$. This is a quite strong restriction on the choice of the polynomial approximation degree $n$. \we{It implies, however,} the root-exponential convergence rate
\[ |\mathcal{I}_{n,\mathcal{X}}^{\alpha}(f,I) - \mathcal{I}(f,I)| = \mathcal{O}(\rho^{-\sqrt{m}}),\]
in which $\rho>1$ denotes the index of the Bernstein ellipse. 

\textbf{The case $\boldsymbol{\alpha}=\boldsymbol{1}$.}
If the two parameters $n$ and $h$ are related as $n \leq c \frac{1}{h}$ with a proper constant $c > 0$, the Lebesgue constant $\mathcal{K}_{n,\mathcal{X}}^{1}$ is bounded. For a uniformly distributed set $\mathcal{X}$ this means that the degree $n$ can be chosen as a linear function $n = c m$ of $m$. On the other hand, the best approximation error $E_n^{1}(f)$ might decay quite slowly for $\alpha =1$, i.e., in the order of $\mathcal{O}(1/n)$ also for smooth functions $f$ (if they don't satisfy periodic boundary conditions). This will be visible also in our numerical tests and is a drawback for the choice $\alpha = 1$. 

\textbf{The case $\boldsymbol{0}<\boldsymbol{\alpha}<\boldsymbol{1}$.} For a fixed parameter $0 < \alpha < 1$, the convergence behavior of the formulas is principally the same as for $\alpha = 0$: the Lebesgue constant is bounded if $n$ and $h$ satisfy a relation of the form $n=\mathcal{O}(1/\sqrt{h})$ as $h \longrightarrow 0$. Furthermore, also the best approximation error $E_{n}^{\alpha}(f)$ decays geometrically if the function is analytic in a neighborhood of $I$, implying that $\mathcal{K}_{n,\mathcal{X}}^{\alpha} E_{n}^{\alpha}(f) = \mathcal{O}(\rho^{-\sqrt{m}})$ for equidistant nodes $\mathcal{X}$ and a proper $\rho > 1$. In \wee{\cite{platte2011}}, it was shown that the root-exponential rate $\mathcal{O}(\rho^{-\sqrt{m}})$, $\rho > 1$, \we{is best} possible for a stable algorithm approximating an analytic function on equidistant nodes.

\textbf{The case $\boldsymbol{\alpha_n}=\boldsymbol{4/\pi \arctan(\epsilon^{1/n})}$.}
An asymptotic analysis given in \cite{platte} shows that the choice $n=\mathcal{O}(1/h)$ for $h \longrightarrow 0$ is sufficient for the boundedness of the Lebesgue constant (similarly as for $\alpha = 1$). On the other hand, compared to $\alpha = 1$ a smaller approximation error $E_n^{\alpha_n}(f)$ can be expected till a small error tolerance $\epsilon > 0$ is reached (this is also visible numerically). However, geometric convergence towards $0$ as in the case $\alpha < 1$ can no longer be guaranteed.

\section{Numerical experiments} \label{sec:numericalexperiments}

In this section, we provide some numerical experiments that investigate the convergence and stability properties of the KTL and KTI quadrature formulas in more detail. 
All the code of the following experiments is publicly available at the GitHub page of this work
\begin{center}
    \url{https://github.com/GiacomoCappellazzo/KTL_quadrature} .\\
\end{center}

\noindent We consider the three test functions 

$$f_1 (x) = \frac{1}{1+100x^2}, \quad f_2 (x) = \frac{1}{1+16\sin^2(7x)}, \quad\text{and} \quad
f_3 (x) = \sqrt{1.01 + x}$$
and their respective integrals $\mathcal{I}(f_k,I)$, $k \in \{1,2,3\}$, over the interval $I = [-1,1]$.

In the following experiments we plot the relative errors obtained by comparing the KTL quadrature formula with the exact value of the integral:
$$
\mathcal{E}_{\mathrm{rel}}= \Bigg| \frac{\mathcal{I}_{n,\mathcal{X}}^{\alpha}(f,I) - \mathcal{I}(f,I)}{\mathcal{I}(f,I)} \Bigg|. 
$$
For the calculation of the exact integral $\mathcal{I}(f,I)$ with a high precision, we used the {\tt Matlab} command \texttt{integral(f,-1,1)}. All three test functions are analytic in an open neighborhood of $[- 1,1]$. The function $f_{1}$ has poles close to the interval $[-1,1] $ in the complex plane, $f_{2}$ is an highly oscillatory \we{entire} function and $f_{3} $ has a singularity close to $x=-1$.

\subsection{KTI formulas for a fixed parameter $\alpha$}
In the following graphs we show the results obtained by Algorithm \ref{alg:2} implementing the interpolatory KTI quadrature formula through mapped nodes with the Kosloff Tal-Ezer map and equidistant nodes $\mathcal{X}$. In particular, we have $m = n$ and use a fixed mapping parameter $0 \leq \alpha \leq 1$. From the discussion in Section \ref{sub_sec_limit_alpha} we know that if the mapping parameter $\alpha=1$ the quadrature weights correspond to those of the composed trapezoidal rule. In Fig. \ref{fig:4}, the blue, magenta and black curves display the relative quadrature errors using the KTI quadrature rule with $m+1$ equidistant nodes and with parameters $\alpha=1$, $\alpha=0.99$, and $\alpha=0.98$, respectively. 

\begin{figure}[H] 
	\begin{subfigure}{0.33\textwidth}
		\includegraphics[width=\linewidth]{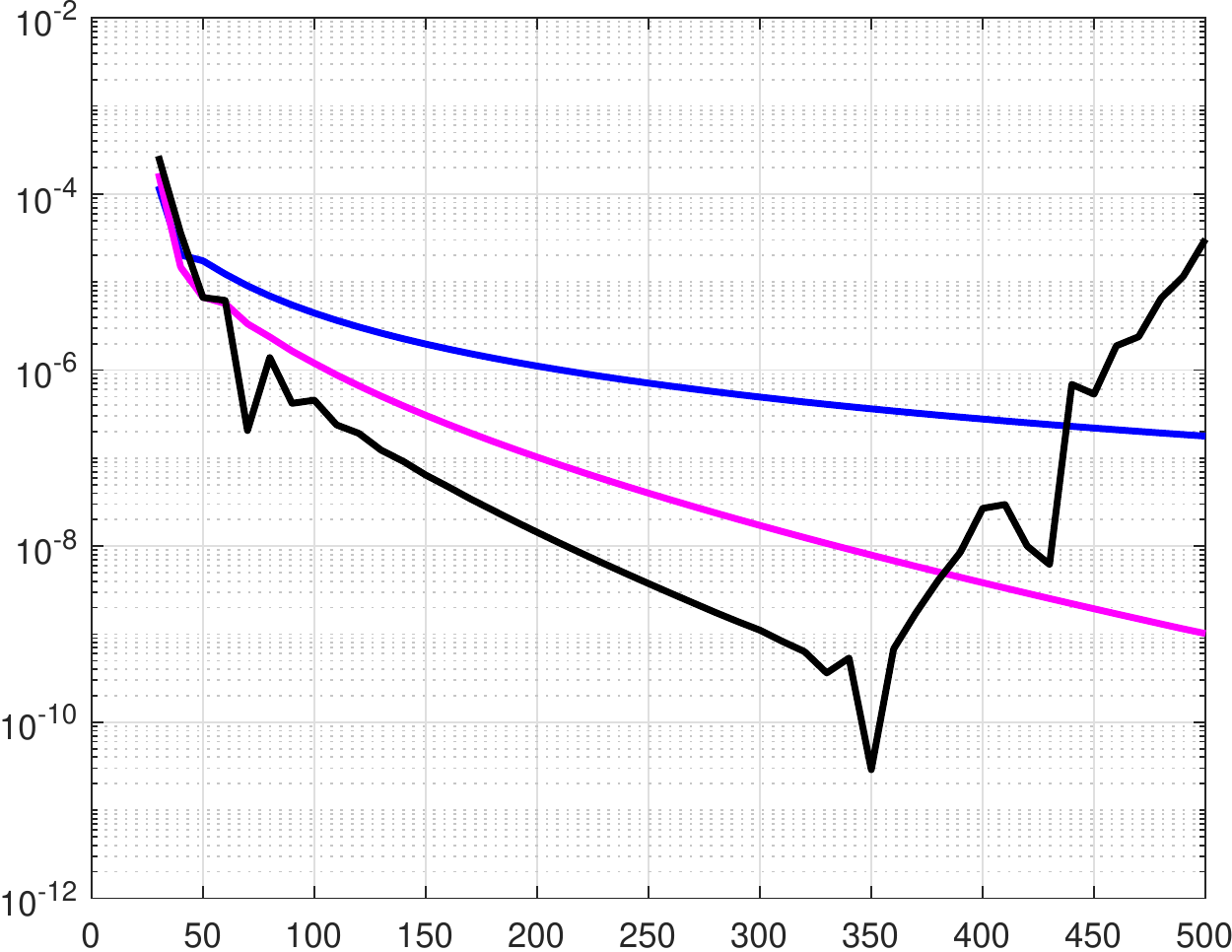}
		\caption*{$f_{1}=\frac{1}{1+100x^{2}}$}
	\end{subfigure}\hfil 
	\begin{subfigure}{0.33\textwidth}
		\includegraphics[width=\linewidth]{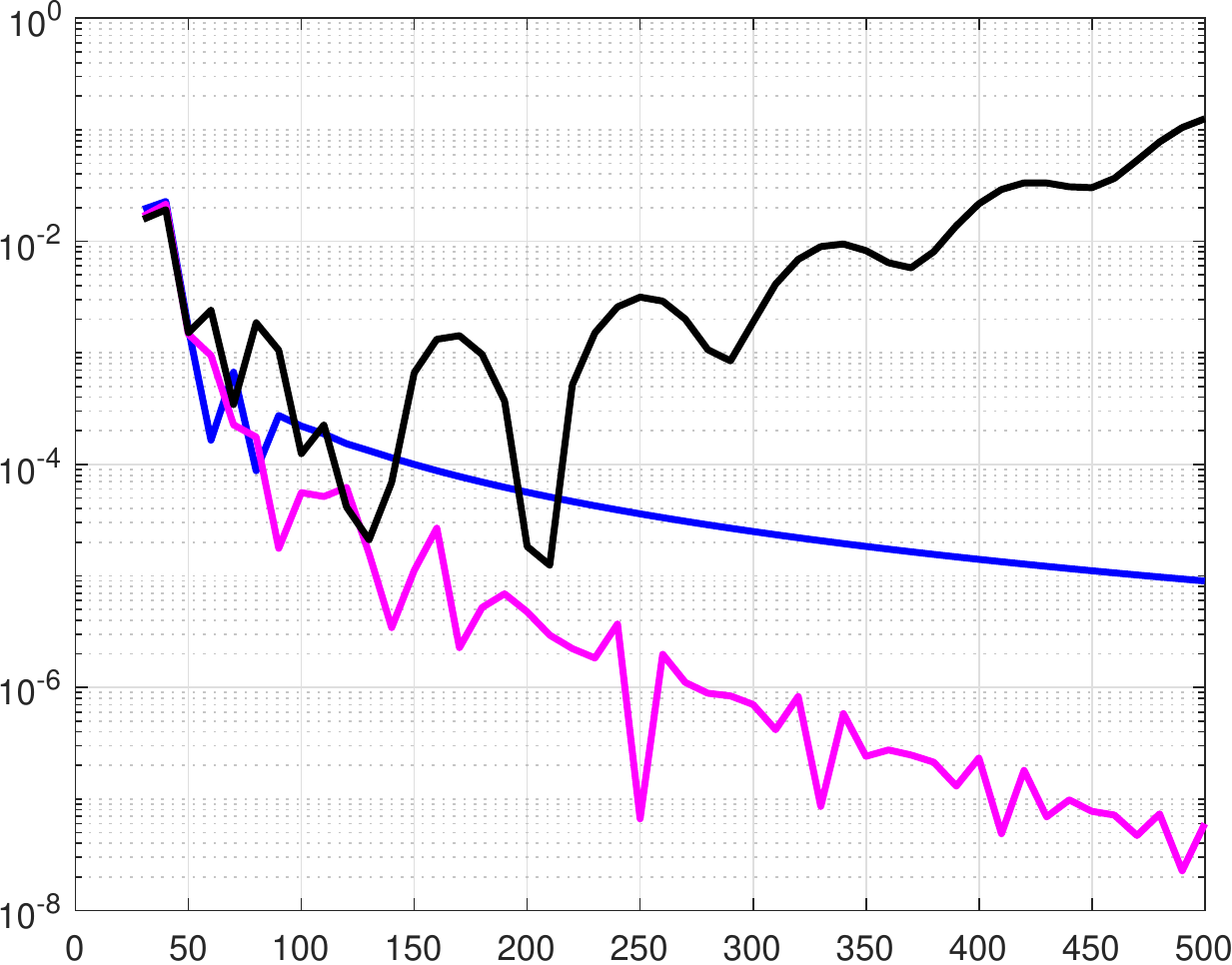}
		\caption*{$f_{2}=\frac{1}{1+16(\sin(7x)^{2})}$}
	\end{subfigure}\hfil 
	\begin{subfigure}{0.33\textwidth}
		\includegraphics[width=\linewidth]{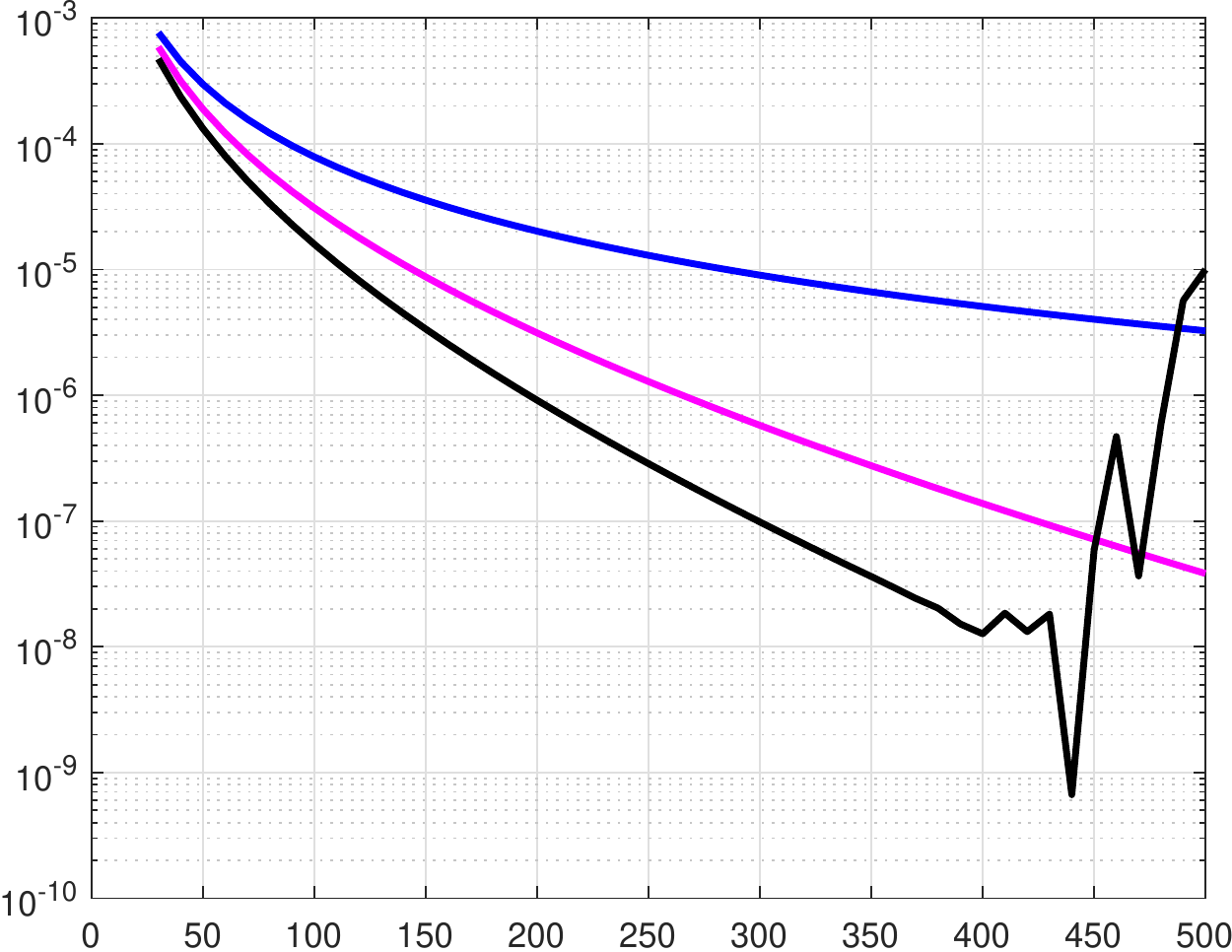}
		\caption*{$f_{3}=\sqrt{1.01+x}$}
	\end{subfigure}\hfil 
\captionsetup{justification=centering}
\caption{Relative quadrature error $\mathcal{E}_{\mathrm{rel}}$ for KTI quadrature using the parameters \\ \textcolor{blue}{\textbullet} $\alpha=1$, \textcolor{magenta}{\textbullet} $\alpha=0.99$, and \textcolor{black}{\textbullet} $\alpha=0.98$ in the Kosloff Tal-Ezer map.}
\label{fig:4}
\end{figure}
From a theoretical point of view (Section \ref{sec:parametersKTL} or \cite[Theorem 3.3]{platte}) we expect an algebraic convergence rate of index 1 ($\mathcal{O}(n^{-1})$) for $\alpha=1$, while for $0 \leq \alpha < 1$ the best approximation error decays geometrically ($\mathcal{O}(\rho^{-n})$) while the Lebesgue constant is not necessarily bounded. The numerical tests show that by lowering the value $\alpha$ the interpolation quadrature potentially improves but becomes unstable if $\alpha$ is not close to $1$ and the degree $n$ gets larger. This phenomenon can be explained by the behavior of the KTI quadrature weights displayed in Section \ref{sub_sec_limit_alpha}. Namely, the KTI weights get highly oscillatory and include negative values as soon as $\alpha$ is too far away from $1$ and the degree $n$ is large.  

\subsection{KTL quadrature: $\alpha$ increasing with the number of nodes}

In the previous numerical experiment we have seen that if the ratio between the number of nodes and the degree of the interpolation polynomial is $1$ phenomena of \we{ill-conditioning} occur. To avoid these instabilities, we go over to the KTL quadrature formula and approximate the integral using the formula $\mathcal{I}_{n,\mathcal{X}}^{\alpha}(f,I)$ calculated in Algorithm \ref{alg:1} and discussed in Section \ref{sec:KTLquadrature}.

In Fig. \ref{fig:5}, the $x$-axis displays the number $m$ (giving $m+1$ equidistant nodes) used to determine the KTL approximation of the integral while the $y$-axis shows the corresponding relative quadrature error. The blue curve describes the relative error of the composed trapezoidal rule ($\alpha = 1$), the magenta and black curve correspond to the errors obtained for the mapping parameter $\alpha=0.9$ and $\alpha=0.7$, respectively. To guarantee the boundedness of the Lebesgue constant it is sufficient that $n=\mathcal{O}(\sqrt {m})$, as shown in \cite[Corollary 5.2]{platte} (see also Section \ref{sec:parametersKTL}). In this numerical test we chose $n=4\sqrt{m}$.

\begin{figure}[H]
	\begin{subfigure}{0.33\textwidth}
		\includegraphics[width=\linewidth]{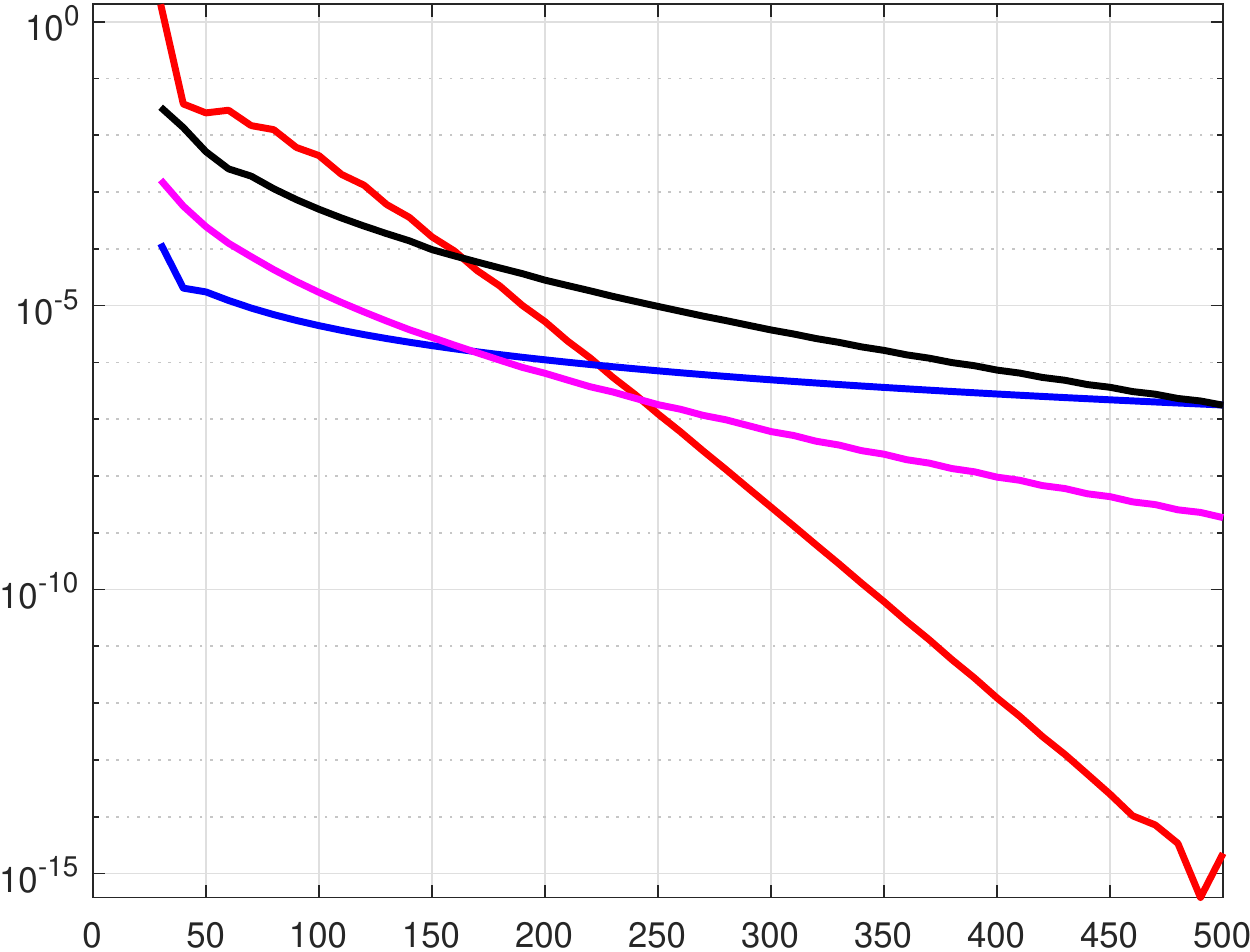}
		\caption*{$f_{1}=\frac{1}{1+100x^{2}}$}
	\end{subfigure}\hfil 
	\begin{subfigure}{0.33\textwidth}
		\includegraphics[width=\linewidth]{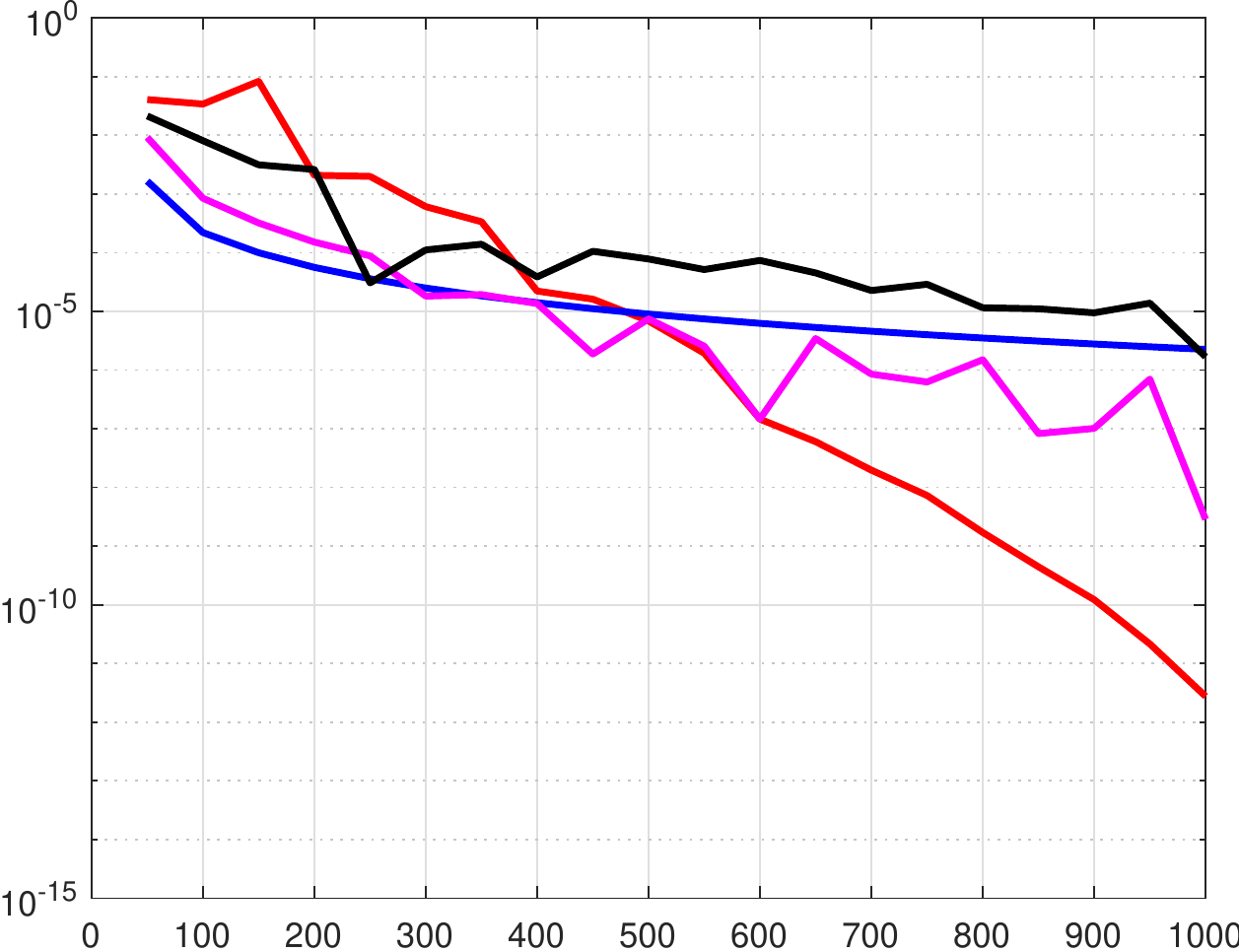}
		\caption*{$f_{2}=\frac{1}{1+16(\sin(7x)^{2})}$}
	\end{subfigure}\hfil 
	\begin{subfigure}{0.33\textwidth}
		\includegraphics[width=\linewidth]{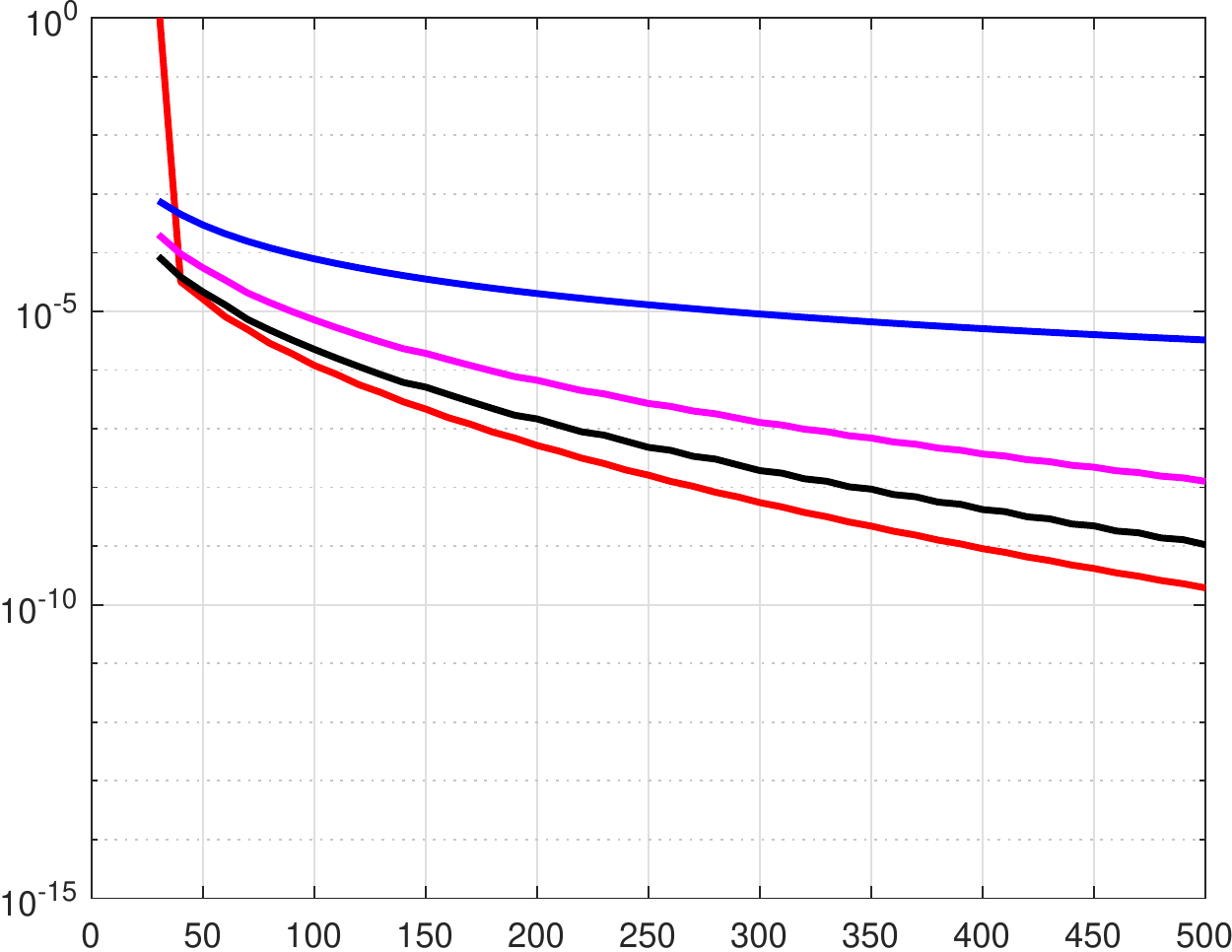}
		\caption*{$f_{3}=\sqrt{1.01+x}$}
	\end{subfigure}\hfil 
	\captionsetup{justification=centering}
	\caption{Relative quadrature error $\mathcal{E}_{\mathrm{rel}}$ for KTL quadrature using the parameters \\ \textcolor{blue}{\textbullet} $\alpha=1 , n=m$; \textcolor{magenta}{\textbullet} $\alpha=0.9 , n=4\sqrt{m}$; \textcolor{black}{\textbullet} $\alpha=0.7 , n=4\sqrt{m}$; \\ \textcolor{red}{\textbullet} $\alpha_n=1- \frac{2|\log(10^{-12})|}{n \pi} , n=\frac{1}{2}m$. } \label{fig:5}
\end{figure}
As before we expect for $\alpha = 1$ an algebraic convergence rate of index 1 ($\mathcal{O}(m^{-1})$), while for $0 \leq \alpha < 1$ the convergence rate is root exponential ($\mathcal{O}(\rho^{-\sqrt{m}})$) in the number $m$. The numerical experiment shows that by altering $\alpha$ the convergence rate of the KTL scheme slightly improves and remains stable even if $\alpha$ is not close to $1$. From Fig. \ref{fig:5} we do not notice an evident relationship between the displayed curves and a particular choice of the parameter $\alpha$. This indicates that a smart choice of the parameter is more appropriate for \wee{KTL schemes in which there is a linear relation between $n$ and $m$ and where $\alpha$ depends on the degree of the polynomial and/or the number of nodes}. As introduced in \cite[Theorem 3.3]{platte} (see also the exposition in Section \ref{sec:parametersKTL}) we choose
$$
\alpha_{n} = 1- \frac{2|\log(\epsilon)|}{n \pi} .
$$
In our experiments, we take $\epsilon=10^{-12}$ and $m=2n$. This choice of $\alpha_n$ corresponds to the red curve in Fig. \ref{fig:5}. We can report that the results significantly improve compared to the KTL quadrature with a fixed parameter $\alpha$. We also observe that the method remains stable for different values of $\alpha$ even if the number of nodes increases: the approximation of the integral of the function $f_{2} $ does not reach the machine precision in $500$ nodes, but if we increase $m$ further the approximation of the integral improves and the KTL method shows no signs of instability.

\subsection{KTL quadrature on non-equidistant nodes}

In the numerical tests presented so far equidistant nodes have been used as quadrature nodes. The KTL quadrature \we{rule, however, also works} for \we{quasi-uniform} distributions of the nodes \we{without any change in the numerical scheme as described in Algorithm \ref{alg:1}}. In the next numerical experiment the quadrature nodes are defined as follows:
$$
x_{i} = \delta_{i} + \Big(-1 + \frac{2i}{m}\Big), \, i= 0, \ldots , m,
$$
where $\delta_{i}$ is a uniform random variable in $]-1/m, 1/m[$ for $ i = 1,...,m-1$, $ \delta_{0}$ is a uniform random variable in $]0,1/m[$ and $\delta_{m}$ is a uniform random variable in $]-1/m,0[$. \we{This defines a set of quasi-equispaced points in the interval $[-1,1]$.} \we{The respective mapped nodes are not Chebyshev or Chebyshev-Lobatto nodes but 
their distribution is concentrated towards the boundaries of the interval when $\alpha$ is close to $1$.} \we{This feature of the distribution of the mapped nodes is a common property of well-conditioned polynomial interpolation schemes \cite[Chapter 5]{trefspect}.} 

\begin{figure}[H]
	\begin{subfigure}{0.33\textwidth}
		\includegraphics[width=\linewidth]{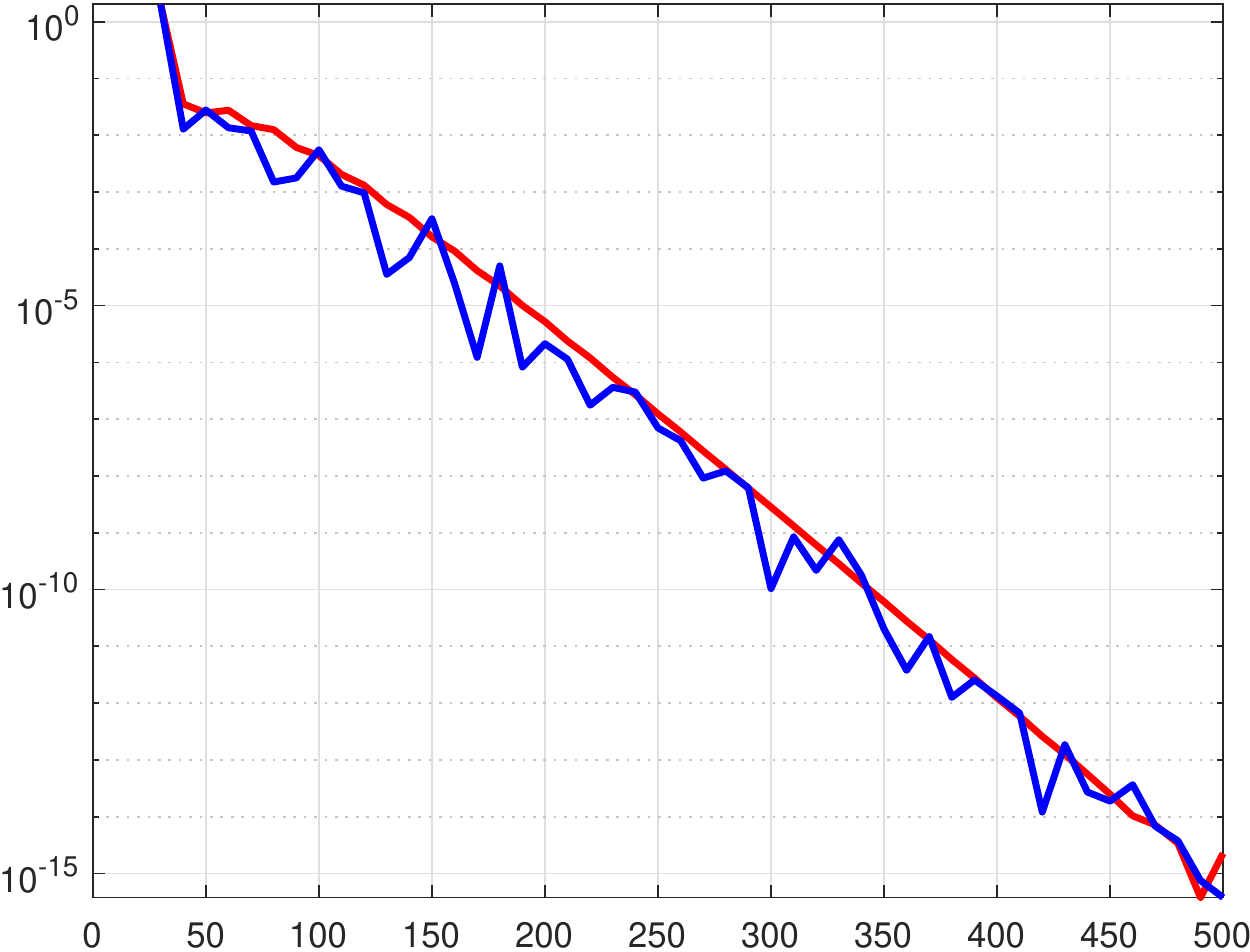}
		\caption*{$f_{1}=\frac{1}{1+100x^{2}}$}
	\end{subfigure}\hfil 
	\begin{subfigure}{0.33\textwidth}
		\includegraphics[width=\linewidth]{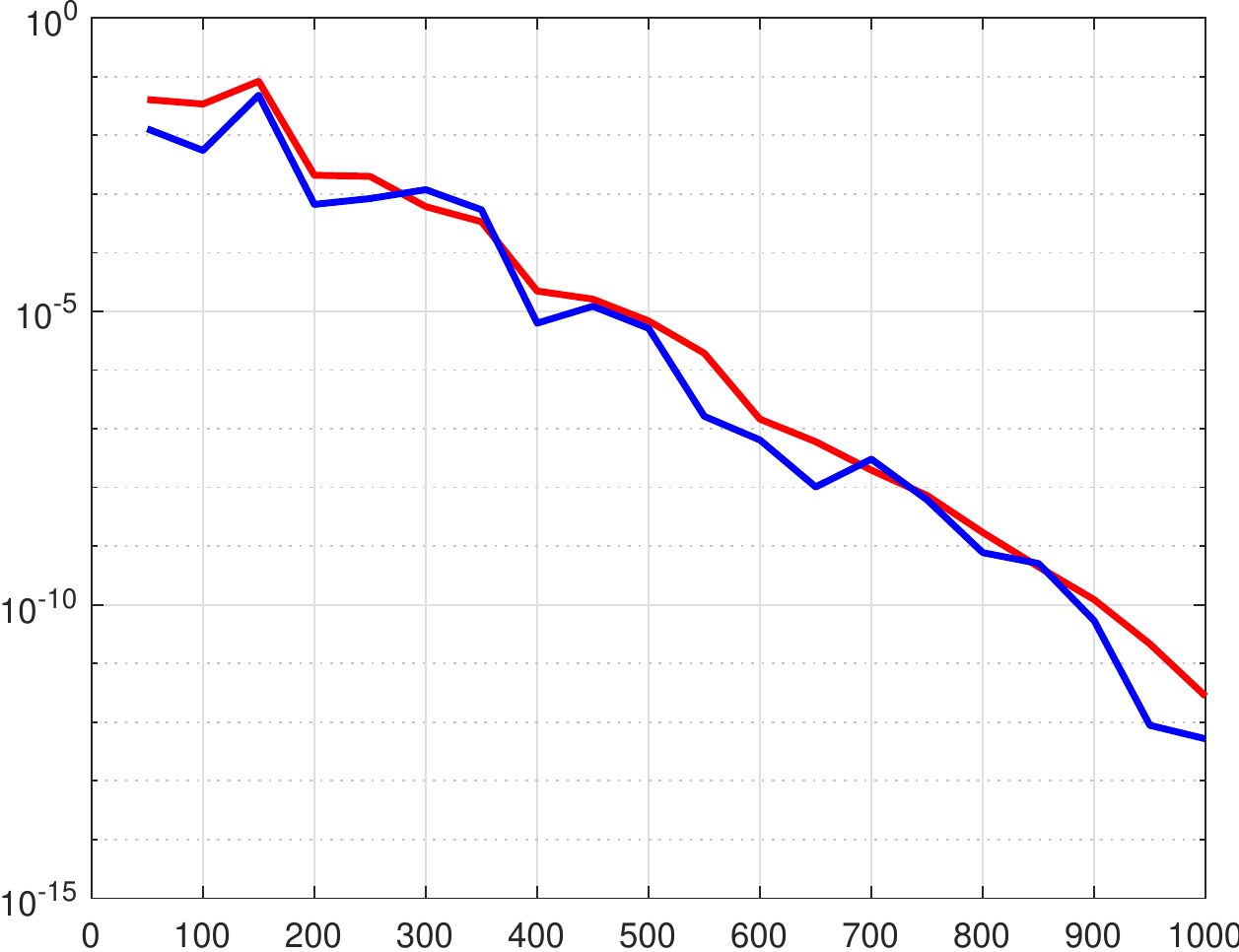}
		\caption*{$f_{2}=\frac{1}{1+16(\sin(7x)^{2})}$}
	\end{subfigure}\hfil 
	\begin{subfigure}{0.33\textwidth}
		\includegraphics[width=\linewidth]{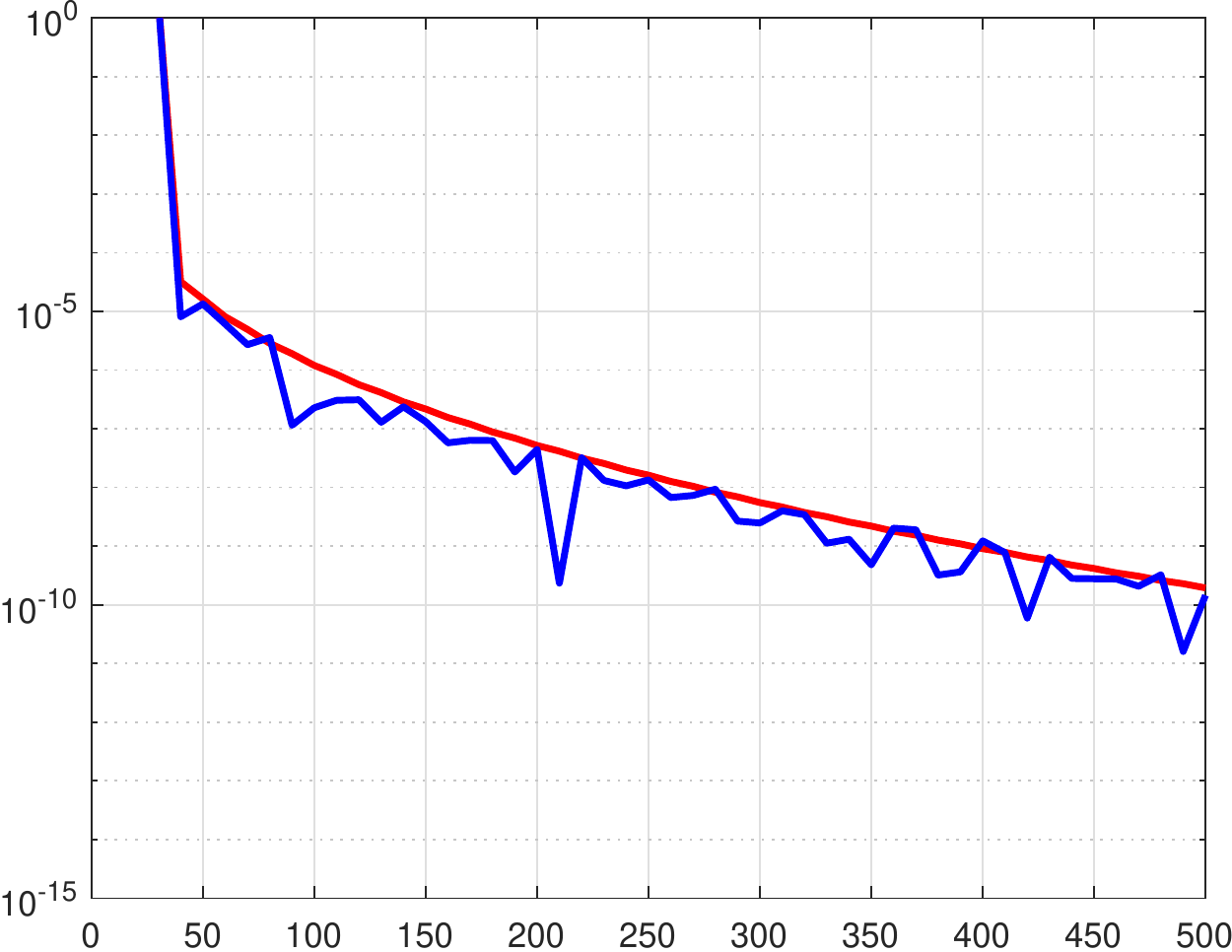}
		\caption*{$f_{3}=\sqrt{1.01+x}$}
	\end{subfigure}\hfil 
		\captionsetup{justification=centering}
		\caption{KTL quadrature for equidistant and perturbed nodes with the parameters \\ \textcolor{red}{\textbullet} $\alpha=1- \frac{2|\log(10^{-12})|}{n \pi} , n=\frac{1}{2}m$, equidistant nodes;  \\ \textcolor{blue}{\textbullet} $\alpha=1- \frac{2|\log(10^{-12})|}{n \pi} , n=\frac{1}{2}m$, perturbed nodes. }
		\label{fig:6}
\end{figure}
\we{For the proposed numerical scheme the distribution of the nodes plays only a minor role for the convergence as long as quasi-uniformity is given. We highlight this with a second numerical test using a low-discrepancy sequence of nodes such as the Halton points. Note that, when leaving the quasi-uniform setting and considering arbitrary grids, it is possible to construct unfavourable node distributions such that no convergence of the quadrature rule is achieved.}
\begin{figure}[H]
	\begin{subfigure}{0.33\textwidth}
		\includegraphics[width=\linewidth]{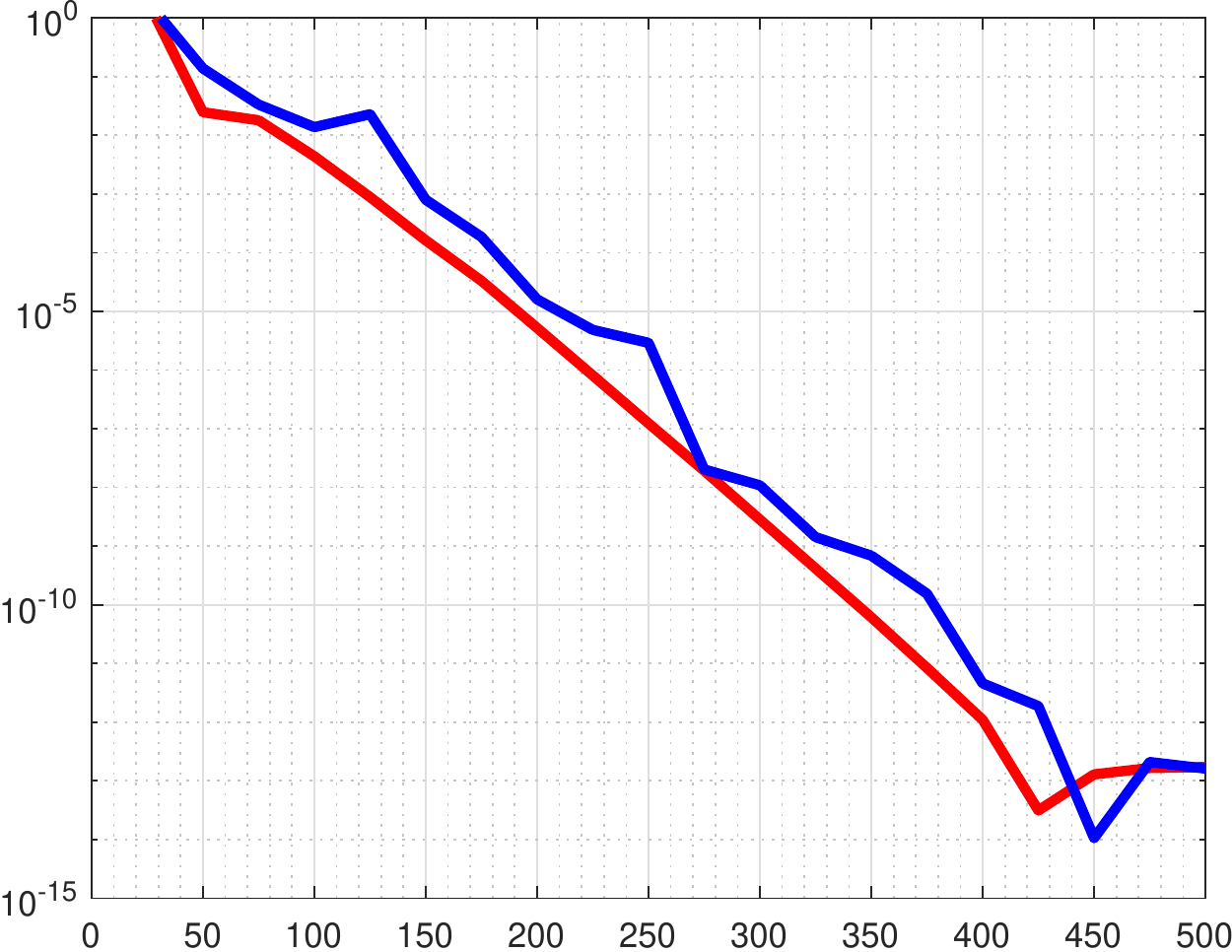}
		\caption*{$f_{1}=\frac{1}{1+100x^{2}}$}
	\end{subfigure}\hfil 
	\begin{subfigure}{0.33\textwidth}
		\includegraphics[width=\linewidth]{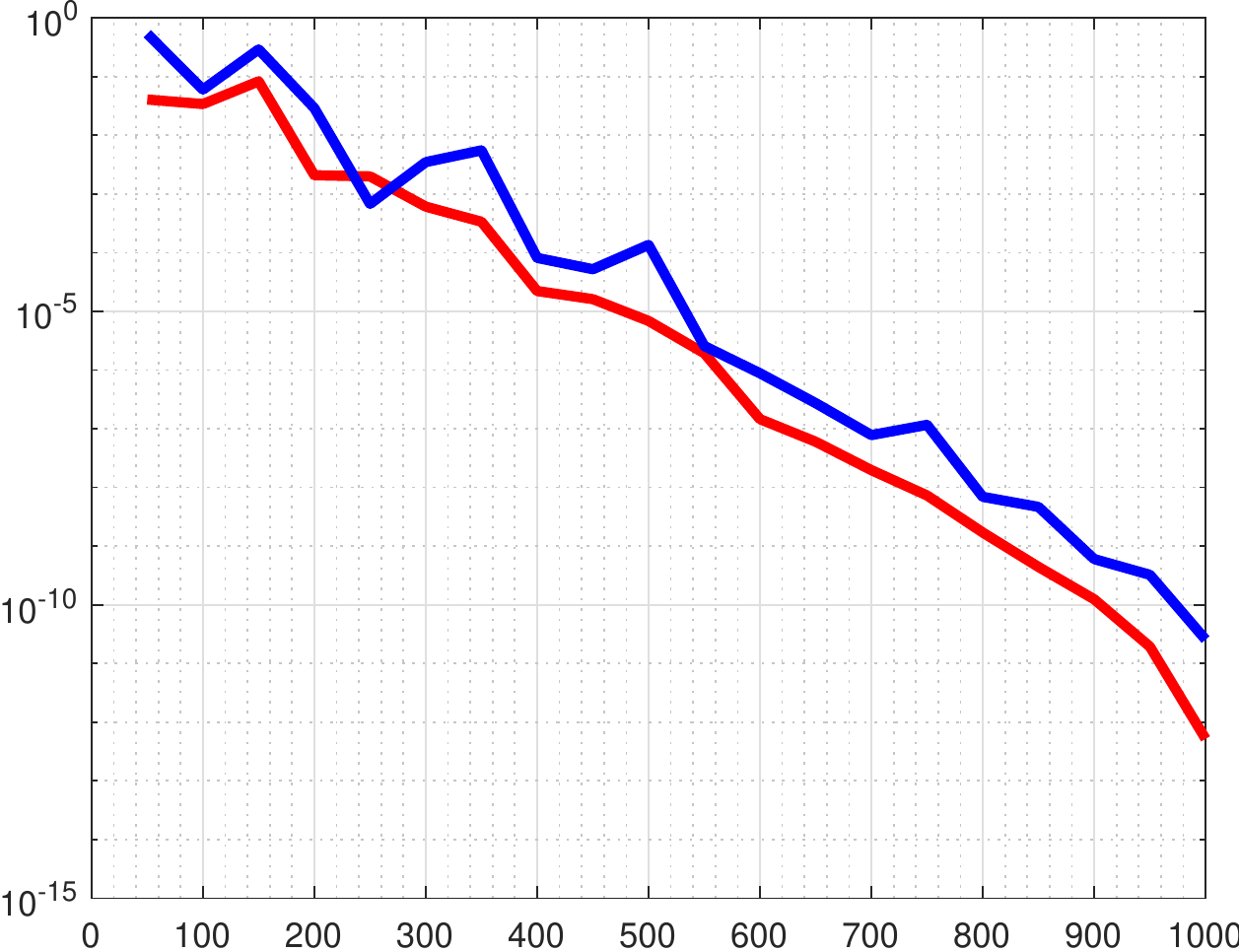}
		\caption*{$f_{2}=\frac{1}{1+16(\sin(7x)^{2})}$}
	\end{subfigure}\hfil 
	\begin{subfigure}{0.33\textwidth}
		\includegraphics[width=\linewidth]{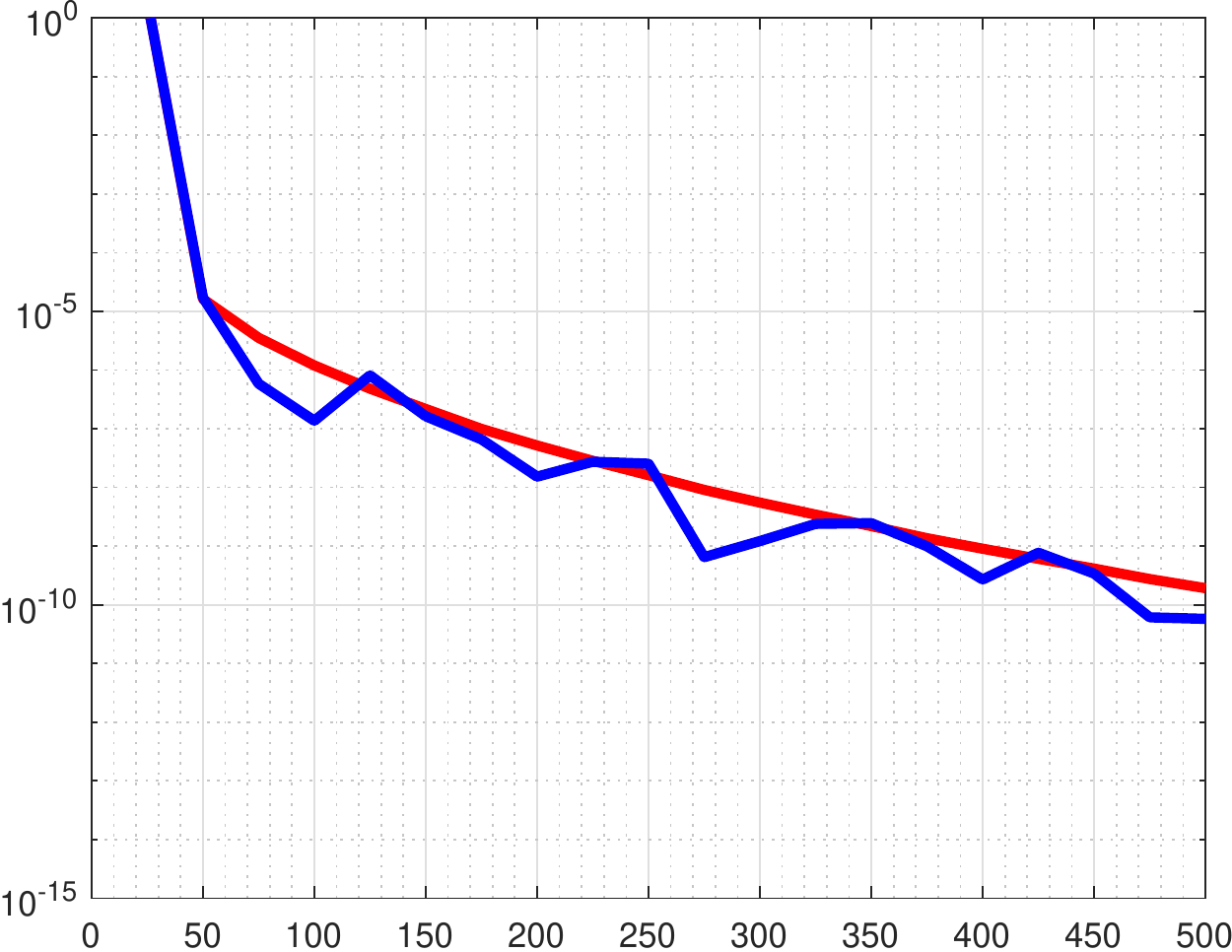}
		\caption*{$f_{3}=\sqrt{1.01+x}$}
	\end{subfigure}\hfil 
		\captionsetup{justification=centering}
		\caption{\we{KTL quadrature for equidistant and Halton points with the parameters \\} \textcolor{red}{\textbullet} $\alpha=1- \frac{2|\log(10^{-12})|}{n \pi} , n=\frac{1}{2}m$, equidistant nodes;  \\ \textcolor{blue}{\textbullet} $\alpha=1- \frac{2|\log(10^{-12})|}{n \pi} , n=\frac{1}{2}m$, Halton points. }
		\label{fig:7}
\end{figure}

The red curve in Fig. \ref{fig:6} and in Fig. \ref{fig:7} describes the relative error for equidistant nodes while the blue curve represents the error for the perturbed nodes and \we{Halton points, respectively}. We can conclude that the convergence rates for the perturbed and the equidistant nodes are approximately the same. 

\section{Conclusions}
We briefly summarize the most important points of our discussion. In order to get \we{well-conditioned} and quickly converging quadrature formulas at \we{quasi-uniform} grids of an interval, we improved classical interpolatory quadrature formulas using the following two strategies: (i) we included an auxiliary mapping that maps the quadrature nodes onto a new more suitable set of fake nodes. On these fake nodes the interpolatory quadrature is applied using the function values from the original set of nodes; (ii) we reduced the degree of the polynomial spaces with respect to the number of quadrature nodes leading to a least-squares quadrature formula instead of an interpolatory one. While the first strategy (i) alone \we{already yields}  an improvement with respect to a direct interpolatory formula, fast convergence for the integration of smooth functions is not guaranteed. Moreover, if one particular map is fixed also instabilities can occur if the the number of quadrature nodes gets large. In order to get both, fast convergence and stability, it turned out that the inclusion of the least-squares idea (ii) is necessary and that a smart choice of the mapping parameters is essential. We analyzed such quadrature strategies particularly for the Kosloff Tal-Ezer map and equidistant nodes. We derived several properties of the corresponding quadrature weights, including symmetries, limit relations and convergence properties depending on the central parameters of the scheme. We also showed how the KTL quadrature weights can be calculated efficiently by using a Chebyshev basis and a fast cosine transform for the computations. Our final numerical experiments confirm that the described parameter selections yield KTL quadrature formulas that converge quickly for smooth functions. 

\section*{Acknowledgements}
\wee{We thank the anonymous referees for their valuable feedback which improved the quality of the manuscript considerably.} This research has been accomplished within the Rete ITaliana di Approssimazione (RITA) and the thematic group on Approximation Theory and Applications of the Italian Mathematical Union. We received the support of GNCS-IN$\delta$AM. FM is funded by the ASI - INAF grant  \lq\lq Artificial Intelligence for the analysis of solar FLARES data (AI-FLARES)\rq\rq.

\we{ \section*{Declarations}
The authors have no conflicts of interest to declare that are relevant to the content of this article.}

\end{document}